\newtheorem{thm}{Theorem}[section]
\newtheorem{pro}[thm]{Proposition}
\newtheorem{lem}[thm]{Lemma}
\begin{document}
\title{Graph 4-braid groups and Massey products}
\author{Ki Hyoung Ko}
\author{Joon Hyun La}
\author{Hyo Won Park}
\address{Department of Mathematics, Korea Advanced Institute of Science and Technology, Daejeon, 307-701, Korea}
\email{\{knot, mineid1024, h.w.park\}@kaist.ac.kr}
\begin{abstract}
We first show that the braid group over a graph topologically containing no $\Theta$-shape subgraph has a presentation related only by commutators. Then using discrete Morse theory and triple Massey products, we prove that a graph topologically contains none of four prescribed graphs if and only if its 4-braid groups is a right-angled Artin group.
\end{abstract}

\maketitle

\section{Introduction}

A graph is a connected 1-dimensional finite CW-complex in this article. An unordered $n$-tuple $\{\sigma_1,\ldots, \sigma_n\}$ on a graph $\Gamma$ forms an $i$-cube if $i$ of them are edges and the rest are vertices in $\Gamma$. A cube $\{\sigma_1,\ldots, \sigma_n\}$ is {\em off-diagonal} if $\bar\sigma_i\cap\bar\sigma_j=\emptyset$ for all $i \neq j$. All off-diagonal cubes naturally form an $n$-dimensional cube complex $UD_n \Gamma$ called {\em the unordered discrete configuration space} of $\Gamma$.

The unordered topological configuration space of $n$ objects on $\Gamma$ deformation retracts to the unordered discrete configuration space $UD_n \Gamma$ if $\Gamma$ is sufficiently subdivided, more precisely, if each path between two vertices of degree $\neq 2$ passes through at least $n-1$ edges
and each loop at a vertex passes through at least $n+1$ edges \cite{Ab,KKP}.
Under this circumstance, {\em the graph $n$-braid group} $B_n \Gamma$ of $\Gamma$ is the fundamental group $\pi_1 (UD_n \Gamma)$ of the unordered discrete configuration space of $\Gamma$. Since $UD_n \Gamma$ is locally CAT(0) \cite{Ab}, $B_n \Gamma$ is a CAT(0) group.

Besides the properties as a CAT(0) group, the graph braid group $B_n \Gamma$ has other distinctive characteristics depending on the graph $\Gamma$ and the braid index $n$.
In \cite{KP}, the abelianization of $B_n \Gamma$ was completely determined via a
natural decomposition of the underlying graph $\Gamma$. In fact, it is torsion-free for a planar graph and it has 2-torsions for a non-planar graph. If $\Gamma$ is planar then $B_n \Gamma$ has a minimal presentation. That is, $B_n \Gamma$ has a presentation
with $\beta_1$ generators and $B_2 \Gamma$ has a presentation with $\beta_1$ generators and $\beta_2$ relators where $\beta_i$ denotes the $i$-th Betti number of $B_n \Gamma$.
The minimal presentation for $B_n \Gamma$ is automatically {\em commutator-related}, i.e. all relators are words of commutators and the minimal presentation for $B_2 \Gamma$ is automatically {\em simple-commutator-related}, i.e. all relators are commutators.
In the same paper~\cite{KP}, it was shown that if $B_n \Gamma$ is simple-commutator-related for $n\ge 3$, $\Gamma$ does not topologically contain the subgraph of two vertices with four multiple edges between them and the converse was conjectured. In this paper we prove the following weaker version of the conjecture. We say that a graph $\Gamma$ {\em contains} another graph $\Gamma'$ if a subdivision of $\Gamma'$ is a subgraph of a subdivision of $\Gamma$.

\begin{thm}\label{thm:SCRG}
If $\Gamma$ does not contain $N_1$ in Figure~\ref{fig:4nuclei} then $B_n\Gamma$ is simple-commutator-related.
\end{thm}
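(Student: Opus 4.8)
The plan is to start from the presentation of $B_n\Gamma$ furnished by Farley--Sabalka's discrete Morse theory on $UD_n\Gamma$ and to prove that, under the hypothesis, every relator in it is a single commutator. Both the hypothesis and the conclusion are unaffected by subdivision, so I may assume $\Gamma$ is subdivided enough that $\pi_1(UD_n\Gamma)=B_n\Gamma$, and I fix a spanning tree $T\subset\Gamma$, a planar embedding of $T$, and a leaf of $T$ as base vertex; this orders the vertices and edges of $T$ and determines the Farley--Sabalka discrete gradient vector field on $UD_n\Gamma$. The resulting Morse presentation has one generator for each critical $1$-cell and one relator for each critical $2$-cell, the relator being read off by pushing the boundary square of the $2$-cell down the gradient flow onto critical $1$-cells. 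Thus the theorem reduces to the combinatorial claim: \emph{if $\Gamma$ contains no $N_1$, then the reduced boundary word of every critical $2$-cell is conjugate to a commutator $[g,h]$ of two generators, or is trivial.}

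I would then set up the local picture of a critical $2$-cell $c=\{d_1,d_2\}\cup W$, where $d_1,d_2$ are disrespectful edges of $\Gamma$ and $W$ is a set of $n-2$ blocked vertices. The boundary of $c$ is a square whose four $1$-cell faces are obtained by replacing $d_1$, resp.\ $d_2$, by one of its two endpoints, so the relator has the shape $w_1\,w_2\,w_3^{-1}\,w_4^{-1}$ (up to cyclic conjugation), where $w_1,w_3$ are the words to which the two faces still containing $d_1$ flow and $w_2,w_4$ those of the two faces still containing $d_2$. A direct run of the gradient reduction shows that \emph{if no collision occurs} --- i.e.\ if neither the token that crosses $d_1$ nor the one that crosses $d_2$ ever has to move past and be re-routed around another token (one in $W$, or the one sitting at an endpoint of the other edge) --- then $w_1=w_3=g_1$ and $w_2=w_4=g_2$ are single generators and the relator is exactly the commutator $[g_1,g_2]$. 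So the whole content of the theorem is concentrated in determining when a collision can arise during the reduction.

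The heart of the argument, and the step I expect to be the main obstacle, is to prove that a collision of this kind forces $\Gamma$ to contain $N_1$, i.e.\ two vertices joined by four internally disjoint arcs. I would trace the offending gradient path: a collision requires a branch vertex $v$ of $\Gamma$ at which the moving token enters along the arc it has just traversed, cannot descend toward the base along its tree-geodesic because a second token blocks it, and is therefore diverted along a third edge at $v$; following the flow until the two tokens finally pass one another, and then following the tree-geodesics that carry both tokens to their sorted positions, one produces four pairwise internally disjoint arcs between two fixed vertices of $\Gamma$ --- the count being, in essence, Euler-characteristic/pigeonhole bookkeeping on the edges used by the blocking token, the moving token and the two return geodesics, so that after collapsing valence-two vertices one sees a copy of $N_1$. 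This has to be run through in each configuration of $d_1$ and $d_2$ (their relative position in $\Gamma$, which of their endpoints, which directions at $v$), and one must separately handle critical $2$-cells involving an edge of $\Gamma\setminus T$, where the flow may additionally push a token across a deleted edge. Taking the contrapositive in every case: if $\Gamma$ contains no $N_1$, every critical $2$-cell contributes a single commutator, and hence $B_n\Gamma$ is simple-commutator-related.
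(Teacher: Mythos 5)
Your reduction of the theorem to the claim that \emph{every} critical $2$-cell of $UD_n\Gamma$ has reduced boundary word (conjugate to) a commutator is where the argument breaks: that claim is false for $N_1$-free graphs, so the ``collision forces $N_1$'' dichotomy you plan to prove cannot hold. Already for a cactus graph containing a single cycle with two vertices of degree $\ge 3$ (a ``candy'', as in $N_2$ or $N_3$), take a critical $2$-cell $A_k(\vec a)\cup B_\ell(\vec b)$ in which $A_k$ is the deleted edge of that cycle and $B$ is the other essential vertex on it (the case $A\wedge B=A<B\wedge\iota(A_k)=B$ of Lemma~\ref{rel1}). Pushing the boundary square down the gradient flow, the two faces that retain the edge $B_\ell$ do \emph{not} reduce to the same word: the token released at $\iota(A_k)$ ends up blocked at $B$, so one face reduces to (a conjugate of) $B_\ell(\vec b)$ and the other to (a conjugate of) $B_\ell(\vec b+\vec\delta_1)$, two \emph{different} generators. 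The resulting relator involves three distinct critical $1$-cells plus conjugating words and is genuinely not a commutator --- the paper states this explicitly at the end of the proof of Lemma~\ref{rel1}. So no contrapositive ``collision $\Rightarrow N_1$'' bookkeeping can succeed: the collision happens, $N_1$ is absent, and the Morse presentation itself is not simple-commutator-related.

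What is actually needed, and what the paper does, is a further Tietze transformation after the Morse-theoretic computation. The problematic $2$-cells are split into $\mathcal S_0$ (those with $\vec a=\vec 0$) and $\mathcal S_4$; the relators from $\mathcal S_0$ are solved for the ``target'' generators $B_\ell(\vec b)\in\mathcal T$, which are then eliminated and substituted into the remaining relators. Lemma~\ref{lem:comm} --- whose proof rests on the inductive identity of Lemma~\ref{lem:AB} relating the words $\mathbf A(\cdot)$, $\mathbf B(\cdot)$ and $\mathbf{(B,A)}(\cdot)$ --- shows that after this substitution every $\mathcal S_4$ relator collapses to a commutator, yielding the simple-commutator-related presentation of Lemma~\ref{scrg}. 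Your outline contains no analogue of this elimination step, and it is the essential content of the theorem; the cases you do treat (no re-routing, giving $[g_1,g_2]$ on the nose) correspond only to the easy cases (1)--(3) of Lemma~\ref{rel1}, where in fact the commutator entries are conjugated generators rather than single generators, which is harmless for the conclusion but another sign that the flow is more intricate than your no-collision picture assumes.
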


A right-angled Artin group that has a presentation related by commutators among generators is obviously simple-commutator-related. A question that has been frequently asked since graph braid groups was pioneered by Ghrist and Abrams \cite{AR} is when they are right-angled Artin groups. There have been satisfactory answers for higher braid indices. Farley and Sabalka showed in~\cite{FS2} that a tree $T$ does not contain $N_4$ in Figure~\ref{fig:4nuclei} iff $B_nT$ is right-angled Artin group for $n\ge4$. Kim, Ko and Park  showed in~\cite{KKP} that a graph $\Gamma$ does not contain $N_4$ nor $S$ in Figure~\ref{fig:4nuclei} iff $B_n\Gamma$ is a right-angled Artin group for $n\ge5$.

\begin{figure}[ht]
\centering
\includegraphics[height=2.5cm]{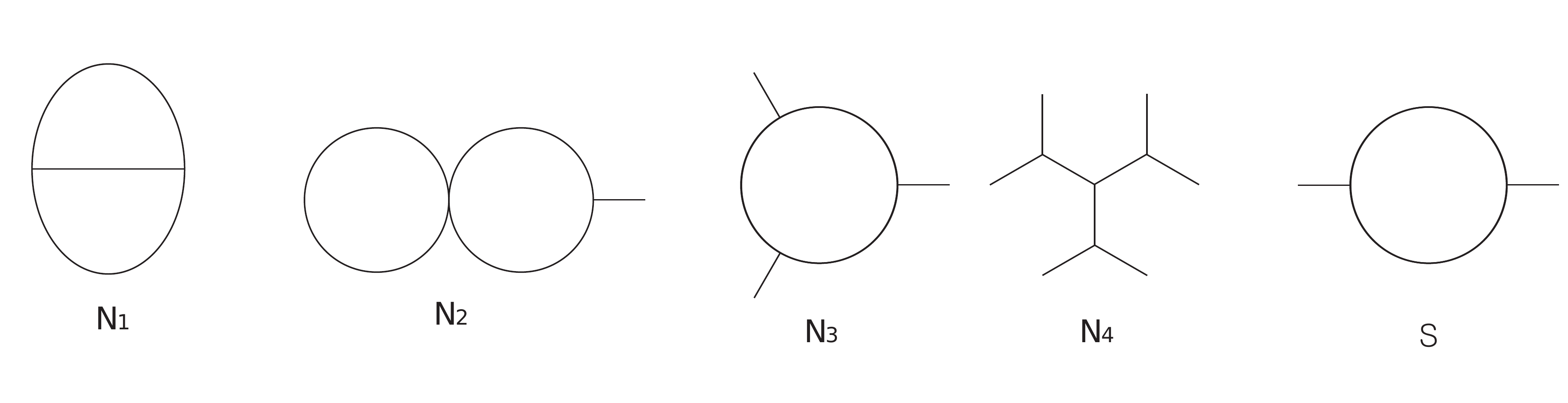}
\caption{$n$-nuclei for $n\ge 4$}
\label{fig:4nuclei}
\end{figure}

A graph $\Gamma$ is an {\em $n$-nuclei} if $B_n \Gamma$ is not a right-angled Artin group and $B_n \Gamma'$ is a right-angled Artin group for every proper subgraph $\Gamma'$ of $\Gamma$ after ignoring vertices of valence 2. It is a reasonable conjecture that a graph $\Gamma$ contains no $n$-nuclei iff $B_n\Gamma$ is a right-angled Artin group. The main result of this article is to verify the conjecture for $n=4$ via the following two theorems

\begin{thm}\label{thm:if}
If $\Gamma$ contains none of $N_1, N_2, N_3, N_4$ in Figure~\ref{fig:4nuclei} then $B_4 \Gamma$ is a right-angled Artin group.
\end{thm}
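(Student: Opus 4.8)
The plan is to construct an explicit presentation of $B_4\Gamma$ via Farley--Sabalka discrete Morse theory and then to verify that, under the hypothesis, this presentation is --- after Tietze transformations --- a right-angled Artin presentation. Subdividing $\Gamma$ as much as is needed for braid index $4$, fix a spanning tree $T$, a planar embedding of $\Gamma$, and the induced linear orders on vertices and edges. The associated gradient field on the cube complex $UD_4\Gamma$ has a unique critical $0$-cell, a set $S$ of critical $1$-cells, and a set $C$ of critical $2$-cells; the resulting Morse presentation of $B_4\Gamma$ has generating set $S$ and one relator for each cell of $C$, obtained by flowing the boundary of that $2$-cell down the gradient. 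Since $\Gamma$ contains no $N_1$, Theorem~\ref{thm:SCRG} already guarantees that every such relator is, up to conjugacy, a single commutator $[u,v]$ of words in $S$. What remains is to promote ``commutator of words'' to ``commutator of generators'' in a globally consistent way, which for tree braid groups is exactly the step at which Farley and Sabalka invoke the absence of $N_4$.

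The next step is to exploit the rigidity of the hypothesis. Avoiding $N_1$ caps the multiplicity of any parallel band of $\Gamma$ at three; avoiding $N_4$ restricts the tree part of $\Gamma$ to ``near-spiders'' (essentially one branch vertex per tree component once valence-$2$ vertices are suppressed); and avoiding $N_2$ and $N_3$ kills the remaining cyclic and theta-type configurations. After also refining the graph decomposition of \cite{KP} to a decomposition of $B_4\Gamma$ along the cut vertices of $\Gamma$ --- building $B_4\Gamma$ from the $k$-braid groups ($k\le 4$) of simpler pieces by operations (free products and amalgamations over the free abelian subgroups carried by the cut vertices) that preserve right-angled Artin-ness --- one is reduced to a short list of ``cores'': spiders, single cycles carrying a controlled amount of pendant-tree data, and a handful of small theta-type graphs.

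For each core I would run the Morse machine and inspect the relators. For spiders this reproduces the Farley--Sabalka analysis: avoiding $N_4$ forces every critical $2$-cell to contribute a commutator of two distinct generators, so the Morse presentation is literally $\langle S\mid\{[s,t]:\{s,t\}\in E\}\rangle$ for an explicit graph $E$. For the cyclic and theta-type cores the new feature is ``long-range'' critical $2$-cells whose relators are commutators $[u,v]$ with $u$ and $v$ supported on disjoint arcs of a cycle; there I would straighten the relators arc by arc, changing generators so that each such word becomes a single generator, and then check that the substitutions forced on overlapping arcs are mutually compatible. The absence of $N_2$ and $N_3$ is precisely what makes these local changes of basis cohere globally: it rules out the configuration of two long-range relators demanding incompatible substitutions, which is the algebraic shadow of the nontrivial triple Massey product that the converse half of the main theorem detects in $B_4 N_i$.

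The hard part is exactly this last compatibility check: showing that all the Tietze transformations needed to straighten the long-range relators of the cyclic and theta-type cores can be performed simultaneously, so that the Morse presentation is not merely commutator-related (which Theorem~\ref{thm:SCRG} already gives) but genuinely defines a right-angled Artin group. Concretely, whenever two critical $2$-cells share a generator one must normalize the words in both relators by a single change of generating set, and the hypothesis enters by excluding exactly those subgraphs whose $4$-braid groups obstruct such a normalization. Once this is done, the normalized presentation is by construction a right-angled Artin presentation, and its defining graph is read off from which pairs of new generators co-occur in a relator.
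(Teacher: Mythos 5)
Your outline follows the same general road map as the paper (Morse presentation, then Tietze transformations), but two of its load-bearing steps are not justified, and one of them is where all the actual work lies. First, the proposed reduction to ``cores'': you claim $B_4\Gamma$ can be built from braid groups of simpler pieces by free products and amalgamations over free abelian subgroups carried by cut vertices, with these operations preserving right-angled Artin-ness. Neither half of this is established. The discrete configuration space of a graph with a cut vertex does not split so simply (the four particles distribute across the pieces in all possible ways, and the pieces interact through the cut vertex in every such stratum), and the class of right-angled Artin groups is not closed under amalgamation over free abelian subgroups, so even granting a decomposition you could not conclude RAAG-ness of the whole from RAAG-ness of the cores. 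The paper avoids any such decomposition: it uses the global structural fact that a graph avoiding all four nuclei is a \emph{linear concatenation} of star-bouquets and candies (property (T5)), and works with the full Morse presentation of $B_4\Gamma$ at once, with the explicit relator forms of Lemma~\ref{rel3a}.

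Second, you correctly identify the compatibility of the changes of generators as ``the hard part,'' but you only assert that avoiding $N_2$ and $N_3$ makes the substitutions cohere; no mechanism is given. In the paper this is precisely the content of the second half of \S\ref{ss42:raag}: one must first discard the relators containing targets (Lemma~\ref{RAAGback} and Lemma~\ref{raag} show they are consequences of the remaining ones), then introduce the specific families $\mathcal H_1,\dots,\mathcal H_4$ of critical $1$-cells to be replaced, write down the candidate right-angled Artin group $G$ and the explicit homomorphisms $\varphi:G\to B_4\Gamma$ and $\psi:B_4\Gamma\to G$, and verify well-definedness case by case against the relators of Lemma~\ref{rel3a} --- a verification that repeatedly uses the braid index being $4$ (e.g.\ at most two successive target replacements, the bounds $|\vec a|\le 3$, $|\vec b|\le 2$) and the candy/star-bouquet structure. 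Your proposal contains no substitute for this: the ``arc by arc straightening'' and the claim that incompatible substitutions are ``the algebraic shadow of the nontrivial triple Massey product'' are heuristics, not arguments. As it stands the proposal is a plausible plan whose decisive steps remain unproved, so it does not constitute a proof of Theorem~\ref{thm:if}.
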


For the converse, it is enough to prove the following theorem since it was already shown in \cite{KKP} that $B_4\Gamma$ is not a right-angled Artin group for a graph $\Gamma$ containing $N_1$.

\begin{thm}\label{thm:onlyif}
Let $\Gamma$ be a graph that does not contain $N_1$. If $\Gamma$ contains $N_2$, $N_3$ or $N_4$ then $B_4 \Gamma$ is not a right-angled Artin group.
\end{thm}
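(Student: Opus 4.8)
The plan is to certify that $B_4\Gamma$ is not a right-angled Artin group by exhibiting in $H^*(B_4\Gamma)$ a triple Massey product $\langle a,b,c\rangle$ of one-dimensional classes that is defined (i.e. $ab=bc=0$) but does not contain $0$; this is an obstruction because right-angled Artin groups are formal, so that every defined triple Massey product in $H^*(A_\Delta)$ contains $0$. This is the obstruction Farley and Sabalka used for trees in \cite{FS2}, and I would follow its spirit. Two reductions come first. If $\Gamma$ is non-planar then $H_1(B_4\Gamma)$ has $2$-torsion by \cite{KP}, whereas right-angled Artin groups have torsion-free integral homology; so we may assume $\Gamma$ is planar. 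Since $\Gamma$ does not contain $N_1$, Theorem~\ref{thm:SCRG} provides a simple-commutator-related presentation of $B_4\Gamma$, so $H^1(B_4\Gamma)$ is free and the cup product $H^1\otimes H^1\to H^2$ can be read off from that presentation --- exactly what is needed to control the indeterminacy of $\langle a,b,c\rangle$. It then suffices, for each $N\in\{N_2,N_3,N_4\}$, to treat a planar $\Gamma$ that contains $N$ but not $N_1$.

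To locate the Massey product I would use the Farley--Sabalka discrete Morse function on $UD_4\Gamma$, set up so that a sufficiently subdivided copy $\widehat N$ of $N$ contributes a controlled set of critical cells, together with the cup product transferred to the Morse cochain complex from the cubical structure. For each of the three nuclei I would single out three critical $1$-cocycles $a,b,c$ supported on chosen edges of $\widehat N$ --- informally, moves of one particle along three edges of $N$ with the remaining three particles parked at blocking vertices --- check that $ab=0$ and $bc=0$ in $H^2$, pick Morse $1$-cochains $u,v$ with $\delta u=a\smile b$ and $\delta v=b\smile c$ at the cochain level, and form the Massey cocycle $\mu=a\smile v\pm u\smile c$. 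To see that $0\notin\langle a,b,c\rangle$ I would produce an explicit critical $2$-cycle $z$, again supported near $\widehat N$, with $\langle z,\mu\rangle\neq 0$, and verify that $z$ annihilates every element of $a\smile H^1(B_4\Gamma)+H^1(B_4\Gamma)\smile c$, using the explicit description of $H^1$ and of cup products from the simple-commutator-related presentation. Each nucleus is a separate but finite computation carried out on an explicit subdivided model, and, since $B_4$ is unchanged by further subdivision, this settles all subdivisions at once.

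The step I expect to be the main obstacle is \emph{locality}: showing that the obstruction found inside $N$ survives in $B_4\Gamma$ for an \emph{arbitrary} $\Gamma$ containing $N$. When $\Gamma$ is obtained from $\widehat N$ by attaching trees this is painless, since then $\Gamma$ deformation retracts onto $\widehat N$, so $B_4\widehat N$ is a retract of $B_4\Gamma$ and naturality of Massey products transports the nonvanishing product. In general, though, the complement of $\widehat N$ in $\Gamma$ may carry cycles and cannot be retracted away, and there is no help from the mere $\pi_1$-injectivity of the subgraph inclusion; instead one must argue directly in the Morse complex of $UD_4\Gamma$ that $a,b,c,u,v$ and the detecting cycle $z$ can be chosen so that the Morse differentials and cup products coupling $\widehat N$-cells to the remaining critical cells neither alter the class of $\mu$ nor create a summand of $a\smile H^1+H^1\smile c$ detected by $z$. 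Making this uniform over all three nuclei and all admissible subdivisions is where the real work lies; the rest is the lengthy but essentially routine bookkeeping of the Morse-theoretic cup and Massey computations and of the indeterminacy coset.
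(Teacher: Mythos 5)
Your proposal identifies the same obstruction the paper uses (a defined but non-vanishing triple Massey product, which formality rules out for right-angled Artin groups) and the same computational input (the simple-commutator-related presentation coming from Theorem~\ref{thm:SCRG} and the Farley--Sabalka Morse theory), but it stops exactly where the proof has to be carried out, and the one concrete mechanism you do offer is faulty. A deformation retraction of $\Gamma$ onto $\widehat{N}$ induces nothing on discrete configuration spaces: graph braid groups are not homotopy invariants of the graph (the $2$-strand braid group of a subdivided $Y$-graph is $\mathbb{Z}$ although $Y$ is contractible), attaching trees to $\widehat{N}$ genuinely changes $B_4$, and no retraction $B_4\Gamma\to B_4\widehat{N}$ is available. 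So even the case you call painless already needs the global argument, and for the general case you explicitly defer the entire content (``this is where the real work lies''). The preliminary reduction to planar graphs is also superfluous, since $N_1$-free graphs are cactus graphs and hence outer-planar.

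What is missing is the actual mechanism by which the paper globalizes the local obstruction, and it is not the one you sketch. The paper never multiplies cochains in the Morse complex (where the cup product is not given by any simple formula and would itself have to be transferred); it computes cup and Massey products from the group presentation by Fox calculus via the Fenn--Sjerve and Matei formulas (Proposition~\ref{pro:cup}). More importantly, the classes $\alpha,\beta,\gamma$ cannot be duals of single critical $1$-cells supported on $\widehat{N}$ as you propose: in the $N_2$ configuration, for example, the relator $r_1=[y,x_1^{-1}x_2]$ already gives $y^*\cup x_2^*\neq 0$, so such naive local choices fail $\alpha\cup\beta=\beta\cup\gamma=0$ before the rest of $\Gamma$ even enters. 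In Lemma~\ref{lem:oif} the middle class is the sum of duals over a whole family of critical $1$-cells (e.g.\ all $A_k(\vec a)$ whose last coordinate is positive), and the vanishing of the relevant cup products against \emph{all} relators of $B_4\Gamma$ --- including those coupling the nucleus to the remaining blocks --- is precisely what Lemmas~\ref{cupcond} and~\ref{cupcond4} establish. Finally, instead of a detecting $2$-cycle, the indeterminacy is disposed of once and for all by Lemma~\ref{lem:3}: the Massey representative $\rho$ has a prescribed nonzero projection onto the duals of the finitely many listed relators, while every generator of $\alpha\cup H^1+H^1\cup\gamma$ and every contribution from the unspecified generators and relators lies in a complementary span, which is exactly what makes the argument insensitive to the rest of $\Gamma$. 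Without ingredients playing the roles of these family-type classes, the cup-zero lemmas, and this robust non-membership criterion, your outline states the problem rather than solving it.
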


This article is organized as follows. In \S2, we briefly introduce the triple Massey product and use it to show that $B_4N_i$ are not right-angled Artin groups for $i=2,3,4$. In conjunction with Theorem~\ref{thm:if}, the first four graphs in Figure~\ref{fig:4nuclei} are indeed 4-nuclei. In \S3, we compute a presentation of $B_n\Gamma$ for $\Gamma$ not containing $N_1$ using the discrete Morse theory. In \S4, we prove Theorem~\ref{thm:SCRG} and Theorem~\ref{thm:if} using the computation in \S3. In \S5, we show Theorem~\ref{thm:onlyif} using the triple Massey product.

\section{Triple Massey product and nuclei}\label{s:two}
\subsection{Triple Massey products}\label{ss21:TMP}
We will use the triple Massey product to detect groups that are not right-angled Artin groups. Given a presentation of a group, we need to compute the triple Massey product on the cohomology ring of the group. We begin with recalling relevant results by Fenn and Sjerve \cite{FS4,FS5} and Matei and Suciu \cite{MS,Ma}.

Let $G$ be a group. For cohomology classes $\alpha, \beta, \gamma$ in
$H^1(G)$ such that $\alpha\cup\beta=0$ and
$\beta\cup\gamma=0$, the \emph{triple Massey product} $\langle
\alpha,\beta,\gamma\rangle$ is defined.
We will abuse notations by using the same letter to denote a (co)cycle and its a (co)homology class. Choose 1-cochains $x,y$ such that
$\delta x=\alpha\cup\beta$ and $\delta y=\beta\cup\gamma$ where
$\delta$ denote the coboundary homomorphism. Then $\alpha\cup y+x\cup\gamma$ is
a 2-cocycle and its cohomology class in $H^2(G)$ is well-defined up to the subgroup $\alpha\cup H^1(G)+H^1(G)\cup\gamma$. The triple Massey product $\langle
\alpha,\beta,\gamma\rangle$ is the coset of $\alpha\cup y+x\cup\gamma$ modulo $\alpha\cup H^1(G)+H^1(G)\cup\gamma$ in $H^2(G)$.

Let $G=\langle x_1,\ldots,x_p|r_1,\ldots,r_q\rangle$ be a commutator-related group such that $r_i\in [F,F]$ for all $i$ where $F$ is a free group over $x_1,\ldots,x_p$.
Then we can think of $H_1(G)$ as a free abelian group generated by $x_1\cdots x_p$. Let
$\{x^*_1,\ldots,x^*_p\}$ be the dual basis for
$H^1(G)$. By Hopf isomorphism, $H^2(G)$ is identified
with $K/[F,K]$ where $K$ be the normal subgroup of $F$ generated by
$r_1,\ldots,r_q$. So we think of $H_2(G)$ as a free abelian group generated by
$r_1,\ldots,r_q$. Let $\{r^*_1,\ldots,r^*_q\}$ be the dual bases for $H^2(G)$.

Let $\mathbb ZF$ be a integral group ring of $F$ with the augmentation map
$\varepsilon:\mathbb ZF\to \mathbb Z$ such that
$\varepsilon(x_i)=1$. Recall Fox derivatives $\partial_i:\mathbb
ZF\to \mathbb ZF$ defined by $\partial_i(1)=0$,
$\partial_i(x_j)=\delta_{ij}$ and
$\partial_i(uv)=\partial_i(u)\varepsilon(v)+u\partial_i(v)$. Let $\varepsilon_{i_1\ldots i_k}:\mathbb ZF\to Z$ be
defined by
$\varepsilon_{i_1\ldots i_k}=\varepsilon\partial_{i_1}\cdots\partial_{i_k}$.
Using these notations, cup products and triple Massey products on a cohomology ring of commutator-related group can explicitly be given.

\begin{pro} \label{pro:cup}
Let $G$ be a commutator-related group defined as above.\\
{\rm (1) (Fenn-Sjerve\cite{FS4})}\label{pro:cup}
The cup product $\cup:H^1(G)\wedge H^1(G)\to H^2(G;\mathbb Z)$ is given by
$$x^*_i\cup x^*_j=\sum_{k=1}^p\varepsilon_{ij}(r_k)r^*_k.$$
{\rm (2) (Matei\cite{Ma})}\label{pro:massey}
For cohomology classes $\alpha=\sum a_ix^*_i$, $\beta=\sum b_jx^*_j$, $\gamma=\sum c_kx^*_k$
in $H^1(G)$ such that $\alpha\cup\beta=\beta\cup\gamma=0$,
the Massey product $\langle \alpha,\beta,\gamma\rangle$
is the coset of the cohomology class $\rho$ in $H^2(G)$ modulo $\alpha\cup H^1(G)+H^1(G)\cup\gamma$ where
$$\rho=\sum_{\ell=1}^{q}\sum_{1\le i,j,k\le
p}a_ib_jc_k\varepsilon_{ijk}(r_\ell)r^*_\ell.$$
\end{pro}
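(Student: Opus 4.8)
The plan is to build the standard partial free resolution attached to the presentation, equip it (and, for the Massey product, a genuine differential graded model) with explicit diagonal data, and then read both formulas off the Fox-derivative recursion; I would keep the Magnus embedding of $F$ in reserve as a conceptual packaging of the same bookkeeping. For the setup, let $\phi\colon\mathbb{Z}F\to\mathbb{Z}G$ be the canonical surjection. Fox's formula gives the start of a free $\mathbb{Z}G$-resolution $\cdots\to\mathbb{Z}G^{q}\to\mathbb{Z}G^{p}\to\mathbb{Z}G\to\mathbb{Z}\to0$ of $\mathbb{Z}$, with $\partial_1 e_i=x_i-1$, $\partial_2 f_\ell=\sum_i\phi(\partial_i r_\ell)\,e_i$, and $\varepsilon$ the augmentation; call it $C_\ast$. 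Applying $\mathrm{Hom}_{\mathbb{Z}G}(-,\mathbb{Z})$ and using that $r_\ell\in[F,F]$ forces every exponent sum $\varepsilon_i(r_\ell)=\varepsilon\partial_i r_\ell$ to vanish, the differential $d^1$ of the resulting cochain complex is zero; together with the standing assumption that $r_1,\dots,r_q$ form a basis of $H_2(G)=K/[F,K]$ (Hopf's formula, applicable here because $r_\ell\in[F,F]$ puts all of $K$ inside $[F,F]$), this identifies $H^1(G)=\mathbb{Z}^p$ and $H^2(G)=\mathbb{Z}^q$ with the dual bases $\{x_i^*\}$, $\{r_\ell^*\}$ of the statement. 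The Magnus embedding $\mu\colon\mathbb{Z}F\hookrightarrow\mathbb{Z}\langle\langle X_1,\dots,X_p\rangle\rangle$, $x_i\mapsto 1+X_i$, satisfies $\mu(w)=\sum_{k\ge0}\sum_{i_1,\dots,i_k}\varepsilon_{i_1\dots i_k}(w)\,X_{i_1}\cdots X_{i_k}$, so the numbers $\varepsilon_{ij}$ and $\varepsilon_{ijk}$ are exactly the quadratic and cubic coefficients of $\mu(r_\ell)$.

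For part (1) I would choose a $\mathbb{Z}G$-chain map $\Delta\colon C_\ast\to C_\ast\otimes C_\ast$ covering $\mathrm{id}_{\mathbb{Z}}$ (a diagonal approximation), normalized with $\Delta_0(1)=1\otimes1$ and $\Delta_1 e_i=e_i\otimes x_i+1\otimes e_i$; the only real choice is $\Delta_2 f_\ell$, which exists because $C_\ast\otimes C_\ast$ is again a resolution. Unwinding $r_\ell$ letter by letter and iterating the Leibniz rule for Fox derivatives, the $C_1\otimes C_1$-component of $\Delta_2 f_\ell$ can be arranged so that its $e_i\otimes e_j$-slot has coefficient augmenting to $\varepsilon_{ij}(r_\ell)$, the remaining components lying in $C_2\otimes C_0$ and $C_0\otimes C_2$; the one nonformal step is checking $\partial^{\,C\otimes C}_2\Delta_2=\Delta_1\partial_2$. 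Then for trivial-coefficient $1$-cocycles $u,v$ one has $(u\cup v)(f_\ell)=(u\otimes v)(\Delta_2 f_\ell)$, and since the coefficients are killed by $\varepsilon$ only the $C_1\otimes C_1$ term survives, giving $(x_i^*\cup x_j^*)(f_\ell)=\varepsilon_{ij}(r_\ell)$, i.e.\ $x_i^*\cup x_j^*=\sum_\ell\varepsilon_{ij}(r_\ell)\,r_\ell^*$. Equivalently, this records the quadratic part of the Magnus expansions of the relators.

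For part (2) some extra care is needed: on the minimal Fox complex $d^1=0$, so it carries no $1$-cochain with prescribed nonzero coboundary and is too rigid to see Massey products directly. I would instead work in a genuine differential graded model — the bar resolution $B_\ast(G)$, or the cellular cochains of an aspherical CW-model of $G$ — where $d^1\ne0$: the hypotheses $\alpha\cup\beta=0$ and $\beta\cup\gamma=0$ then produce $1$-cochains $x,y$ with $\delta x=\alpha\cup\beta$ and $\delta y=\beta\cup\gamma$, and $\alpha\cup y+x\cup\gamma$ is a genuine $2$-cocycle. To read its class off against $r_\ell$ I would use a low-degree comparison chain map $\tau\colon C_\ast\to B_\ast$: on $f_\ell$, $\tau_2 f_\ell$ is the standard $2$-chain telescoping the word $r_\ell$ into its subwords, and evaluating $\alpha\cup y+x\cup\gamma$ on it, then substituting the defining relations for $x,y$ and using the third-order Leibniz identity for $\partial_i\partial_j\partial_k$, collapses the whole expression to $\varepsilon_{ijk}(r_\ell)$ weighted by $a_ib_jc_k$. (Conceptually: $\langle\alpha,\beta,\gamma\rangle$ is the obstruction to lifting the induced map from $G$ to the free $2$-step nilpotent group up to the free $3$-step nilpotent group, and this obstruction evaluated on $r_\ell$ is the $X_iX_jX_k$-coefficient of $\mu(r_\ell)$.) Finally, replacing $x$ (resp.\ $y$) by another solution differs by a $1$-cocycle, i.e.\ an element of $H^1(G)$, which changes the $2$-cocycle by an element of $H^1(G)\cup\gamma$ (resp.\ $\alpha\cup H^1(G)$); hence the coset of $\rho=\sum_\ell\bigl(\sum_{i,j,k}a_ib_jc_k\,\varepsilon_{ijk}(r_\ell)\bigr)r_\ell^*$ modulo $\alpha\cup H^1(G)+H^1(G)\cup\gamma$ is well defined, which is the assertion.

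The main obstacle I anticipate lies entirely in the chain-level comparison data: producing the degree-$2$ diagonal approximation on the Fox resolution and verifying the chain-map identity for part (1), and for part (2) producing the degree-$2$ comparison map to the bar (or cellular) model and identifying $\tau_2 f_\ell$ explicitly in terms of the word $r_\ell$. Once these are pinned down, both displayed identities reduce to the Leibniz recursion for Fox derivatives (expanding a word through its subwords), and the only remaining subtlety is confirming that the indeterminacy produced by the construction is exactly $\alpha\cup H^1(G)+H^1(G)\cup\gamma$ rather than something coarser.
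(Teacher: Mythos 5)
This proposition is not proved in the paper at all: part (1) is imported from Fenn--Sjerve \cite{FS4} and part (2) from Matei \cite{Ma}, so there is no internal argument to compare against. Your outline essentially reconstructs the standard proofs of those sources (Fox calculus on the partial free resolution coming from the presentation, and the Magnus-expansion interpretation of $\varepsilon_{ij}$, $\varepsilon_{ijk}$), and as a route it is sound; in particular your observation that the minimal Fox complex has $d^1=0$ and therefore cannot carry the Massey computation directly, forcing a passage to a genuine DG model, is exactly the right caution. Two points would need to be made explicit to turn the sketch into a proof. First, in part (2) the displayed value $\sum a_ib_jc_k\,\varepsilon_{ijk}(r_\ell)$ is obtained only for a \emph{specific} choice of the cochains $x,y$, namely the ones defined on $G$ by the quadratic Fox coefficients, $x(\bar w)=-\sum_{i,j}a_ib_j\varepsilon_{ij}(w)$ and similarly for $y$; these are well defined on $G$ (independent of the lift $w\in F$) precisely because $\alpha\cup\beta=0$ and $\beta\cup\gamma=0$ kill the relator contributions via part (1). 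With arbitrary solutions of $\delta x=\alpha\cup\beta$, $\delta y=\beta\cup\gamma$ the evaluation on $\tau_2 f_\ell$ does not collapse to $\varepsilon_{ijk}(r_\ell)$; it only agrees modulo the indeterminacy, which is enough for the statement but should be said, since your phrase ``substituting the defining relations for $x,y$'' currently hides the construction that makes the Leibniz telescoping work. Second, the identification $H^2(G)\cong\mathbb{Z}^q$ with dual basis $\{r_\ell^*\}$, and the legitimacy of evaluating classes of $H^2(G)$ on the $2$-cells $f_\ell$ of the presentation complex, rest on the standing assumption that $r_1,\dots,r_q$ freely generate $K/[F,K]$ together with universal coefficients (equivalently, injectivity of $H^2(G)\to H^2$ of the presentation $2$-complex); you flag this assumption, and it is indeed how the paper uses the proposition, but it is an assumption on the presentation rather than a consequence of $r_\ell\in[F,F]$ alone. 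With those two items pinned down, your argument gives the quoted formulas and is a legitimate self-contained replacement for the citations.
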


The following formulas are immediate from the definition and are useful to apply the proposition.
\begin{align*}
\varepsilon_{k\ell}([u,v])&= \varepsilon_k(u)\varepsilon_\ell(v)-\varepsilon_k(v)\varepsilon_\ell(u)\\
\varepsilon_{k\ell m}([u,v])&= \varepsilon_k(u)\varepsilon_{\ell m}(v)-\varepsilon_m(u)\varepsilon_{k\ell}(v)+
\varepsilon_{k\ell}(u)\varepsilon_m(v)-\varepsilon_k(v)\varepsilon_{\ell m}(u)\\
&+(\varepsilon_k(v)\varepsilon_{\ell}(u)-\varepsilon_k(u)\varepsilon_{\ell}(v))(\varepsilon_m(u)+\varepsilon_m(v)).
\end{align*}
And for $u = x_{i_1} ^{j_1} \cdots x_{i_t} ^{j_t}$,

\begin{align*}
\varepsilon_{k} (u) &= \sum_{s=1} ^{t} j_s \delta_{ki_s}\\
\varepsilon_{k\ell} (u) &= \sum_{1\leq r < s \leq t} j_r j_s \delta_{k i_r} \delta_{\ell i_s} + \sum_{r=1} ^{t} \delta_{k i_r} \delta_{\ell i_r} \frac{j_r - 1}{2} j_r.
\end{align*}

Massey products vanish for cohomology rings of right-angled Artin groups since their Eilenberg-MacLane spaces are formal~\cite{PS}. But we verify this fact via direct computation as a warm-up.

\begin{lem}
Let $G$ be a right-angled Artin group. Every triple Massey product on $H^1(G)$ vanishes.
\end{lem}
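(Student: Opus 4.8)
The plan is to apply Proposition~\ref{pro:cup} to the standard presentation of a right-angled Artin group and check by hand that the representative cocycle produced by part~(2) is identically zero. Write $G=A(\Lambda)=\langle x_1,\dots,x_p\mid [x_s,x_t],\ \{s,t\}\in E(\Lambda)\rangle$, where $\Lambda$ is the defining graph. Every relator is a simple commutator, so $G$ is commutator-related and the setup preceding Proposition~\ref{pro:cup} applies verbatim: $\{x_1^*,\dots,x_p^*\}$ is a basis of $H^1(G)$, and $\{r_\ell^*\}$ is a basis of $H^2(G)$ indexed by the edges $r_\ell=[x_{s_\ell},x_{t_\ell}]$ of $\Lambda$.

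First I would record the Fox-derivative data for a commutator of two generators. Since $x_a$ is a single letter, the displayed formula for $\varepsilon_{k\ell}(u)$ gives $\varepsilon_{k\ell}(x_a)=0$ for all $k,\ell$, while $\varepsilon_k(x_a)=\delta_{ka}$. Substituting $u=x_s$, $v=x_t$ into the displayed five-term expansion of $\varepsilon_{k\ell m}([u,v])$ and into the expansion of $\varepsilon_{k\ell}([u,v])$, every summand containing a second-order Fox derivative drops out, leaving
\[
\varepsilon_{ij}([x_s,x_t])=\delta_{is}\delta_{jt}-\delta_{it}\delta_{js},
\qquad
\varepsilon_{ijk}([x_s,x_t])=\bigl(\delta_{it}\delta_{js}-\delta_{is}\delta_{jt}\bigr)\bigl(\delta_{ks}+\delta_{kt}\bigr).
\]
The decisive observation is that these two quantities are proportional: $\varepsilon_{ijk}(r_\ell)=-\varepsilon_{ij}(r_\ell)\,(\delta_{k s_\ell}+\delta_{k t_\ell})$ for every edge $\ell$.

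Now let $\alpha=\sum a_ix_i^*$, $\beta=\sum b_jx_j^*$, $\gamma=\sum c_kx_k^*$ in $H^1(G)$ with $\alpha\cup\beta=\beta\cup\gamma=0$. By Proposition~\ref{pro:cup}(1) the coefficient of $r_\ell^*$ in $\alpha\cup\beta$ is $\sum_{i,j}a_ib_j\,\varepsilon_{ij}(r_\ell)$, so the hypothesis $\alpha\cup\beta=0$ says precisely that this scalar vanishes for every $\ell$. Feeding the proportionality above into the formula of Proposition~\ref{pro:cup}(2), the coefficient of $r_\ell^*$ in the representative $\rho$ is
\[
\sum_{1\le i,j,k\le p}a_ib_jc_k\,\varepsilon_{ijk}(r_\ell)
=-\Bigl(\sum_{i,j}a_ib_j\,\varepsilon_{ij}(r_\ell)\Bigr)(c_{s_\ell}+c_{t_\ell})=0 .
\]
Hence $\rho=0$ in $H^2(G)$, so $0\in\langle\alpha,\beta,\gamma\rangle$. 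As $\alpha,\beta,\gamma$ were arbitrary subject to the defining vanishing of cup products, every triple Massey product on $H^1(G)$ vanishes.

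Frankly there is no real obstacle here, since this is a bookkeeping computation; the one point deserving attention is the vanishing of all second-order Fox derivatives of a single generator, which is exactly what collapses the five-term expansion of $\varepsilon_{k\ell m}([u,v])$ down to the single product that then factors through the cup-product relation. It is worth noting that, for this presentation, the hypothesis $\alpha\cup\beta=0$ alone already forces $\rho$ to vanish, the hypothesis $\beta\cup\gamma=0$ not being needed; this is harmless, since $\rho$ is in any case only well-defined modulo the indeterminacy subgroup.
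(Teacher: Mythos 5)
Your proposal is correct and follows essentially the same route as the paper: both apply Proposition~\ref{pro:cup} with the displayed $\varepsilon$-formulas evaluated on commutators of single generators, so that the coefficient of each $r_\ell^*$ in the Massey representative $\rho$ factors as (the coefficient of $r_\ell^*$ in $\alpha\cup\beta$) times $(c_{s_\ell}+c_{t_\ell})$ and hence vanishes. Your packaging of this as the proportionality $\varepsilon_{ijk}(r_\ell)=-\varepsilon_{ij}(r_\ell)(\delta_{ks_\ell}+\delta_{kt_\ell})$, and your remark that only $\alpha\cup\beta=0$ is actually used, are just tidier statements of the same computation.
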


\begin{proof}
For each $1\le \ell\le q$, assume $r_\ell=[x_{i_\ell},x_{j_\ell}]$ for some $1\le i_\ell, j_\ell\le p$. Let $\alpha=\sum a_ix^*_i$, $\beta=\sum b_jx^*_j$, $\gamma=\sum c_kx^*_k$ be in $H^1(G)$ such that $\alpha\cup\beta=\beta\cup\gamma=0$. Then by Proposition~\ref{pro:cup}(1), $a_{i_\ell}b_{j_\ell}=a_{j_\ell}b_{i_\ell}$ and $b_{i_\ell}c_{j_\ell}=b_{j_\ell}c_{i_\ell}$ for all $1\le\ell\le q$.
By applying the formulas for $\epsilon_i$ on commutators, the cohomology class $\rho$ in Proposition~\ref{pro:cup}(2) is given by
$$\rho=\sum_{\ell=1}^{q}(a_{j_\ell}b_{i_\ell}-a_{i_\ell}b_{j_\ell})(c_{i_\ell}+c_{j_\ell})=0.$$
\end{proof}

Consequently triple Massey product are obstructions for a commutator-related group to become a right-angled Artin group.

\subsection{Application to simple-commutator-related groups}\label{ss22:Nu}

If $G$ is a simple-commutator-related group, it is easier to compute cup products and triple Massey products on $H^1(G)$. Suppose $X$ and $Y$ are subsets of generators and $R$ is the set of relators for a simple-commutator-related presentation.  If for each $r=[u,v] \in R$,
$$\sum_{x \in X, y \in Y } \varepsilon_x (u) \varepsilon_y (v) - \varepsilon_x (v) \varepsilon_y (u) = 0.$$
then the cup product between $\sum_{x\in X} x^*$ and $\sum_{y\in Y} y^*$ vanishes by Proposition~\ref{pro:cup}(1). In this case we say the pair $(X,Y)$ satisfies the {\em cup zero condition}.

In the proof of the following lemma, we proceed as follows to show a  simple-commutator related group $G$ is not a right-angled Artin group. We first specify three subsets $X$, $Y$ and $Z$ of generators of $G$ such that pairs $(X,Y)$ and $(Y,Z)$ satisfy the cup zero condition so that $\alpha\cup\beta=\beta\cup\gamma=0$ for $\alpha = \sum_{x\in X} x^*$, $\beta = \sum_{y\in Y} y^*$, and $\gamma = \sum_{z\in Z} z^*$ in $H^1 (G)$. We also give a set of generators for the subgroup $H=\alpha\cup H^1 (G)+H^1 (G)\cup\gamma$ of $H^2(G)$ and then give a representative $\rho$ of the triple Massey product $\langle\alpha,\beta,\gamma\rangle$ obtained by Proposition~\ref{pro:cup}(2) that is not contained in the subgroup $H$. Consequently $G$ cannot be a right-angled Artin group.

\begin{lem}\label{lem:3}
Suppose that a group $G$ has one of the following simple-commutator-related presentations with the set $S$ of generators and the set $R$ of relators and for given subsets $X,Y,Z$ of $S$, pairs $(X,Y)$ and $(Y,Z)$ satisfy the cup zero condition.
\begin{enumerate}
  \item $\{a,b,c,x,z\}\subset S$ and $\{[x,a^{-1}b], [x,a^{-1}c],[zaxa^{-1},b^{-1}c],[z,b^{-1}c]\}\subset R$ and let
      $X=\{x\}$, $\{a,b,c\}\subset Y$, $z\not\in Y$ and $Z=\{b\}$.
  \item $\{a,b,x,y\}\subset S$ and
  $\{[x,a^{-1}b], [y,a^{-1}b],[x,baya^{-1}b^{-1}]\}\subset R$ and let $X=Z=\{a,x,y\}$, $\{a,b\}\subset Y$ and $x,y\not\in Y$.
  \item $\{e,x,y\}\subset S$ and $\{[y, \omega_1 x \omega_1^{-1}], [z, \omega_2 x \omega_2^{-1}], [z, e \omega_3 y \omega_3^{-1} e^{-1}]\}\subset R$ and let $X=Z=\{x,y,z\}$, $e\in Y$ and $x,y,z\not\in Y$ where $\omega_i$ are words of generators not in $X\cup Y\cup Z$.
\end{enumerate}
Then $G$ is not a right-angled Artin group.
\end{lem}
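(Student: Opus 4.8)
The plan is to carry out, for each of the three presentations, precisely the program sketched just before the statement. Set $\alpha=\sum_{x\in X}x^*$, $\beta=\sum_{y\in Y}y^*$ and $\gamma=\sum_{z\in Z}z^*$ in $H^1(G)$. By the cup zero hypothesis together with Proposition~\ref{pro:cup}(1) we have $\alpha\cup\beta=\beta\cup\gamma=0$, so $\langle\alpha,\beta,\gamma\rangle$ is defined as a coset of $H:=\alpha\cup H^1(G)+H^1(G)\cup\gamma$ in $H^2(G)$, and Proposition~\ref{pro:cup}(2) supplies the explicit representative $\rho=\sum_{\ell}\big(\sum_{i,j,k}a_ib_jc_k\,\varepsilon_{ijk}(r_\ell)\big)r_\ell^*$. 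Since every triple Massey product on $H^1$ of a right-angled Artin group vanishes (the preceding lemma), it is enough to exhibit in each case a representative $\rho\notin H$.

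The subtlety is that a simple-commutator-related presentation of $G$ may carry many generators and relators besides the ones listed, so I would argue with a coordinate projection. Let $r_1,\dots,r_m$ be the listed relators ($m=4,3,3$ in cases (1),(2),(3)) and let $\pi\colon H^2(G)\to\mathbb{Z}^m$ pick out the $r_1^*,\dots,r_m^*$ coordinates. The $r_\ell^*$-component of a cup product $\eta\cup\eta'$ equals $\sum_{i,j}\eta_i\eta'_j\varepsilon_{ij}(r_\ell)$ and the $r_\ell^*$-component of $\rho$ equals $\sum_{i,j,k}a_ib_jc_k\varepsilon_{ijk}(r_\ell)$; both depend on $r_\ell$ alone, so $\pi(\rho)$ and all the vectors $\pi(\alpha\cup x_j^*)$, $\pi(x_j^*\cup\gamma)$ are determined by $r_1,\dots,r_m$ only. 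Moreover $\varepsilon_{ij}(r_\ell)=0$ whenever $x_j$ does not occur in $r_\ell$, so only the finitely many generators appearing in $r_1,\dots,r_m$ produce nonzero $\pi(\alpha\cup x_j^*)$ or $\pi(x_j^*\cup\gamma)$, whence $\pi(H)$ is an explicit finite-rank sublattice of $\mathbb{Z}^m$. If $\rho\in H$ then $\pi(\rho)\in\pi(H)$, so it suffices to check $\pi(\rho)\notin\pi(H)$, a finite linear-algebra computation.

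Next I would simply run the Fox-calculus formulas on the listed commutators. Using the expansions of $\varepsilon_i$ and of $\varepsilon_{ij},\varepsilon_{ijk}$ on a commutator one gets, in case (1), $\pi(\rho)=-r_1^*+2r_3^*$ while $\pi(H)=\langle r_1^*+r_2^*,\ r_1^*-r_3^*,\ r_3^*+r_4^*\rangle$; in case (2), $\pi(\rho)=r_1^*+r_2^*+4r_3^*$ while $\pi(H)=\langle r_1^*+r_2^*,\ r_1^*-r_3^*\rangle$; and in case (3) the conjugators $\omega_i$ are words in generators outside $X\cup Y\cup Z$, hence invisible to every $\varepsilon_i$, $\varepsilon_{ij}$, $\varepsilon_{ijk}$ that enters the computation, so one may compute as if the relators were $[y,x]$, $[z,x]$, $[z,eye^{-1}]$ and obtains $\pi(\rho)=2r_3^*$ while $\pi(H)=\langle r_1^*+r_2^*,\ r_1^*-r_3^*\rangle$. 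In each case the $\mathbb{Z}$-linear functional sending $(r_1^*,r_2^*,r_3^*,\dots)$ to $(1,-1,1,-1,\dots)$ annihilates the displayed generators of $\pi(H)$ but sends $\pi(\rho)$ to $1$, $4$, $2$ respectively; hence $\pi(\rho)\notin\pi(H)$, so $\rho\notin H$, so $\langle\alpha,\beta,\gamma\rangle\neq0$, and therefore $G$ is not a right-angled Artin group.

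The only real obstacle is the bookkeeping: the second- and third-order Fox derivatives of the ``long'' relators, namely $[zaxa^{-1},b^{-1}c]$ in (1), $[x,baya^{-1}b^{-1}]$ in (2) and $[z,e\omega_3y\omega_3^{-1}e^{-1}]$ in (3), must be computed with care, keeping track of the $a\cdots a^{-1}$ and $e\cdots e^{-1}$ telescoping (which leaves the first derivatives unchanged but does contribute genuine correction terms at second and third order) and, in case (3), of the vanishing $\varepsilon_i(\omega_j)=\varepsilon_{ik}(\omega_j)=0$ for all $i,k\in X\cup Y\cup Z$, which is what lets one discard the $\omega_j$. Once these evaluations are carried out the linear-algebra step above is immediate, and one should also note that the membership conditions imposed on $X,Y,Z$ (e.g.\ $z\notin Y$ and $x\notin Y$ being irrelevant in (1), $x,y\notin Y$ in (2), $e\in Y$ and $x,y,z\notin Y$ in (3)) are exactly what makes these evaluations collapse to the stated vectors.
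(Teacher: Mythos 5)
Your proposal is correct and follows essentially the same route as the paper: choose $\alpha,\beta,\gamma$ as indicated, apply Proposition~\ref{pro:cup} and the commutator formulas for $\varepsilon_{ij},\varepsilon_{ijk}$, and show the representative $\rho$ lies outside $\alpha\cup H^1(G)+H^1(G)\cup\gamma$; your coordinate projection onto $r_1^*,\dots,r_m^*$ is just a cleaner formalization of the paper's bookkeeping with the $\lambda_i$'s, and your computed values of $\pi(\rho)$ and $\pi(H)$ agree with the paper's ($-r_1^*+2r_3^*$, $r_1^*+r_2^*+4r_3^*$, $2r_3^*$ respectively), with the alternating-sign functional serving as an explicit certificate of non-membership.
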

\begin{proof}
(1) To use the notations defined above, set $r_1=[x,a^{-1}b]$, $r_2=[x,a^{-1}c]$, $r_3=[zaxa^{-1},b^{-1}c]$, $r_4=[z,b^{-1}c]$.
Then $\alpha\cup a^*=-(r^*_1 + r^*_2+\lambda_1)$, $\alpha\cup b^*=r^*_1 - r^*_3 +\lambda_2$, $\alpha\cup c^*=r^*_2 + r^*_3 +\lambda_3$, $\alpha\cup x^*=\alpha\cup z^*=0$, $a^*\cup\gamma=b^*\cup\gamma=c^*\cup\gamma=0$, $x^*\cup\gamma=r^*_1 - r^*_3 +\lambda_2$ and $z^*\cup\gamma=-(r^*_3 +r^*_4+\lambda_4)$ where $\lambda_j$'s are linear combinations not containing $r^*_1,r^*_2,r^*_3$ and $r^*_4$ in $H^2(G)$.
So $\alpha \cup H^1 (G) = \langle r^*_1 + r^*_2+\lambda_1 , r^*_1 - r^*_3 +\lambda_2, r^*_2 + r^*_3 +\lambda_3,\lbrace \lambda_i \rbrace\rangle$ and $ H^1 (G) \cup \gamma = \langle r^*_1 - r^*_3 +\lambda_2, r^*_3 +r^*_4+\lambda_4,\lbrace \lambda_i \rbrace \rangle$ and so $$H=\langle r^*_1 - r^*_2 +\lambda_1, r^*_1 - r^*_3+\lambda_2, r^*_3 +r^*_4+\lambda_4,\lbrace \lambda_i \rbrace\rangle.$$
\begin{align*}
\rho&= \sum_{r_i\in R} \varepsilon_{\alpha\beta\gamma}(r_i)r^*_i=\sum_{r_i\in R} (\varepsilon_{xab}(r_i)+\varepsilon_{xbb}(r_i)+\varepsilon_{xcb}(r_i))r^*_i\\
&=(\varepsilon_{xab}(r_1)+\varepsilon_{xbb}(r_1))r^*_1+0r^*_2
+(\varepsilon_{xab}(r_3)+\varepsilon_{xbb}(r_3)+\varepsilon_{xcb}(r_3))r^*_3+0r^*_4+\lambda\\
&=(0-1)r^*_1+(1+0+1)r^*_3+\lambda=-r^*_1+2r^*_3+\lambda
\end{align*}
where $\lambda$ is a linear combination not containing $r^*_1,r^*_2,r^*_3$ and $r^*_4$ in $H^2(G)$.
Thus $\rho$ is not in $H$.\\
(2) Set $r_1=[x,a^{-1}b]$, $r_2=[y,a^{-1}b]$, $r_3=[x,baya^{-1}b^{-1}]$.
Then $H=\alpha \cup H^1 (G) = H^1(G) \cup \gamma = \langle r^*_1 + r^*_2+\lambda_1, r^*_1 - r^*_3 +\lambda_2, r^*_2 + r^*_3+\lambda_3,\lbrace \lambda_i \rbrace  \rangle$. Thus $\rho = r^*_1 + r^*_2 + 4r^*_3+ \lambda$ is not in $H$ where $\lambda_j$'s and $\lambda$ are linear combinations not containing $r^*_1,r^*_2$ and $r^*_3$ in $H^2(G)$.\\
(3) Set $r_1=[y, \omega_1 x \omega_1^{-1}]$, $r_2=[z, \omega_2 x \omega_2^{-1}]$, $r_3=[z, e \omega_3 y \omega_3^{-1} e^{-1}]$. Then $H=\alpha \cup H^1 (G) = H^1(G) \cup \gamma = \langle r^*_1 + r^*_2 + \lambda_1 , r^*_1 - r^*_3 + \lambda_2 , r^*_2 + r^*_3 + \lambda_3 , \lbrace \lambda_i \rbrace \rangle$ and so $\rho = 2r^*_3 + \lambda$ is not in $H$ where $\lambda_j$'s and $\lambda$ are linear combinations not containing $r^*_1,r^*_2$ and $r^*_3$ in $H^2(G)$.
\end{proof}

\begin{lem}\label{lem:4}
$B_4 N_2$, $B_4 N_3$ and $B_4 N_4$ are not a right-angled Artin groups.
\end{lem}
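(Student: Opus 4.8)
The proof is a case analysis over $i\in\{2,3,4\}$, and in each case the plan is the same: exhibit an explicit simple-commutator-related presentation of $B_4N_i$, recognize it as an instance of one of the three presentation patterns (1)--(3) of Lemma~\ref{lem:3}, and quote that lemma. Since Lemma~\ref{lem:3} already packages the cup-zero checks and the computation of the Massey product representative $\rho$, the whole burden is to produce the right presentations.

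First I would fix, for each $N_i$, a subdivision fine enough that the topological configuration space of four points deformation retracts onto $UD_4N_i$; by the criterion recalled in the introduction it suffices that every edge-path between vertices of valence $\neq 2$ have length $\geq 3$ and every loop have length $\geq 5$. On the subdivided graph I would run the Farley--Sabalka discrete Morse theory (the method used systematically in \S3): choose a root and a maximal tree, order the cells, and read off the critical $1$-cells as generators and the critical $2$-cells, with their Morse boundary words, as relators. Since $N_2$, $N_3$, $N_4$ are small this is a finite, if tedious, computation; for the tree $N_4$ the relevant presentation is essentially that of \cite{FS2}, and in all three cases the resulting presentation can be taken simple-commutator-related because none of these graphs contains $N_1$ (Theorem~\ref{thm:SCRG}).

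Next I would simplify each presentation by Tietze moves --- eliminating generators coming from ``inessential'' critical cells and rewriting boundary words as honest commutators --- until it visibly has the form (1), (2) or (3) of Lemma~\ref{lem:3}: one graph should yield five distinguished generators $a,b,c,x,z$ with the four relators $[x,a^{-1}b]$, $[x,a^{-1}c]$, $[zaxa^{-1},b^{-1}c]$, $[z,b^{-1}c]$; another should yield $a,b,x,y$ with the three relators of (2); and the third should match (3), with the conjugating words $\omega_i$ absorbing the remaining generators. For each I would then take the subsets $X,Y,Z$ prescribed in Lemma~\ref{lem:3} and verify the cup zero condition for the pairs $(X,Y)$ and $(Y,Z)$ --- a short relator-by-relator sign count using the displayed formula for $\varepsilon_{k\ell}$ on a commutator --- which is exactly the hypothesis Lemma~\ref{lem:3} needs in order to conclude that $B_4N_i$ is not a right-angled Artin group.

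The main obstacle is the middle step. A discrete Morse presentation of a $4$-braid group generally has many generators and long boundary words, so the root, the maximal tree and the cell order must be chosen with care: after Tietze reduction exactly the template relators should survive, while every other relator is either trivial or supported on generators disjoint from $X\cup Y\cup Z$, so that it contributes only to the ``$\lambda$''-type terms in Lemma~\ref{lem:3} and cannot spoil the assertion that $\rho\notin H$. Once each presentation is in template form, the remaining verification --- the cup zero checks and the evaluation of $\rho$ --- is mechanical via Proposition~\ref{pro:cup} and the computation already carried out in Lemma~\ref{lem:3}.
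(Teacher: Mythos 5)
Your proposal follows the paper's own route: derive simple-commutator-related presentations of $B_4N_i$ from the discrete Morse/Tietze computation of \S\ref{ss41:scrg} (the paper simply records the resulting presentations, e.g.\ $B_4N_2=\langle x,y,z,a,b,c\mid r_1,\dots,r_5\rangle * F_5$), pick out the subsets $X,Y,Z$, and invoke Lemma~\ref{lem:3}, so the approach is essentially identical. One small correction: you should not expect (nor do you need) the non-template relators to be supported away from $X\cup Y\cup Z$ --- in the paper's presentations they are not (e.g.\ $r_4=[y,b^{-1}c]$ for $N_2$ meets $Y$ and $Z$) --- what matters is exactly the relator-by-relator cup zero check over \emph{all} relators, which your plan already includes.
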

\begin{proof}
Simple-commutator-related presentations for $B_4 N_2$, $B_4 N_3$ and $B_4 N_4$ can be easily derived by using the result of Section~\ref{ss41:scrg} as follows and they satisfies the hypotheses of Lemma~\ref{lem:3} (See Lemma~\ref{lem:oif} for details).\\
(1) $B_4 N_2=\langle x,y,x,a,b,c\:|\: r_1, r_2, r_3, r_4, r_5 \rangle *F_5$\\
where $r_1=[x,a^{-1}b]$, $r_2=[x,a^{-1}c]$, $r_3=[zaxa^{-1}, b^{-1}c]$, $r_4=[y,b^{-1}c]$, and $r_5=[z,b^{-1}c]$. And $X=\{ x\}$, $Y=\{a,b,c\}$ and $Z=\{b\}$\\
(2) $B_4 N_3=\langle x,y,a,b \:|\: r_1, r_2, r_3 \rangle *F_{6}$\\
where $r_1=[x,a^{-1}b]$, $r_2=[y,a^{-1}b]$, and $r_3=[x,baya^{-1}b^{-1}]$. And $X=Z\{a, x, y\}$, $Y=\{a,b\}$\\
(3) $B_4 N_4=\langle x,y,z,a,b,c,d,e\:|\: r_1,r_2,r_3,r_4,r_5,r_6 \rangle *F_{16}$\\
where $r_1=[x, y]$, $r_2=[x, z]$, $r_3=[z, ebyb^{-1}e^{-1}]$, $r_4=[x,a]$, $r_5=[y, b^{-1}c]$, and $r_6=[z,d]$. And $X=Z=\{x,y,z\}$, $Y=\{e\}$
\end{proof}

\section{Discrete Morse theory and cactus graphs}\label{s:three}

\subsection{Discrete Morse theory}\label{ss31:DM}
To compute presentations of graph braid groups, we use the algorithm of Farley and Sabalka in \cite{FS1} based on the discrete Morse theory developed by Forman in~\cite{For}. Following \cite{KKP}, we review the algorithm briefly.

First we choose a maximal tree $T$ in $\Gamma$. We assume that $T$ is sufficiently subdivided and embedded in a plane. Edges in $\Gamma - T$ are called {\em deleted edges}. As the base vertex, pick a vertex of degree $1$ in $T$ or a vertex of degree 2 if there are no vertices of degree 1. We require that a path between the base vertex and any vertex of degree $\ge 3$ is also sufficiently subdivided.  Then take a regular neighborhood $N$ of $T$. Now the boundary $\partial N$ is a simple closed curve. Now starting from the base vertex numbered 0, we number unvisited vertices as traveling along $\partial N$ clockwise. Each edge $e$ in $\Gamma$ is oriented so that the initial vertex $\iota(e)$ is larger than the terminal vertex $\tau(e)$.
For a cell $c = \{c_1, \ldots, c_{n-1}, v\} \in UD_n \Gamma$, a vertex $v$($\ne 0$) in $c$ is \textit{blocked} if for the edge $e$ in $T$ with $\iota(e) = v$, $\tau(e)$ is either in $c$ or an end of an edge in $c$. We assume the base vertex $0$ is always blocked. An edge $e$ in $c$ is {\em order-respecting} if $e$ is not a deleted edge and there is no vertex $v$ in $c$ such that $v$ is adjacent to $\tau(e)$ in $T$ and $\tau(e) < v < \iota(e)$.

A cell is {\em critical} if it contains neither unblocked vertices nor order-respecting edges. The unordered discrete configuration space $UD_n\Gamma$ collapses to the CW complex consisted of critical cells.
There are one critical 0-cell and $B_n\Gamma$ has a presentation generated by critical 1-cells and related by boundaries of critical 2-cells. However the boundary of a critical 2-cell may not be a word of critical 1-cells without rewriting that corresponds to collapsing. Let $c$ be an 1-cell in a word $w$ and let $e$ denote the edge in $c$. If $c$ is not critical, there are two possibilities.
If $e$ is order-respecting and there is no unblocked vertex $v$ in $c$ such that $v<\tau(e)$, we erase $c$. Otherwise let $v$ be the smallest unblocked vertex in $c$ and $e'$ be an edge in $T$ such that $\iota(e')=v$. We rewrite $c$ by the product $c_1c_2c_3^{-1}$ of three 1-cells where $c_1$, $c_2$ and $c_3$ are obtained from $c$ by replacing the pair $(e, v)$ by $(\iota(e), e')$, $(e, \tau(e'))$ and $(\tau(e), e')$. We iterate rewriting $w$ until it becomes a word $\tilde r(w)$ on critical 1-cells. The following lemma often makes computation shorter.

\begin{lem}\label{lem:red1}{\rm (\cite{KP})}
Let $c$ be a 1-cell in $UD_n \Gamma$ and $e$ be an edge in $\Gamma$ such that $\iota(e)$ is an unblocked vertex in $c$. If there is no vertex $w$ that is either in $c$ or an end vertex of an edge in $c$ and satisfies $\tau(e)< w<\iota(e)$. Then $\tilde r(c)=\tilde r(V_e(c))$ where $V_e(c)$ denotes the $1$-cell obtained from $c$ by replacing $\iota(e)$ by $\tau(e)$.
\end{lem}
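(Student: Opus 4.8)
The plan is to argue by induction on $c$, the induction being well-founded because the Farley--Sabalka reduction algorithm terminates and each of its moves replaces a $1$-cell by strictly simpler $1$-cells; the inductive step is driven by the first move the algorithm applies to $c$. Since $\iota(e)$ is an unblocked vertex of $c$, the cell $c$ is not critical, so such a first move exists. Throughout write $c=\{d\}\cup P$, where $d$ is the edge of $c$ and $P$ is its set of $n-1$ vertices, and put $v=\iota(e)\in P$.

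If the algorithm erases $c$ --- that is, $d$ is order-respecting and $c$ has no unblocked vertex below $\tau(d)$ --- then $\tilde r(c)=1$. Since $v$ is unblocked, $v>\tau(d)$, so the particle is being slid down $e$ from a position above $\tau(d)$ with, by hypothesis, nothing of $c$ strictly between $\tau(e)$ and $v$; one shows $V_e(c)$ reduces to $1$ as well, either being erased outright or having $\tau(e)$ as its smallest unblocked vertex, in which case a further application of the induction hypothesis --- sliding the particle at $\tau(e)$ down its $T$-edge --- continues the descent until an erased configuration is reached. Keeping track here of which vertices of $c$ can become unblocked when $v$ is deleted (only those whose $T$-parent equals $v$, all of which exceed $v>\tau(d)$) is what makes the argument go through.

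Otherwise the algorithm blows up the smallest unblocked vertex $u$ of $c$ into its $T$-edge $g$, giving $\tilde r(c)=\tilde r(c_1)\tilde r(c_2)\tilde r(c_3)^{-1}$ for the three strictly simpler $1$-cells $c_1,c_2,c_3$ of the rewriting rule, and $u\le v$ because $v$ is unblocked too. If $u<v$ then $v$ is undisturbed and stays unblocked in every $c_i$, the condition forbidding a vertex strictly between $\tau(e)$ and $v$ persists, and since $V_e$ commutes with a blow-up at the other vertex $u$, applying the induction hypothesis to $c_1,c_2,c_3$ and reassembling yields $\tilde r(c)=\tilde r(V_e(c))$. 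If $u=v$: when $e$ is the $T$-edge $g$ itself, the formulas for $c_1,c_2,c_3$ give $c_2=V_e(c)$ directly, and it remains only to check $\tilde r(c_1)=\tilde r(c_3)$; but $c_1$ and $c_3$ differ precisely by sliding one particle across the edge $d$, which the induction hypothesis settles once we verify its hypotheses with $d$ playing the role of $e$. When $e$ is instead a deleted edge at $v$, one replaces this last step by the observation that $c$ and $V_e(c)$ are opposite faces of the square $2$-cell $\{d,e\}\cup(P\setminus\{v\})$, whose two remaining faces again differ only by a slide across $d$.

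The conceptual content is slight; the real obstacle is the bookkeeping. In every branch one must confirm that the hypotheses of the lemma --- unblockedness of the vertex being moved and the absence of an intervening vertex --- are inherited by the smaller cells fed to the induction hypothesis, and a handful of coincidences ($\tau(g)$ equal to $\tau(e)$ or to $\tau(d)$, the $T$-parent of $v$ being one of these vertices, or $d$ being a deleted edge so that $\iota(d)$ need not be unblocked) defeat the naive recursion and must be cleared by separate short arguments about the blocked and order-respecting status of the intermediate cells. Controlling that status throughout the rewriting is where essentially all of the work lies.
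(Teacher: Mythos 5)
The paper itself offers no proof of this lemma (it is quoted from \cite{KP}), so your argument has to stand on its own, and as written it does not: the hard points are exactly the ones you defer. The decisive gap is in your $u=v$ case. There the definition gives $\tilde r(c)=\tilde r(c_1)\,\tilde r(c_2)\,\tilde r(c_3)^{-1}$ with $c_2=V_e(c)$, so proving $\tilde r(c_1)=\tilde r(c_3)$, which is all you propose to do, only yields $\tilde r(c)=w\,\tilde r(V_e(c))\,w^{-1}$ with $w=\tilde r(c_1)$. The lemma is an identity in the free group on critical $1$-cells (it is applied inside the word-level computation of $\tilde r(\partial c)$ in Lemma~\ref{rel1}), so a conjugate is not enough: you must show the two outer faces rewrite to the empty word, and that is precisely where the hypothesis that no occupied vertex lies strictly between $\tau(e)$ and $\iota(e)$ has to be used. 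Compare case (1) of Lemma~\ref{rel1}, where the same three-face expansion produces a genuinely nontrivial conjugator $\omega$ exactly because the hypothesis of this lemma fails there; your plan never separates the two situations. The fallback for $e$ a deleted edge is also insufficient: the square $\{d,e\}\cup(P\setminus\{v\})$ is in general a non-critical $2$-cell, and its boundary gives a relation in $B_n\Gamma$, not an equality of $\tilde r$-images in the free group, which is what is being asserted.

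The erasure case is not closed either. The dichotomy you assert --- $V_e(c)$ is erased outright or has $\tau(e)$ as its smallest unblocked vertex --- is false: $\tau(e)$ can be blocked in $V_e(c)$ (its $T$-parent may be an endpoint of the edge of $c$ or an occupied vertex below $\tau(e)$), and the edge of $c$ can even cease to be order-respecting after the move; moreover your appeal to the induction hypothesis for $V_e(c)$ is outside the well-order you set up, since $V_e(c)$ is not among the cells produced by rewriting $c$ (an erased $c$ is terminal in its own rewriting tree). In fact, if one takes the collapse criterion literally as transcribed in \S\ref{ss31:DM} (erase when the edge is order-respecting and no unblocked vertex is below $\tau$ of that edge), the asserted equality can fail outright on a small tree, so this case can only be settled by first pinning down the criterion that characterizes the cells actually paired downward by the Farley--Sabalka field and then verifying directly that $V_e(c)$ satisfies it; no ``descent'' is involved. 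Similarly, in the $u<v$ case your claim that $v$ stays unblocked in each $c_i$ can fail (for instance when $e$ is a deleted edge and the $T$-edge at $u$ terminates at the $T$-parent of $v$), and these ``coincidences,'' together with the status bookkeeping you explicitly leave out, are not finishing touches --- they are the content of the lemma. As it stands this is a plausible plan, not a proof.
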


Resulting presentations heavily depends on choices of a maximal tree and an order on vertices. We will make a choice of a maximal tree that serve our purpose. From now on we only consider graphs that do not contain $N_1$. Such graphs are called {\em cactus} graphs and have an {\em outer-planar} embedding, i.e., the unbounded face contains all vertices.

Given a cactus graph $\Gamma$ on a plane, choose a vertex of degree one as the base vertex. If there is no vertices of degree one, choose a terminal cycle, i.e. a cycle that shares only one vertex with other cycles and choose a vertex of degree 2 on the terminal cycle. The other vertex of the edge is chosen as the base vertex. As we travel along $\Gamma$ starting at the base vertex, we always go to the leftmost edge at a fork vertex and turn back at a vertex of degree 1. If we enter an edge of a cycle, then delete the edge. Continue this procedure until we have a maximal tree $T$ of $\Gamma$. We number the vertices as explained above. Note that for every deleted edge $d$, $\tau(d)$ is the smallest and $\iota(d)$ is the largest vertex in the cycle containing $d$.

We now define few notations. For each vertex $v$, there is a unique edge path $\gamma_v$ from $v$ to the base vertex 0 in $T$. For vertices $v$, $w$ in $\Gamma$, $v\wedge w$ denotes the largest vertex of degree $\ge 3$ in $\gamma_v \cap\gamma_w$. Obviously, $v\wedge w\le v$ and $v\wedge w\le w$. A branch of $v$ is a component of $T - \lbrace v \rbrace$ and the branches of $v$ are sequentially numbered clockwise starting the number $0$ on the branch containing $\gamma_v$. Let $\mu(v)$ be the largest branch number. Then $\mu(v)$ is one less that the degree of $v$ in $T$.
For a pair of vertices $(v, w)$ with $v < w$, $g(v,w)$ will denote the number for the branch of $v$ containing $w$. If $v=w\wedge v$, $g(v,w)\ge 1$ and if $v>w\wedge v$, $g(v,w)=0$. If we choose a maximal tree and the numbering on vertices as above, it is easy to check that the following properties hold.

\begin{itemize}
\item[(T1)] The initial (terminal, respectively) vertices of all deleted edges are vertices of degree 2 ($\ge3$) in $\Gamma$. Note that the base vertex can also be a terminal vertex of a deleted edge if it is of degree 2;
\item[(T2)] There is no pair of deleted edges $d$ and $d'$ such that $\tau(d)=\tau(d')$ and $g(\tau(d),\iota(d))=g(\tau(d'),\iota(d'))$;
\item[(T3)] Let $d$ be a deleted edge. If $v$ is a vertex lying on the path  between $\tau(d)$ and $\iota(d)$ in $T$ then $g(v,\iota(d))=1$.
\item[(T4)] If $v_1 \wedge v_2 < v_1$ and $v_1 \wedge v_2 < v_2$, then every path in $\Gamma$ between $v_1$ and $v_2$ must pass $v_1 \wedge v_2$.
\end{itemize}

We recall the notation from \cite{FS1, KKP} that is convenient to represent 1- or 2-cells in $UD_n\Gamma$. Let $A$ be a vertex of degree $\mu+1$ in a maximal tree $T$ of $\Gamma$. Let $\vec a$ be a vector $(a_1,\ldots,a_\mu)$ of nonnegative integers and let $|\vec a|=\sum_{i=1}^\mu a_i$. And $\vec\delta_k$ denotes the $k$-th coordinate unit vector. %Here we adopt the convention that $\vec\delta_0$ is the zero vector.
Then for $1\le k\le \mu$, $A_k(\vec a)$ denotes the set consisted of one edge $e$ with $\tau(e)=A$ that lies on the $k$-th branch of $A$ together with $a_i$ blocked vertices that lie on the $i$-th branch of $A$. The edge $e$ will be denoted by $A_k$. Furthermore a deleted edge $d$ with $\tau(d)=A\ne0$ and $g(A,\iota(d))=k$ is denoted by $A_{-k}$ and this is well-defined by the property (T2). Note that there are no vertices blocked by the initial vertex of any deleted edge by (T1). Let $A_{-k}(\vec a)$ denote the set consisting of the deleted edge $d$ together with $a_i$ blocked vertices that lie on the $i$-th branch of $A=\tau(d)$ for each $i$. The deleted edge $d$ with $\tau(d)=0$ and $\iota(d)=A$ is denoted by $A_0$. This definition is slightly different from that used in \cite{FS1, KKP} and is more convenient for this work. For $1\le s\le n$, $0_s$ denotes the set $\{0,1,\ldots,s-1\}$ of $s$ consecutive vertices from the base vertex. Let $\dot A(\vec a)$ denote the set of vertices consisting of $A$ together with $a_i$ blocked vertices that lies on the $i$-th branch and let $A(\vec a)=\dot A(\vec a)-\{A\}$.
Every critical 1- and 2-cell can be represented by the following forms:
$$A_k(\vec a)\cup 0_s,\quad A_k(\vec a)\cup B_\ell(\vec b)\cup 0_s$$
where $A$ and $B$ are vertices of degree $\ge 3$ in $\Gamma$. Furthermore, since $s$ is uniquely determined by $s=n-1-|\vec a|$ or $n-(2+|\vec a|+|\vec b|)$, no confusion will occur even if we omit $0_s$ in the notation. Let $\vec a-1$ denote the vector obtained from $\vec a$ by subtracting 1 from the first positive entry. Then $\vec a-\alpha$ is defined recursively by $\vec a-(\alpha+1) = (\vec a-\alpha)-1$. Also we denote the index of the first nonzero entry of $\vec{a}$ by $p(\vec a)$, that is, $p(\vec a)=i$ iff $a_i$ is the first nonzero entry of $\vec a$. For $1\le k\le \mu$, let $(\vec a)_k=(a_1,\ldots,a_{k},0,\ldots,0)$ and $|\vec a|_k=a_1+\cdots+a_{k}$.

\subsection{Computation for cactus graph braid groups}\label{ss32:P1}

Given a cactus graph $\Gamma$, we always assume that we use a maximal tree $T$ and the numbering on vertices as given in \S\ref{ss31:DM}. Suppose that $A_k(\vec a)\cup B_\ell(\vec b)$ is a critical 2-cell in $UD_n \Gamma$ such that $A<B$. Here $k$ and $\ell$ are allowed to be negative integers to accommodate deleted edges. The path from $\iota(A_k)$ to the base vertex contains $A$ even if $A_k$ is a deleted edge and so we always have $A \leq B \wedge \iota(A_k)$. Thus there are four possibilities:
\begin{enumerate}
\item $A \wedge B < A$
\item $A \wedge B = A < B \wedge \iota(A_k) < B$
\item $A \wedge B = A = B \wedge \iota(A_k)$
\item $A \wedge B = A < B \wedge \iota(A_k) = B$
\end{enumerate}
In the rest of the article, we will frequently make analyses via these four cases. Note that the case (2) and (4) occur only when $k<0$, i.e. $A_k$ is a deleted edge. Both $A$ and $B\wedge\iota(A_k)$ lies on a cycle in the case (2) and both $A$ and $B$ lie on a cycle in the case (4).

Images under rewriting $\tilde r$ tend to be long and complicated and so we adapt
the following two notations.
For a vertex $A$ of degree $\ge3$, a vector $\vec a$ defined at $A$, and integers $1\le\ell\le\mu(A)$, 1$\le m\le n-|\vec a|$, let
$$\mathbf{A}(\vec a,\ell,m)=\tilde r(\prod^{|\vec a|-1}_{\alpha = 0} A(\alpha))$$
where $A(\alpha)=A_{p(\vec a-\alpha)}((\vec a -\alpha-1)+m\vec\delta_\ell)$. We observe a few immediate properties. If $\vec a=\vec 0$, then $\mathbf{A}(\vec a,\ell,m)=1$. If $A(\alpha)$ is not critical 1-cell, then $\ell\ge p(\vec a-\alpha)$ and it is collapsable and so $A(\alpha)=1$.  And if $A_k(\vec b)$ is a critical 1-cell in $\mathbf{A}(\vec a,1,1)$, then $(\vec b)_{k-1}=\vec\delta_1$.

For vertices $A,B$ of degree $\ge3$ such that $A<B$ and vectors $\vec a,\vec b$ defined at $A,B$, let
$$\mathbf{(B,A)}(\vec b,\vec a)=\tilde r(\prod^{|\vec b|-1}_{\alpha = 0}\mathbf{A}(\alpha)\cdot B(\alpha)\cdot(\mathbf{A}(\alpha))^{-1})$$
where $\mathbf{A}(\alpha)=\mathbf{A}(\vec a,g(A,B),|\vec b|+1-\alpha)$ and $B(\alpha)=B_{p(\vec b-\alpha)}((\vec b -\alpha-1)+\vec\delta_1)$. If $B(\alpha)$ is not a critical 1-cell, then $\tilde r(B(\alpha))=1$.

The following are less obvious.
\begin{pro}
    \begin{enumerate}
        \item If $\vec a=(\vec a)_{\ell}$ then $\mathbf{A}(\vec a,\ell,m)=1$
        \item If $\vec a=(\vec a)_{g(A,B)}$ then $\mathbf{(B,A)}(\vec b,\vec a)=\mathbf{B}(\vec b,1,1)$
        \item If $\vec a-(\vec a)_{\ell}=\vec v-(\vec v)_{\ell}$ then $\mathbf{A}(\vec a,\ell,m)=\mathbf{A}(\vec v,\ell,m)$
        \item If $\vec a-(\vec a)_{g(A,B)}=\vec v-(\vec v)_{g(A,B)}$ then $\mathbf{(B,A)}(\vec b,\vec a)=\mathbf{(B,A)}(\vec b,\vec v)$
      %  \item
    \end{enumerate}
\end{pro}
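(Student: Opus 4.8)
The plan is to reduce all four assertions to the observation recorded immediately above the proposition---a factor $A(\alpha)$ in the product defining $\mathbf{A}(\vec a,\ell,m)$ fails to be a critical $1$-cell exactly when $\ell\ge p(\vec a-\alpha)$, and in that case it is collapsable, so $\tilde r(A(\alpha))=1$---together with the fact that $\tilde r$ is performed one letter at a time and the rewriting of a single $1$-cell is independent of its neighbours in the word, so that $\tilde r$ distributes over concatenation and deleting a factor that rewrites to the empty word leaves $\tilde r$ of the whole product unchanged. For Part (1), assume $\vec a=(\vec a)_\ell$, so the support of $\vec a$ lies in $\{1,\dots,\ell\}$. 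For $0\le\alpha\le|\vec a|-1$ the vector $\vec a-\alpha$ has weight $|\vec a|-\alpha>0$ and its support is contained in that of $\vec a$, so $p(\vec a-\alpha)\le\ell$, $A(\alpha)$ is not critical, and $\tilde r(A(\alpha))=1$; hence $\mathbf{A}(\vec a,\ell,m)=\prod_{\alpha}\tilde r(A(\alpha))=1$. (The degenerate case $\vec a=\vec 0$, which also covers $\ell=g(A,B)=0$ below, is the empty product.)

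For Part (3), put $\vec w=\vec a-(\vec a)_\ell=\vec v-(\vec v)_\ell$. As in Part (1), every factor $A(\alpha)$ with $\alpha<|(\vec a)_\ell|$ satisfies $p(\vec a-\alpha)\le\ell$ and rewrites to $1$, since iterating $\vec a\mapsto\vec a-1$ first removes exactly $|(\vec a)_\ell|$ units from the first $\ell$ coordinates before touching the later ones. For $\alpha\ge|(\vec a)_\ell|$ one has $\vec a-\alpha=\vec w-\beta$ with $\beta=\alpha-|(\vec a)_\ell|$, and since $\vec w$ is supported in $\{\ell+1,\dots,\mu\}$ the $\ell$-th coordinate of $(\vec w-\beta-1)+m\vec\delta_\ell$ is exactly $m$; thus the $\beta$-th surviving factor equals $A_{p(\vec w-\beta)}\!\left((\vec w-\beta-1)+m\vec\delta_\ell\right)$, a word depending only on $\vec w$, $\ell$, $m$. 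The identical description holds with $\vec v$ in place of $\vec a$ (the survivors now begin at $\alpha=|(\vec v)_\ell|$, but re-indexed by $\beta=0,\dots,|\vec w|-1$ the product of survivors is literally the same word). Applying $\tilde r$ gives $\mathbf{A}(\vec a,\ell,m)=\mathbf{A}(\vec v,\ell,m)$.

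For Parts (2) and (4), recall that in $\mathbf{(B,A)}(\vec b,\vec a)$ we have $\mathbf{A}(\alpha)=\mathbf{A}(\vec a,g(A,B),|\vec b|+1-\alpha)$ with $|\vec b|+1-\alpha\ge1$, while $B(\alpha)=B_{p(\vec b-\alpha)}((\vec b-\alpha-1)+\vec\delta_1)$ does not involve $\vec a$. In case (2), the hypothesis $\vec a=(\vec a)_{g(A,B)}$ lets us apply Part (1) with $\ell=g(A,B)$ to get $\mathbf{A}(\alpha)=1$ for every $\alpha$, so the defining product collapses to $\prod_{\alpha=0}^{|\vec b|-1}B(\alpha)$, which is precisely the product defining $\mathbf{B}(\vec b,1,1)$; hence $\mathbf{(B,A)}(\vec b,\vec a)=\mathbf{B}(\vec b,1,1)$. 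In case (4), the hypothesis $\vec a-(\vec a)_{g(A,B)}=\vec v-(\vec v)_{g(A,B)}$ lets us apply Part (3) with $\ell=g(A,B)$, so each $\mathbf{A}(\alpha)$ is the same word whether built from $\vec a$ or from $\vec v$; since the $B(\alpha)$ are unaffected, the two defining products coincide and $\mathbf{(B,A)}(\vec b,\vec a)=\mathbf{(B,A)}(\vec b,\vec v)$.

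The only genuinely delicate step is the index bookkeeping in Part (3): one must check that iterating $\vec a\mapsto\vec a-1$ exhausts $(\vec a)_\ell$ in exactly $|(\vec a)_\ell|$ steps before altering any coordinate after position $\ell$, and that from that point on the sequence of vectors---together with the cells $A(\alpha)$ they index, including the fixed summand $m\vec\delta_\ell$---is determined by $\vec w$ alone and is insensitive to the part of $\vec a$ already used up. Once this is in place, Parts (1), (2) and (4) follow formally.
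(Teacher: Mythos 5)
Your argument is correct and follows essentially the same route as the paper: (1) by noting $p(\vec a-\alpha)\le\ell$ forces each $A(\alpha)$ to be non-critical and hence collapse, (3) by discarding the collapsed initial factors and matching the surviving factors, which depend only on $\vec a-(\vec a)_\ell$, and (2), (4) by feeding (1), (3) into the definition of $\mathbf{(B,A)}(\vec b,\vec a)$. Your explicit index bookkeeping in (3) just spells out the re-indexing the paper performs in one line.
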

\begin{proof}
(1) $\vec a=(\vec a)_\ell$ implies that $p(\vec a-\alpha)\le \ell$ and so $A(\alpha)$ for all $\alpha$ is not a critical 1-cell. Thus $\tilde r(A(\alpha))=1$ and so $\mathbf{A}(\vec a,\ell,m)=1$

(2) Since $\mathbf{A}(\alpha)=1$ for all $\alpha$,
$$\mathbf{(B,A)}(\vec b,\vec a)=\tilde r(\prod^{|\vec b|-1}_{\alpha = 0}\mathbf{A}(\alpha) \cdot B(\alpha)\cdot (\mathbf{A}(\alpha))^{-1})=\tilde r(\prod^{|\vec b|-1}_{\alpha = 0} B(\alpha))=\mathbf{B}(\vec b,1,1).$$

(3) If $p(\vec a-\alpha)\le \ell$ then $A(\alpha)$ is not a critical 1-cell. So
\begin{align*}
\mathbf{A}(\vec a,\ell,m)&=\tilde r(\prod^{|\vec a|-1}_{\alpha = 0} A(\alpha))=\tilde r(\prod^{|\vec a|-1}_{\alpha = |\vec a|_\ell-1} A(\alpha))\\
&=\tilde r(\prod^{|\vec v|-1}_{\alpha = |\vec v|_\ell-1} A(\alpha))=\tilde r(\prod^{|\vec v|-1}_{\alpha = 0} A(\alpha))=\mathbf{A}(\vec v,\ell,m)
\end{align*}

(4) Since $\mathbf{A}(\alpha)=\mathbf{A}(\vec a,\ell,|\vec b|+1-\alpha)=\mathbf{A}(\vec v,\ell,|\vec b|+1-\alpha)$ for all $\alpha$ where $\ell=g(A,B)$. So
$$\mathbf{(B,A)}(\vec b,\vec a)=\tilde r(\prod^{|\vec b|-1}_{\alpha = 0}\mathbf{A}(\alpha) \cdot B(\alpha)\cdot (\mathbf{A}(\alpha))^{-1})=\mathbf{(B,A)}(\vec b,\vec v).$$
\end{proof}

\begin{lem}\label{rel1}
Let $\Gamma$ be a cactus graph. Let $c$ be a critical 2-cell in $UD_n\Gamma$ of the form $A_k(\vec a)\cup B_\ell(\vec b)$ with $A<B$. Then
    \begin{enumerate}
        \item If $A \wedge B < A$,
            $$\tilde r(\partial c)=[B_\ell(\vec b),\omega \cdot A_k(\vec a)\cdot \omega^{-1}]$$
            where $X=A \wedge B$ and $\omega= \mathbf{X}((|\vec b|+1)\vec\delta_{g(X,B)},g(X,A),|\vec a|+1)$.
        \item If $A \wedge B = A < B \wedge \iota(A_k) < B$,
            $$\tilde r(\partial c)=[B_\ell(\vec b),\omega^{-1}\cdot \gamma\cdot A_k(\vec a+(|\vec b|+1)\vec\delta_{|k|})\cdot\omega]$$
            where $\omega=\mathbf{A}(\vec a,|k|,|\vec b|+1)$ and $\gamma=\mathbf{C}((|\vec b|+1)\vec\delta_{g(C,B)},1,1)$ for $C=B \wedge \iota(A_k)$.
        \item If $A \wedge B = A = B \wedge \iota(A_k)$,
            $$\tilde r(\partial c)=[B_\ell(\vec b),\omega_1^{-1}\cdot A_k(\vec a+(|\vec b|+1)\vec\delta_{g(A,B)})\cdot\omega_2]$$
            where $\omega_1=\mathbf{A}(\vec a+\vec\delta_{|k|},g(A,B),|\vec b|+1)$ and $\omega_2=\mathbf{A}(\vec a,g(A,B),|\vec b|+1)$.
        \item If $A \wedge B = A < B \wedge \iota(A_k) = B$, then $\tilde r(\partial c)$ gives the relation
            \begin{align*}
            &\:\omega_1\cdot B_\ell(\vec b+\vec\delta_1)\cdot\omega_1^{-1}\\
            =&\:\beta_1\cdot A_k(\vec a+(|\vec b|+1)\vec\delta_{|k|})\cdot\omega_2 \cdot B_\ell(\vec b)\cdot\omega_2^{-1}\cdot A_k(\vec a+(|\vec b|+1)\vec\delta_{|k|})^{-1}\cdot\beta_2^{-1}
            \end{align*}
            where $\omega_1=\mathbf{A}(\vec a,|k|,|\vec b|+2)$,  $\omega_2=\mathbf{A}(\vec a,|k|,|\vec b|+1)$, $\beta_2=(\mathbf{B},\mathbf{A})(\vec b,\vec a)$ and
            $\beta_1=(\mathbf{B},\mathbf{A})(\vec b+\vec\delta_{|\ell|},\vec a)$.
    \end{enumerate}
\end{lem}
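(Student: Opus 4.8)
The plan is to unwind the combinatorial boundary of the 2-cube $c$ and then follow the rewriting operator $\tilde r$ through each of the four configurations. Writing $e=A_k$ and $f=B_\ell$ for the two edges of $c$, the boundary $\partial c$ is the alternating concatenation of the four 1-cells obtained from $c$ by replacing $f$ by $\iota(f)$, replacing $e$ by $\tau(e)$, replacing $f$ by $\tau(f)$, and replacing $e$ by $\iota(e)$; hence $\tilde r(\partial c)$ is the corresponding alternating product of the four rewritten faces, and the entire content of the lemma is to identify these words and observe that they telescope into the stated conjugated commutator or, in case~(4), into the displayed asymmetric relation.

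The technical heart is a single sliding principle: when a 1-cell consists of a packet of blocked vertices on the branches of a fork $A$ together with a moving edge (or vertex) that must pass that packet, each application of the rewriting rule peels off the lowest relevant blocked vertex and contributes one new factor of the shape $A(\alpha)$, so that $\tilde r$ turns the 1-cell into exactly the word $\mathbf{A}(\vec a,\ell,m)$; and when the packet being slid is itself anchored at a second fork $B>A$ along the way, the corresponding conjugated word is $\mathbf{(B,A)}(\vec b,\vec a)$. Lemma~\ref{lem:red1} is what lets us recognise or discard the auxiliary 1-cells that appear alongside at each step, while properties (T1)--(T4) guarantee that the intermediate cells genuinely have the blocked/unblocked and order-respecting structure the pattern requires. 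Granting this principle, each case becomes bookkeeping.

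In case~(1), $X=A\wedge B<A$, so by (T4) every arc between $A$ and $B$ runs through $X$; reconciling the $A_k$-packet and the $B_\ell$-packet forces the rewriting to drag vertices up to the fork $X$ and redistribute them along branch $g(X,A)$, which is exactly the conjugator $\omega=\mathbf{X}\bigl((|\vec b|+1)\vec\delta_{g(X,B)},g(X,A),|\vec a|+1\bigr)$, and the four rewritten faces then collapse, after the cancellations dictated by the sliding principle, to $[B_\ell(\vec b),\omega A_k(\vec a)\omega^{-1}]$. Cases~(2) and~(3) play out the same way with $A_k$ a deleted edge whose initial vertex lies on a cycle through $A$: sliding the $B_\ell$-packet around now absorbs its $|\vec b|+1$ vertices into the cycle-ward coordinate of $\vec a$ (the $|k|$-th, resp.\ the $g(A,B)$-th), and the conjugators become the $\mathbf{A}$- and $\mathbf{C}$-words for $C=B\wedge\iota(A_k)$ exactly as stated; one still checks that the two faces carrying the edge $B_\ell$ agree up to this conjugation. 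Case~(4) is the genuinely different one: here $A$ and $B$ both lie on a common cycle, the deleted-edge endpoint $\iota(A_k)$ interacts with $B$ during the rewriting, and the packet at $B$ is itself reorganised, so the face in which $f$ has been replaced by $\iota(f)$ no longer reduces to a bare $B_\ell(\vec b+\vec\delta_1)$; instead the two sides of the relation inherit the distinct conjugators $\beta_1$ and $\beta_2$ displayed.

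The main obstacle I expect is case~(4), together with the uniform verification that every intermediate 1-cell occurring along the rewriting has precisely the structure making the $\mathbf{A}$- and $\mathbf{(B,A)}$-patterns apply verbatim rather than producing longer words. This is exactly where (T1)--(T3) --- initial vertices of deleted edges have degree $2$, no two deleted edges share a terminal vertex within the same branch, and branch index $1$ is forced all along a cycle path --- become indispensable, since they rule out the collisions that would otherwise break termination in the claimed normal form.
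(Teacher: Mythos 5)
Your overall strategy coincides with the paper's: write $\partial c$ as the alternating product of the four faces of the square, rewrite each face with $\tilde r$ using Lemma~\ref{lem:red1} and properties (T1)--(T4), and recognize the outputs as $\mathbf{A}$- and $\mathbf{(B,A)}$-words. The genuine gap is that the ``sliding principle'' you grant is not an auxiliary fact --- it \emph{is} the content of the lemma. The precise quantitative claims, namely that $\tilde r(A_k(\vec a)\cup\dot B(\vec b))$ equals $\omega\cdot A_k(\vec a)\cdot\omega^{-1}$ with the specific conjugator $\mathbf{X}((|\vec b|+1)\vec\delta_{g(X,B)},g(X,A),|\vec a|+1)$ in case (1); that in cases (2)--(4) the $B$-side vertices are absorbed as $\vec a+(|\vec b|+1)\vec\delta_{|k|}$; that the two faces carrying the edge $B_\ell$ acquire the conjugators $\mathbf{A}(\vec a,|k|,|\vec b|+1)$ versus $\mathbf{A}(\vec a+\vec\delta_{|k|},\cdot,\cdot)$ or $\mathbf{A}(\vec a,|k|,|\vec b|+2)$ depending on where $\iota(A_k)$ comes to rest; and that in case (4) the two faces carrying $A_k$ produce $(\mathbf{B},\mathbf{A})(\vec b,\vec a)$ and $(\mathbf{B},\mathbf{A})(\vec b+\vec\delta_{|\ell|},\vec a)$ --- are exactly what the paper proves, face by face, by induction on $|\vec b|$, unfolding one rewriting step $c\mapsto c_1c_2c_3^{-1}$ at a time and checking which intermediate cells collapse. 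Saying ``granting this principle, each case becomes bookkeeping'' therefore assumes the conclusion; a complete argument must carry out these inductions (or an equivalent explicit computation), since the formulas being proved are precisely that bookkeeping.

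In addition, your account of case (4) mislocates the asymmetry. The face in which $f=B_\ell$ is replaced by $\iota(B_\ell)$ contains the edge $A_k$, so it can never reduce to any $B_\ell$-cell; in the paper it rewrites to $\beta_1\cdot A_k(\vec a+(|\vec b|+1)\vec\delta_{|k|})$ while the face $A_k(\vec a)\cup\dot B(\vec b)$ gives $\beta_2\cdot A_k(\vec a+(|\vec b|+1)\vec\delta_{|k|})$, and it is the discrepancy $\beta_1\ne\beta_2$, together with $\iota(A_k)$ sliding down to $B$ and turning $B_\ell(\vec b)$ into $B_\ell(\vec b+\vec\delta_1)$ on the face where $e=A_k$ is replaced by $\iota(A_k)$ (which still reduces to a conjugate $\omega_1 B_\ell(\vec b+\vec\delta_1)\omega_1^{-1}$), that destroys the commutator form. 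Identifying exactly these two sources of asymmetry is what the $(\mathbf{B},\mathbf{A})$-notation and the induction are designed for, so this is the point where your outline most needs to be filled in before it constitutes a proof.
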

\begin{proof}
The boundary $\partial(c)$ of the 2-cell $c=A_k(\vec a)\cup B_\ell(\vec b)$ is given by
$$A_k(\vec a)\cup B(\vec b)\cup\{\iota(B_\ell)\}\cdot \dot A(\vec a)\cup B_\ell(\vec b)\cdot(A_k(\vec a)\cup \dot B(\vec b))^{-1}\cdot(A(\vec a)\cup B_\ell(\vec b)\cup\{\iota(A_k)\})^{-1}.$$

\noindent (1) By applying Lemma~\ref{lem:red1} repeatedly to the smallest unblocked vertices, the first and the third terms under $\tilde r$ are
$$\tilde r(A_k(\vec a)\cup B(\vec b)\cup\{\iota(B_\ell)\})=\tilde r(A_k(\vec a)\cup\dot B(\vec b))=\tilde r(A_k(\vec a)\cup X((|\vec b|+1)\vec\delta_\nu))$$
where $\nu=g(X,B)$. Let $\mu=g(X,A)$. %??? \mu is not used.
By the definition of $\tilde r$ and the induction on $|\vec b|$, we have
\begin{align*}
\tilde r(A_k(\vec a)\cup X((|\vec b|+1)\vec\delta_\nu))=&\ \tilde r(A(\vec a)\cup X_\nu(|\vec b|\vec\delta_\nu)\cup\{\iota(A_k)\})\cdot\tilde r(A_k(\vec a)\cup\dot X(|\vec b|\vec\delta_\nu))\\
&\cdot \tilde r\{(\dot A(\vec a)\cup X_\nu(|\vec b|\vec\delta_\nu))^{-1}\} \\
=&\ (X_\nu(|\vec b|\vec\delta_\nu+(|\vec a|+1)\vec\delta_1))\cdot\tilde r(A_k(\vec a)\cup X(|\vec b|\vec\delta_\nu))\\
&\cdot(X_\nu(|\vec b|\vec\delta_\nu+(|\vec a|+1)\vec\delta_1))^{-1} \\
=&\ \omega\cdot A_k(\vec a)\cdot \omega^{-1}
\end{align*}

Similarly we can rewrite the second and the fourth terms as follows:
\begin{align*}
\tilde{r}(\dot A(\vec{a}) \cup B_\ell(\vec{b})) = \tilde{r}( A(\vec{a}) \cup B_\ell(\vec{b})\cup \{\iota(A_k) \}) = B_\ell(\vec b).
\end{align*}

\noindent(2) $A < B\wedge\iota(A_k)$ implies that $A_k$ is a deleted edge and so $k\le0$. Moreover $g(A,B)=|k|$ and $g(C,\iota(A_k))=1$. By applying Lemma~\ref{lem:red1} repeatedly to the smallest unblocked vertices, we have
$$\tilde{r}(A(\vec{a}) \cup B_\ell(\vec{b}) \cup \{\iota(A_k)\}) = \tilde{r}(\dot A(\vec{a}) \cup B_\ell(\vec{b}) ) = \tilde{r} (B_\ell(\vec{b}) \cup A(\vec a-(\vec a)_{|k|})).$$
Moreover,
\begin{align*}
\tilde{r} (B_\ell(\vec{b}) \cup A(\vec v)) &=\mathbf{A}(\vec v,|k|,|\vec b|+1)\cdot B_\ell(\vec{b}) \cdot(\mathbf{A}(\vec v,|k|,|\vec b|+1))^{-1}\\
&=\omega \cdot B_\ell(\vec{b}) \cdot \omega^{-1}
\end{align*}
where $\vec v = \vec a-(\vec a)_{|k|}$,
since $\mathbf{A}(\vec v,|k|,|\vec b|+1)=\mathbf{A}(\vec a,|k|,|\vec b|+1)$.

Let $\nu=g(C,B)$. By Lemma~\ref{lem:red1} we see
$$\tilde r(A_k(\vec a)\cup B(\vec b)\cup\{\iota(B_\ell)\})=\tilde r(A_k(\vec a)\cup\dot B(\vec b))=\tilde r(A_k(\vec a)\cup C((|\vec b|+1)\vec\delta_\nu)).$$
By the definition of $\tilde r$ and the induction on $|\vec b|$, we have
\begin{align*}
\tilde r(A_k(\vec a)\cup C((|\vec b|+1)\vec\delta_\nu))=&\ \tilde r(A(\vec a)\cup C_\nu(|\vec b|\vec\delta_\nu)\cup\{\iota(A_k)\})\cdot\tilde r(A_k(\vec a)\cup\dot C(|\vec b|\vec\delta_\nu))\\
&\cdot \tilde r\{(\dot A(\vec a)\cup C_\nu(|\vec b|\vec\delta_\nu))^{-1}\} \\
=&(C_\nu(|\vec b|\vec\delta_\nu+\vec\delta_1))\cdot\tilde r(A_k(\vec a+\vec\delta_{|k|})\cup C(|\vec b|\vec\delta_\nu))\\
=&\mathbf{C}((|\vec b|+1)\vec\delta_\nu,1)\cdot A_k(\vec a+(|\vec b|+1)\vec\delta_{|k|})
\end{align*}

\noindent(3) Let $\nu=g(A,B)$. Lemma~\ref{lem:red1} implies
$$\tilde r(A_k(\vec a)\cup B(\vec b)\cup\{\iota(B_\ell)\})=\tilde r(A_k(\vec a)\cup\dot B(\vec b))=A_k(\vec a+(|\vec b|+1)\vec\delta_\nu).$$

For the other two terms, we compute as in the case of (2) and obtain
\begin{align*}
 \tilde{r}(\dot A(\vec{a}) \cup B_\ell(\vec{b}) ) &= \tilde{r} (B_\ell(\vec{b}) \cup A(\vec a-(\vec a)_\nu))= \omega_2\cdot B_\ell(\vec b) \cdot \omega_2^{-1}\\
 \tilde{r}(A(\vec{a}) \cup B_\ell(\vec{b})\cup\{\iota(A_k) \}) &= \tilde{r} (B_\ell(\vec{b}) \cup A(\vec a+\vec\delta_{|k|}-(\vec a+\vec\delta_{|k|})_\nu))= \omega_1 \cdot B_\ell(\vec b) \cdot \omega_1^{-1}
\end{align*}

\noindent(4) The hypothesis implies $k\le 0$ and $g(A, B) = |k|$. Then
\begin{align*}
\tilde{r} (\dot A(\vec{a}) \cup B_\ell(\vec{b}) ) &=\mathbf{A}(\vec a,|k|,|\vec b|+1)\cdot B_\ell(\vec b)\cdot (\mathbf{A}(\vec a, |k|,|\vec b|+1))^{-1}\\
\tilde{r} ( A(\vec{a}) \cup B_\ell(\vec{b})\cup\{\iota(A_k) \} ) &=\tilde r (A(\vec a)\cup B_\ell(\vec b+\vec\delta_1))\\
&=\mathbf{A}(\vec a,|k|,|\vec b|+2)\cdot B_\ell(\vec b+\vec\delta_1)\cdot(\mathbf{A}(\vec a,|k|,|\vec b|+2))^{-1}.
\end{align*}

By induction on $|\vec b|$ and $\tilde r(A(\vec a)\cup B_\ell(\vec b))$, we have
\begin{align*}
\tilde r (A_k(\vec a)\cup B(\vec b))&=\tilde r (A(\vec a)\cup B_{p(\vec b)}((\vec b-1)+\vec\delta_1))\cdot\tilde r(A_k(\vec a+\vec\delta_{|k|})\cup B(\vec b-1))\\
&= \mathbf{(B,A)}(\vec b,\vec a)\cdot A_k(\vec a+|\vec b|\vec\delta_{|k|}))
\end{align*}

And so
\begin{align*}
\tilde{r} (A_k(\vec{a}) \cup B(\vec{b})\cup\{\iota(B_\ell) \})&=\tilde r (A_k(\vec a)\cup B(\vec b+\vec\delta_{|\ell|}))\\
&=\mathbf{(B,A)}(\vec b+\vec\delta_{|\ell|},\vec a)\cdot A_k(\vec a+(|\vec b|+1)\vec\delta_{|k|}),\\
\tilde{r} (A_k(\vec{a}) \cup \dot B(\vec{b}))&=\tilde r (A_k(\vec a+\vec\delta_{|k|})\cup B(\vec b))\\
&=\mathbf{(B,A)}(\vec b,\vec a)\cdot  A_k(\vec a+(|\vec b|+1)\vec\delta_{|k|}).
\end{align*}

Therefore $\tilde{r} (\partial c)$ is no longer a commutator:
\begin{align*}
\tilde{r} (\partial c) =  &\omega_1 \cdot B_\ell(\vec b+\vec\delta_1)\cdot \omega_1^{-1} \cdot\beta_2\cdot A_k(\vec a+(|\vec b|+1)\vec\delta_{|k|})\cdot\omega_2\cdot B_\ell(\vec b)^{-1}\omega_2^{-1}\\
&\cdot A_k(\vec a+(|\vec b|+1)\vec\delta_{|k|})^{-1}\cdot\beta_1^{-1}
\end{align*}
which gives the relation
\begin{align*}
\omega_1\cdot& B_\ell(\vec b+\vec\delta_1)\cdot\omega_1^{-1}\\
=&\beta_1\cdot A_k(\vec a+(|\vec b|+1)\vec\delta_{|k|})\cdot\omega_2 \cdot B_\ell(\vec b)\cdot\omega_2^{-1}\cdot A_k(\vec a+(|\vec b|+1)\vec\delta_{|k|})^{-1}\cdot\beta_2^{-1}
\end{align*}

\end{proof}

The following lemma is useful for simplifying relators in the next chapter.
\begin{lem}\label{lem:AB}
Let $A$ and $B$ be vertices with $A<B$ and satisfy $A\wedge B=A<B$. Then we have
\begin{align*}
&{\mathbf{A} (\vec{a}, k, |\vec b|+2)} = (\mathbf{(B, A)} (\vec{b}, \vec{a}))^{-1}\cdot \mathbf{A} (\vec{a}, k, |\vec b|+2)\cdot \mathbf{B} (\vec{b}, 1,1) .%\\
%&= {\mathbf{B} (\vec{b} + \vec{\delta_{|\ell|}} - 1, 1,1)}^{-1} {\mathbf{A} (\vec{a}, |k|, |\vec b|+2)}^{-1} \mathbf{(B, A)} (\vec{b} + \vec{\delta_{|\ell|}} -1, \vec{a}, |k|)
\end{align*}
where $\vec a$ and $\vec b$ are vectors defined at $A$ and $B$ respectively and $k=g(A,B)$.
\end{lem}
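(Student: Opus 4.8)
The plan is to prove the equivalent identity
$$\mathbf{(B,A)}(\vec b,\vec a)=\mathbf{A}(\vec a,k,|\vec b|+2)\cdot\mathbf{B}(\vec b,1,1)\cdot\mathbf{A}(\vec a,k,|\vec b|+2)^{-1},\qquad k=g(A,B),$$
by induction on $|\vec b|$. When $\vec b=\vec 0$ both $\mathbf{B}(\vec 0,1,1)$ and $\mathbf{(B,A)}(\vec 0,\vec a)$ are empty products, so there is nothing to prove. For the inductive step one uses that $\tilde r$ distributes over a concatenation of words already written in critical $1$-cells, so that the defining product of $\mathbf{(B,A)}(\vec b,\vec a)$ splits off its $\alpha=0$ factor $\mathbf{A}(\vec a,k,|\vec b|+1)\cdot\tilde r(B(0))\cdot\mathbf{A}(\vec a,k,|\vec b|+1)^{-1}$. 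Reindexing $\alpha\mapsto\alpha-1$ in the remaining factors and using $\vec b-\alpha=(\vec b-1)-(\alpha-1)$ together with $|\vec b|+1-\alpha=|\vec b-1|+1-(\alpha-1)$ identifies the tail with $\mathbf{(B,A)}(\vec b-1,\vec a)$; the same splitting applied to $\mathbf{B}$ gives $\mathbf{B}(\vec b,1,1)=\tilde r(B(0))\cdot\mathbf{B}(\vec b-1,1,1)$. Substituting the induction hypothesis for $\vec b-1$, whose conjugator is $\mathbf{A}(\vec a,k,|\vec b-1|+2)=\mathbf{A}(\vec a,k,|\vec b|+1)$, and cancelling, one obtains
$$\mathbf{(B,A)}(\vec b,\vec a)=\mathbf{A}(\vec a,k,|\vec b|+1)\cdot\mathbf{B}(\vec b,1,1)\cdot\mathbf{A}(\vec a,k,|\vec b|+1)^{-1}.$$

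It then remains to replace the conjugator $\mathbf{A}(\vec a,k,|\vec b|+1)$ by $\mathbf{A}(\vec a,k,|\vec b|+2)$, i.e. to show that $\mathbf{A}(\vec a,k,|\vec b|+1)^{-1}\mathbf{A}(\vec a,k,|\vec b|+2)$ centralizes $\mathbf{B}(\vec b,1,1)$. By parts (3) and (4) of the Proposition preceding Lemma~\ref{rel1} one may assume $\vec a=\vec a-(\vec a)_k$, i.e. $a_1=\dots=a_k=0$; then $p(\vec a-\gamma)>k$ for every $\gamma$ while the first nonzero coordinate of $(\vec a-\gamma-1)+m\vec\delta_k$ lies in position $k$, so each $A_{p(\vec a-\gamma)}((\vec a-\gamma-1)+m\vec\delta_k)$ is already critical and $\mathbf{A}(\vec a,k,m)=\prod_{\gamma=0}^{|\vec a|-1}A_{p(\vec a-\gamma)}((\vec a-\gamma-1)+m\vec\delta_k)$ with no further rewriting; likewise $\mathbf{B}(\vec b,1,1)$ is the explicit product of the critical cells $B_{p(\vec b-\beta)}((\vec b-\beta-1)+\vec\delta_1)$ over those $\beta$ with $p(\vec b-\beta)\ge 2$. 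Passing from $m=|\vec b|+1$ to $m=|\vec b|+2$ adds one blocked vertex on the $k$-th branch of $A$ — the branch pointing toward $B$ — in each $A$-factor; one then checks, by feeding each pair (one $A$-factor, one $B$-factor) into the commutator relations supplied by Lemma~\ref{rel1}(1) and (3), that this extra vertex can be pushed along the branch past $B$ without altering any of those commutators, so that it cancels under the conjugation.

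The last step is the genuine obstacle. It is tempting but wrong to argue that $\mathbf{A}(\vec a,k,m)$ commutes with $\mathbf{B}(\vec b,1,1)$ outright: if it did, the displayed identity would force $\mathbf{(B,A)}(\vec b,\vec a)=\mathbf{B}(\vec b,1,1)$ for every $\vec a$, contradicting the fact — visible already in Lemma~\ref{rel1}(4), where $\beta_2=\mathbf{(B,A)}(\vec b,\vec a)$ — that $\mathbf{(B,A)}$ genuinely differs from $\mathbf{B}$. What must be extracted is the finer statement that the dependence of $\mathbf{A}(\vec a,k,m)$ on $m$ is invisible to conjugation by $\mathbf{B}(\vec b,1,1)$; making this precise requires tracking exactly which critical cells occur once the $k$-branch chain of $\vec a$ reaches $B$, and combining the subdivision hypotheses with the relations of Lemma~\ref{rel1} to see that the final particle of that chain is absorbed.
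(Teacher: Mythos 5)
Your reduction is fine as far as it goes: splitting off the $\alpha=0$ factor of $\mathbf{(B,A)}(\vec b,\vec a)$ and of $\mathbf{B}(\vec b,1,1)$ and invoking the statement for $\vec b-1$ does give $\mathbf{(B,A)}(\vec b,\vec a)=\mathbf{A}(\vec a,k,|\vec b|+1)\cdot\mathbf{B}(\vec b,1,1)\cdot\mathbf{A}(\vec a,k,|\vec b|+1)^{-1}$. But the step you yourself flag as ``the genuine obstacle'' --- that $\mathbf{A}(\vec a,k,|\vec b|+1)^{-1}\mathbf{A}(\vec a,k,|\vec b|+2)$ centralizes $\mathbf{B}(\vec b,1,1)$ --- is not an auxiliary verification; it is essentially the whole content of the lemma, and your sketch (``one then checks \dots that this extra vertex can be pushed along the branch past $B$'') never identifies which relators accomplish the pushing or in which group the centralizing is supposed to hold. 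Note that the identity of the lemma is \emph{not} an identity of words in the free group on critical $1$-cells: it only holds modulo the relators $\tilde r(\partial c)$, and the available commutation relators from Lemma~\ref{rel1}(3) have the form $[B_{p(\vec b)}((\vec b-1)+\vec\delta_1),\,\mathbf{A}(\vec a,k,|\vec b|+1)^{-1}\cdot A_{p(\vec a)}((\vec a-1)+(|\vec b|+1)\vec\delta_k)\cdot\mathbf{A}(\vec a-1,k,|\vec b|+1)]$, i.e.\ each $B$-cell commutes only with a very specific conjugated $A$-cell whose vector depends on $|\vec b|$. Extracting your centralizer statement from these relators requires exactly the kind of bookkeeping you postpone, so the induction on $|\vec b|$ alone does not close.

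The paper avoids this by a double induction on $(|\vec a|,|\vec b|)$ in lexicographic order, peeling off the leading factor $A(0)=A_{p(\vec a)}((\vec a-1)+(|\vec b|+2)\vec\delta_k)$ of $\mathbf{A}(\vec a,k,|\vec b|+2)$ rather than the leading factor of $\mathbf{B}$. The problem then reduces to showing $(\mathbf{(B,A)}(\vec b,\vec a))^{-1}\cdot\tilde r(A(0))\cdot\mathbf{(B,A)}(\vec b,\vec a-1)=\tilde r(A(0))$; when $A(0)$ is collapsible this is Proposition (4), and when $A(0)$ is critical one peels one $B$-factor off each $\mathbf{(B,A)}$ and cancels it using precisely the relator of the critical $2$-cell $A_{p(\vec a)}((\vec a-1)+\vec\delta_k)\cup B_{p(\vec b)}((\vec b-1)+\vec\delta_1)$ from Lemma~\ref{rel1}(3), descending in $|\vec b|$; the base case $|\vec a|=0$ is Proposition (2). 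If you want to salvage your route, you would have to prove your centralizer claim by a comparable double induction using those same $2$-cell relators --- at which point you have reproduced the paper's argument in a less direct form. As written, the proposal has a genuine gap at its decisive step.
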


\begin{proof}
We use the induction on $(|\vec a|,|\vec b|)$ with the lexicographical order. Suppose $|\vec a|=0$. Then $\mathbf{A} (\vec{a}, k, |\vec b|+2)=1$ and $\mathbf{(B, A)} (\vec{b}, \vec{a})=\mathbf{B} (\vec{b}, 1,1)$. Thus the formula holds for any $|\vec b|$.

Let $\mathbf{(B, A)} (|\vec a|,|\vec b|)=\mathbf{(B, A)} (\vec{b}, \vec{a})$, $\mathbf{A}(|\vec a|,|\vec b|)=\mathbf{A} (\vec{a}, k, |\vec b|+2)$, and $A(0)=A_{p(\vec a)}((\vec a-1)+(|\vec b|+2)\vec\delta_{k})$.
Now suppose that $|\vec a|=m+1$ and $|\vec b|=n+1$.
Since $\mathbf{A}(m+1,n+1)=\tilde r(A(0))\cdot\mathbf{A}(m,n+1)$, we have
\begin{align*}
\mathbf{(B, A)}& (m+1,n+1)^{-1}\cdot \mathbf{A} (m+1,n+1)\cdot \mathbf{B} (\vec{b}, 1,1) \\
=&\mathbf{(B, A)} (m+1,n+1)^{-1}\cdot \tilde r(A(0))\cdot\mathbf{(B, A)} (m,n+1)\\
&\cdot\mathbf{(B, A)} (m,n+1)^{-1}\cdot \mathbf{A} (m,n+1)\cdot \mathbf{B} (\vec{b}, 1,1)\\
=&\mathbf{(B, A)} (m+1,n+1)^{-1}\cdot \tilde r(A(0))\cdot\mathbf{(B, A)} (m,n+1) \cdot \mathbf{A}(m,n+1)
%=&\mathbf{B} (\vec{b}, 1,1)^{-1}{\mathbf{A} (\vec{a}-1, k, |\vec b|+2)}^{-1}\mathbf{(B, A)} (\vec{b}, \vec{a}-1, k)\\
%&\mathbf{(B, A)} (\vec{b}, \vec{a}-1, k)^{-1}r(A_{p(\vec a)}((\vec a-1)+(|\vec b|+2)\vec\delta_{k}))^{-1}\mathbf{(B, A)} (\vec{b}, \vec{a}, k)
\end{align*}
It is sufficient to show that
$$(\mathbf{(B, A)} (m+1,n+1))^{-1}\cdot \tilde r(A(0))\cdot\mathbf{(B, A)} (m,n+1) =\tilde r(A(0)).$$

If $A(0)$ is not a critical 1-cell then $\tilde r(A(0))=1$ and $p(\vec{a}) \leq k $. Since $\vec a-(\vec a)_k=(\vec a-1)-(\vec a-1)_k$, $\mathbf{(B, A)} (\vec{b}, \vec{a})=\mathbf{(B, A)} (\vec{b}, \vec{a}-1)$ and we are done.

Assume that $A(0)$ is a critical 1-cell. If $|\vec b|=0$,
$$\mathbf{(B, A)} (\vec{b}, \vec{a}-1)=\mathbf{(B, A)} (\vec{b}, \vec{a})=1$$  and we are done. If $|\vec b|=n+1$,
\begin{align*}
(\mathbf{(B, A)}& (m+1,n+1))^{-1}\cdot\tilde r(A(0))\cdot\mathbf{(B, A)} (m,n+1)\\
 =& (\mathbf{(B, A)} (m+1,n))^{-1}\cdot\mathbf{A} (m+1,n)\cdot\tilde r(B(0))^{-1}\cdot (\mathbf{A} (m+1,n))^{-1}\\
 &\cdot\tilde r(A(0))\cdot\mathbf{A} (m,n)\cdot\tilde r(B(0))\cdot (\mathbf{A} (m,n))^{-1}\cdot \mathbf{(B, A)} (m,n)
\end{align*}
where $B(0)=B_{p(\vec b)}((\vec b-1)+\vec\delta_1)$.

If $B(0)$ is not a critical 1-cell then $\tilde r(B(0))=1$ and by induction we are done.
If $B(0)$ is a critical 1-cell then $c'=A_{p(\vec a)}((\vec a-1)+\vec\delta_{k})\cup B_{p(\vec b)}((\vec b-1)+\vec\delta_1)$ is a critical 2-cell and
$c'$ satisfies the condition in Lemma~\ref{rel1}(3) and so
$$\tilde r(\partial c')=[B(0),(\mathbf{A}(m+1,n))^{-1}\cdot A(0)\cdot\mathbf{A}(m,n)].$$
Thus
\begin{align*}
(\mathbf{(B, A)}& (m+1,n+1))^{-1}\cdot\tilde r(A(0))\cdot\mathbf{(B, A)} (m,n+1)\\
 =& (\mathbf{(B, A)} (m+1,n))^{-1}\cdot\tilde r(A(0))\cdot \mathbf{(B, A)} (m,n)
\end{align*}
The proof is completed by induction.
\end{proof}

\subsection{Graphs not containing 4-nuclei}\label{ss33:P2}

Let $\Gamma$ be a graph containing none of $N_i$s. We can first assume that $\Gamma$ is a cactus graph with an outerplanar embedding. Since $\Gamma$ does not contain $N_4$, the maximal tree $T$ should be linear, in other words, there is a path in $T$ containing all the vertices with degree $\geq 3$. Also since $\Gamma$ does not contain $N_3$, every cycle contains at most two vertices of $\mathrm{deg} \geq 3$. Finally, since $\Gamma$ does not contain $N_2$, a cycle containing two vertices of degree $\geq 3$ cannot intersect other cycles. Therefore $\Gamma$ is a linear concatenation of two kinds of \emph{building blocks} in Figure~\ref{fig:bb}: a {\em star-bouquet}, and a {\em candy} which is a cycle with two star vertices.

\begin{figure}[ht]
\centering
\includegraphics[height=1.5cm]{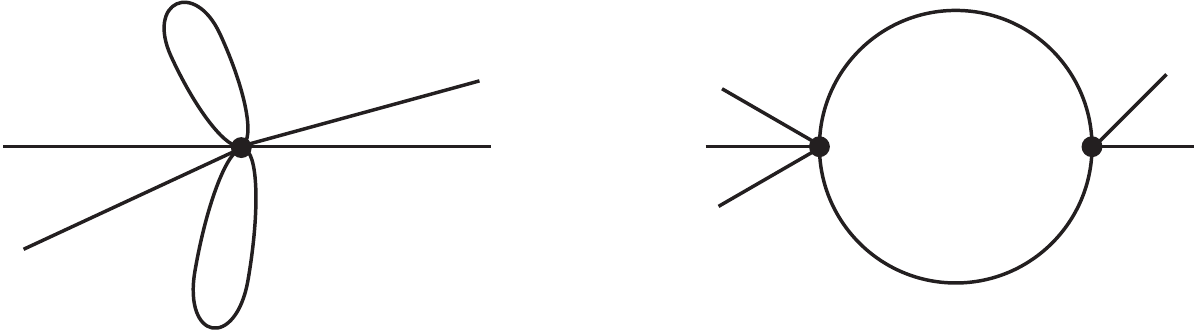}
\caption{Star-bouquet and candy}
\label{fig:bb}
\end{figure}

We take an outer-planar embedding of $\Gamma$  by placing a path containing all vertices of degree $\geq 3$ on the $x$-axis and placing all other vertices in the lower half plane except a vertex at the right end. Then we choose the vertex at the right end as the base vertex as in Figure~\ref{fig:emb}. The procedure in \S\ref{ss31:DM} gives a maximal tree $T$ of $\Gamma$ and a numbering on vertices.

\begin{figure}[ht]
\centering
\includegraphics[height=2cm]{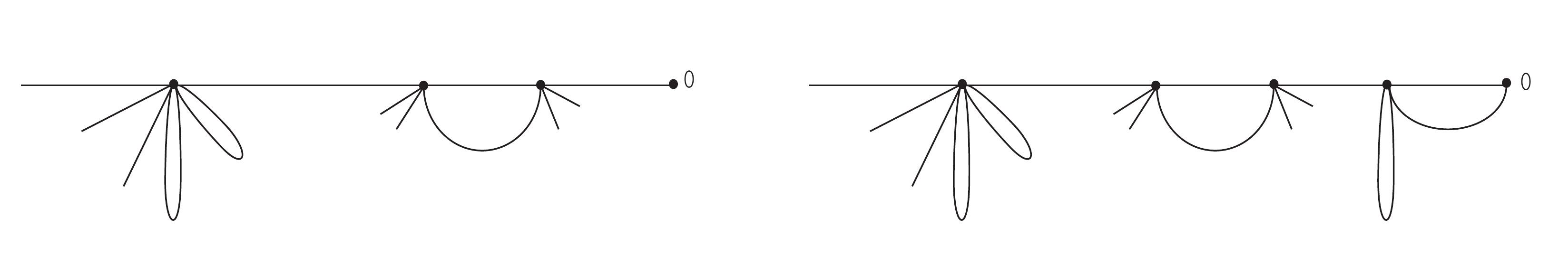}
\caption{Figure of the embeddings}
\label{fig:emb}
\end{figure}

Then we have the following property:
\begin{enumerate}
\item[(T5)] For any distinct building blocks $\mathcal B_1$ and $\mathcal B_2$,  let $m_1$ and $m_2$ ($M_1$ and $M_2$) be the smallest (largest, respectively) vertices of $\mathcal B_1$ and $\mathcal B_2$. Then either $M_1 \leq m_2$ or $M_2 \leq m_1$.
\end{enumerate}
Furthermore, for any vertices $A$, $B$ of of degree $\geq 3$ such that $A < B$, we have  $A\wedge B = A$ and $g(A, B) = \mu(A)$.

%Also for a type II building block $\mathcal B$, The two vertices $v_1 < v_2$ of degree %$\geq 3$ will be called \emph{head} and \emph{tail} of $\mathcal{B}$, denoted by $H$ %and $T$, respectively.

Using Property (T5) and Lemma~\ref{rel1}, it is easy to see the following lemma.

\begin{lem}\label{rel2}
Let $\Gamma$ be a graph contain none of $N_i$'s. Let $c$ be a critical 2-cell in $UD_n\Gamma$ of the form $A_k(\vec a)\cup B_\ell(\vec b)$ with $A<B$. Then
    \begin{enumerate}
        \item The case $A \wedge B < A$ does not occur.
        \item If $A \wedge B = A < B \wedge \iota(A_k) < B$, then $k<0$ and both $A$ and $B \wedge \iota(A_k)$ lie on a candy, and
            $$\tilde r(\partial c)=[B_\ell(\vec b),\gamma \cdot A_k(\vec a+(|\vec b|+1)\vec\delta_{|k|})]$$
            where $\gamma=\mathbf{C}((|\vec b|+1)\vec\delta_{g(C,B)},1)$ for $C=B \wedge \iota(A_k)$.
        \item If $A \wedge B = A = B \wedge \iota(A_k)$,
            $$\tilde r(\partial c)=[B_\ell(\vec b),A_k(\vec a+(|\vec b|+1)\vec\delta_{g(A,B)})]$$
        \item If $A \wedge B = A < B \wedge \iota(A_k) = B$, then $k<0<\ell$ and both $A$ and $B$ lies on a candy, and
            %$\tilde r(\partial c)$ presents the following relation
            \begin{align*}
            \tilde r(\partial c)=& B_\ell(\vec b+\vec\delta_1)\cdot \beta_2\cdot A_k(\vec a+(|\vec b|+1)\vec\delta_{|k|})\cdot B_\ell(\vec b)^{-1}\\
            &\cdot A_k(\vec a+(|\vec b|+1)\vec\delta_{|k|})^{-1}\cdot\beta_1^{-1}
            \end{align*}
            where $\beta_1=\mathbf{B}(\vec b+\vec\delta_\ell,1,1)$ and
            $\beta_2=\mathbf{B}(\vec b,1,1)$.
    \end{enumerate}
\end{lem}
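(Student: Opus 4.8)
The plan is to deduce Lemma~\ref{rel2} directly from Lemma~\ref{rel1}, feeding in the two structural features of a graph $\Gamma$ containing none of the $N_i$: first, for the maximal tree $T$ constructed in \S\ref{ss33:P2} one has $A\wedge B=A$ and $g(A,B)=\mu(A)$ for all vertices $A<B$ of degree $\ge 3$ (the content of the paragraph containing (T5)); second, any cycle of $\Gamma$ carrying two distinct vertices of degree $\ge 3$ is a candy, by the absence of $N_3$, and such a candy meets no other cycle, by the absence of $N_2$. Part (1) is then immediate: since $A\wedge B=A$ always holds, the hypothesis $A\wedge B<A$ of Lemma~\ref{rel1}(1) is never satisfied, so the three remaining cases of Lemma~\ref{rel1} exhaust all possibilities and match the hypotheses of parts (2)--(4) respectively.

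For part (3), apply Lemma~\ref{rel1}(3), whose decorating factors are $\omega_1=\mathbf A(\vec a+\vec\delta_{|k|},g(A,B),|\vec b|+1)$ and $\omega_2=\mathbf A(\vec a,g(A,B),|\vec b|+1)$. The vector $\vec a$ has $\mu(A)$ coordinates and $1\le|k|\le\mu(A)$, since $|k|$ is a branch number of $A$; because $g(A,B)=\mu(A)$, both $\vec a$ and $\vec a+\vec\delta_{|k|}$ coincide with their truncations at coordinate $g(A,B)$, so $\omega_1=\omega_2=1$ by part~(1) of the Proposition preceding Lemma~\ref{rel1}. What is left is exactly the stated commutator $[B_\ell(\vec b),A_k(\vec a+(|\vec b|+1)\vec\delta_{g(A,B)})]$.

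For parts (2) and (4), recall that cases (2) and (4) of Lemma~\ref{rel1} force $k<0$, so $A_k$ is the deleted edge of a cycle on which $A$ lies together with $C=B\wedge\iota(A_k)$ in case (2) and together with $B$ in case (4). This cycle then carries two distinct vertices of degree $\ge 3$, hence is a candy (no $N_3$) disjoint from every other cycle (no $N_2$); in particular, in case (4) the edge $B_\ell$ cannot be the deleted edge of a second cycle through $B$, so $\ell>0$. Moreover $g(A,B)=\mu(A)=|k|$, so $\vec a$ equals its truncation at $g(A,B)$; hence part~(1) of the Proposition preceding Lemma~\ref{rel1} gives $\omega=\mathbf A(\vec a,|k|,|\vec b|+1)=1$ in case (2) and $\omega_1=\mathbf A(\vec a,|k|,|\vec b|+2)=\omega_2=\mathbf A(\vec a,|k|,|\vec b|+1)=1$ in case (4), while part~(2) of the same Proposition gives $(\mathbf B,\mathbf A)(\vec b,\vec a)=\mathbf B(\vec b,1,1)$ and $(\mathbf B,\mathbf A)(\vec b+\vec\delta_\ell,\vec a)=\mathbf B(\vec b+\vec\delta_\ell,1,1)$. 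Substituting these simplifications into Lemma~\ref{rel1}(2) and Lemma~\ref{rel1}(4) yields the displayed formulas.

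I do not expect a genuine obstacle, only bookkeeping: once the Proposition is invoked the algebra is a mechanical cancellation of the factors $\omega,\omega_1,\omega_2$. The care is all on the combinatorial side --- confirming $g(A,B)=\mu(A)$ for every relevant pair so that the truncation identities apply, and checking that the cycle occurring in cases (2) and (4) is indeed a candy disjoint from other cycles, which is precisely where the hypotheses that $\Gamma$ avoids $N_1$, $N_2$ and $N_3$ enter --- together with keeping straight the signs of $k$ and $\ell$ and which vertex is smallest on that cycle.
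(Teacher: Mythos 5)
Your proposal is correct and follows exactly the route the paper intends: the paper gives no detailed proof, stating only that the lemma follows from Property (T5) and Lemma~\ref{rel1}, and your argument fills this in precisely by using $A\wedge B=A$, $g(A,B)=\mu(A)=|k|$ to kill the conjugating words $\omega,\omega_1,\omega_2$ via part (1) of the Proposition and to reduce $(\mathbf{B},\mathbf{A})$ to $\mathbf{B}$ via part (2), with the candy/sign observations ($k<0<\ell$) handled as in the paper. No gaps; the bookkeeping matches the stated formulas.
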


\section{Tietze transformations}\label{s:four}

Let $\mathcal C_i$ be the set of critical $i$-cells in $UD_n\Gamma$. Then
$$B_n\Gamma=\langle \mathcal C_1\:|\: \tilde r(\partial c), c\in \mathcal C_2\rangle.$$
By performing Tietze transformations, we turn relators of this presentation into desired forms to prove Theorems \ref{thm:SCRG} and \ref{thm:if}.

\subsection{Cactus graphs}\label{ss41:scrg}

If $\Gamma$ is a cactus graph, Lemma~\ref{rel1} tells us that a critical 2-cell $A_k(\vec a)\cup B_\ell(\vec b)$ with $A<B$ does not produce a commutator under rewriting of its boundary only if $k<0$, that is, $A_k$ is a deleted edge and both vertices $A$ and $B$ belong to a cycle of $\Gamma$. We further divide such critical 2-cells into two classes:
$$\mathcal{S}_0=\{A_k\cup B_\ell(\vec b)\}\quad\text{and}\quad \mathcal S_4=\{A_k(\vec a)\cup B_\ell(\vec b)\:|\:\vec a\ne \vec 0\}.$$

A critical 2-cell $A_k\cup B_\ell(\vec b-\vec\delta_1)$ in $\mathcal{S}_0$ produces a relation
$$ B_\ell(\vec b)=\mathbf{B}(\vec b+\vec\delta_{|\ell|}-\vec\delta_1,1,1)\cdot A_k(|\vec b|\vec\delta_{|k|})  \cdot B_\ell(\vec b-\vec\delta_1)\cdot  (A_k(|\vec b|\vec\delta_{|k|} ))^{-1}\cdot (\mathbf{B}(\vec b-\vec\delta_1,1,1))^{-1}.$$

If $B$ is not the smallest among vertices of degree $\geq 3$ in a cycle of $\Gamma$ and $B_\ell(\vec b-\vec\delta_1)$ is a critical 1-cell, then
there is a critical 2-cell $A_k\cup B_\ell(\vec b-\vec\delta_1)$ such that $A\wedge B=A<B\wedge\iota(A_k)=B$ and the critical 1-cell $B_\ell(\vec b)$ can be written in terms of other critical 1-cells. In this case, $B_\ell(\vec b)$ is called a {\em target}.
We denote the set of all targets by $\mathcal{T}$. For $B_\ell(\vec b)\in\mathcal{T}$, the right hand side of the above relation is denoted by $\mathbf R(B_\ell(\vec b))$.

\begin{lem}\label{lem:comm}
Let $\Gamma$ be a cactus graph and let $\mathcal S_0\subset \mathcal C_2$ be defined above.
For $c=A_k(\vec a)\cup B_\ell(\vec b)\in\mathcal S_4$, if we replace $B_\ell(\vec b+\vec\delta_1)$ by $\mathbf R(B_\ell(\vec b+\vec\delta_1))$ in $\tilde r(\partial c)$, we obtain a commutator $$[\beta,\alpha_1^{-1}\omega_1^{-1}\alpha_2\omega_2]$$
where $\beta=B_\ell(\vec b)$, $\alpha_1=A_k((|\vec b|+1)\vec\delta_{|k|})$, $\omega_1=\mathbf{A} (\vec{a}, g(A,B), |\vec b|+2)$, $\alpha_2=A_k(\vec a+(|\vec b|+1)\vec\delta_{|k|})$, and $\omega_2=\mathbf{A} (\vec{a}, g(A,B) ,|\vec b|+1))$.
\end{lem}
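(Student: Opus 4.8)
The plan is to compute $\tilde r(\partial c)$ for $c = A_k(\vec a)\cup B_\ell(\vec b)\in\mathcal S_4$ using Lemma~\ref{rel1}(4), which is the relevant case since $A_k$ is a deleted edge and both $A,B$ lie on a cycle; this gives the relation
\[
\omega_1'\cdot B_\ell(\vec b+\vec\delta_1)\cdot(\omega_1')^{-1}
=\beta_1\cdot\alpha_2\cdot\omega_2'\cdot B_\ell(\vec b)\cdot(\omega_2')^{-1}\cdot\alpha_2^{-1}\cdot\beta_2^{-1},
\]
where $\omega_1'=\mathbf A(\vec a,|k|,|\vec b|+2)$, $\omega_2'=\mathbf A(\vec a,|k|,|\vec b|+1)$, $\beta_2=(\mathbf B,\mathbf A)(\vec b,\vec a)$, $\beta_1=(\mathbf B,\mathbf A)(\vec b+\vec\delta_{|\ell|},\vec a)$, and $\alpha_2=A_k(\vec a+(|\vec b|+1)\vec\delta_{|k|})$. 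Here $g(A,B)=|k|$ by the hypothesis of the case, so $\omega_1=\omega_1'$ and $\omega_2=\omega_2'$ in the notation of the statement. The idea is then to substitute the defining relation for the target $B_\ell(\vec b+\vec\delta_1)$, namely $B_\ell(\vec b+\vec\delta_1)=\mathbf R(B_\ell(\vec b+\vec\delta_1))$, into the left-hand side and simplify until everything collapses into a single commutator.

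The key step is to identify $\mathbf R(B_\ell(\vec b+\vec\delta_1))$ precisely. Reading off the formula for $\mathbf R$ (applying the $\mathcal S_0$-relation with $\vec b$ replaced by $\vec b+\vec\delta_1$), we get
\[
\mathbf R(B_\ell(\vec b+\vec\delta_1))=\mathbf B(\vec b+\vec\delta_{|\ell|},1,1)\cdot\alpha_1\cdot B_\ell(\vec b)\cdot\alpha_1^{-1}\cdot(\mathbf B(\vec b,1,1))^{-1},
\]
where $\alpha_1=A_k((|\vec b|+1)\vec\delta_{|k|})$. So the left-hand side of the case~(4) relation becomes
\[
\omega_1\cdot\mathbf B(\vec b+\vec\delta_{|\ell|},1,1)\cdot\alpha_1\cdot B_\ell(\vec b)\cdot\alpha_1^{-1}\cdot(\mathbf B(\vec b,1,1))^{-1}\cdot\omega_1^{-1}.
\]
Now I invoke Lemma~\ref{lem:AB}, which in this situation ($A\wedge B=A<B$, $k=g(A,B)=|k|$) says $\omega_1=\mathbf A(\vec a,|k|,|\vec b|+2)=\beta^{-1}\cdot\omega_1\cdot\mathbf B(\vec b,1,1)$ — wait, more usefully, it relates $\omega_1$ conjugated by $\mathbf B$-words to the $(\mathbf B,\mathbf A)$-words $\beta_1,\beta_2$. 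Concretely, Lemma~\ref{lem:AB} applied with $\vec b$ and with $\vec b+\vec\delta_{|\ell|}$ should let me rewrite $\omega_1\cdot\mathbf B(\vec b+\vec\delta_{|\ell|},1,1)$ as $\beta_1\cdot\mathbf A(\vec a,|k|,|\vec b|+1)=\beta_1\cdot\omega_2$ up to the correct conjugation, and $(\mathbf B(\vec b,1,1))^{-1}\cdot\omega_1^{-1}$ as $\omega_2^{-1}\cdot\beta_2^{-1}$. The relation between $\alpha_1$ and $\alpha_2$, and the fact that $\mathbf A(\vec a,\ell,m)$-words commute appropriately past the deleted-edge generator $A_k$ with adjusted subscripts, will then turn the left side into $\beta_1\cdot\alpha_1^{-1}\cdot\omega_2^{-1}\cdot\alpha_2\cdot\omega_2\cdot\alpha_1^{-1}\cdot\cdots$ — matching up with the right side's $\beta_1\cdot\alpha_2\cdot\omega_2\cdot B_\ell(\vec b)\cdot\omega_2^{-1}\cdot\alpha_2^{-1}\cdot\beta_2^{-1}$ after cancelling the common $\beta_1$ on the left and $\beta_2^{-1}$ on the right.

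After these substitutions and cancellations the relation should read $B_\ell(\vec b)=(\alpha_1^{-1}\omega_1^{-1}\alpha_2\omega_2)\cdot B_\ell(\vec b)\cdot(\alpha_1^{-1}\omega_1^{-1}\alpha_2\omega_2)^{-1}$, i.e.\ $[\beta,\alpha_1^{-1}\omega_1^{-1}\alpha_2\omega_2]=1$ with $\beta=B_\ell(\vec b)$, which is exactly the claimed form. The main obstacle I anticipate is the bookkeeping in matching the $\mathbf B$-prefixes with the $(\mathbf B,\mathbf A)$-prefixes: the identity in Lemma~\ref{lem:AB} is stated as a single equation, so I will need to apply it twice (once at level $|\vec b|$ and once at level $|\vec b|+1$, i.e.\ with the extra $\vec\delta_{|\ell|}$ or $\vec\delta_1$) and carefully track which $\mathbf A(\vec a,|k|,\cdot)$ word sits where, since the third argument ($|\vec b|+1$ versus $|\vec b|+2$) shifts under the substitution. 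A secondary subtlety is verifying that $\alpha_1$ and $\alpha_2$ differ only by the vector $\vec a$ on branches other than the $|k|$-th (both have $(|\vec b|+1)$ vertices forced onto the $|k|$-th branch), so that $\omega_1,\omega_2$ — which by the remarks after the definition of $\mathbf A$ only involve branches $\le|k|=g(A,B)$ of $\vec a$ — conjugate $\alpha_1$ and $\alpha_2$ compatibly; this is where Property~(T5) and $g(A,B)=\mu(A)$ from \S\ref{ss33:P2} would be used if needed, though for a general cactus graph one argues directly from the structure of deleted edges via (T1)--(T3).
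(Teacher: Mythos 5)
Your overall strategy is the paper's: start from the case (4) relation of Lemma~\ref{rel1}, substitute $\mathbf R(B_\ell(\vec b+\vec\delta_1))=\gamma_1\alpha_1\beta\alpha_1^{-1}\gamma_2^{-1}$ with $\gamma_1=\mathbf B(\vec b+\vec\delta_{|\ell|},1,1)$ and $\gamma_2=\mathbf B(\vec b,1,1)$ (your identification of $\mathbf R$ is correct), and then cancel these $\mathbf B$-words against $\beta_1=(\mathbf B,\mathbf A)(\vec b+\vec\delta_{|\ell|},\vec a)$ and $\beta_2=(\mathbf B,\mathbf A)(\vec b,\vec a)$ via Lemma~\ref{lem:AB}. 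However, the concrete identities you propose at the key step are not what Lemma~\ref{lem:AB} gives, and the plan to apply it ``at level $|\vec b|+1$'' would fail. Lemma~\ref{lem:AB} with the vector $\vec b$ says $\omega_1=\beta_2^{-1}\omega_1\gamma_2$, i.e.\ $\omega_1\gamma_2=\beta_2\omega_1$ with the \emph{same} word $\omega_1=\mathbf A(\vec a,|k|,|\vec b|+2)$ on both sides; your claimed rewritings of $\omega_1\gamma_1$ into $\beta_1\omega_2$ and of $\gamma_2^{-1}\omega_1^{-1}$ into $\omega_2^{-1}\beta_2^{-1}$ replace $\omega_1$ by $\omega_2$ on one side, which is false in general (it would force $\omega_1=\omega_2$), and if used literally they would yield the commutator $[\beta,\alpha_1^{-1}\omega_2^{-1}\alpha_2\omega_2]$ rather than the stated $[\beta,\alpha_1^{-1}\omega_1^{-1}\alpha_2\omega_2]$. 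Moreover, Lemma~\ref{lem:AB} applied with the vector $\vec b+\vec\delta_{|\ell|}$ (norm $|\vec b|+1$) is a statement about $\mathbf A(\vec a,|k|,|\vec b|+3)$, a word that occurs nowhere in your computation, so that application buys nothing.

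The missing ingredient is a peeling step that the paper performs before invoking Lemma~\ref{lem:AB}: directly from the definitions of the bold notations, $\beta_1=\omega_1\beta_3\omega_1^{-1}\beta_4$ and $\gamma_1=\beta_3\gamma_3$, where $\beta_3=B_{p(\vec b+\vec\delta_{|\ell|})}((\vec b+\vec\delta_{|\ell|}-1)+\vec\delta_1)$, $\beta_4=(\mathbf B,\mathbf A)(\vec b+\vec\delta_{|\ell|}-1,\vec a)$ and $\gamma_3=\mathbf B(\vec b+\vec\delta_{|\ell|}-1,1,1)$. Hence $\gamma_1^{-1}\omega_1^{-1}\beta_1=\gamma_3^{-1}\omega_1^{-1}\beta_4$, and since $|\vec b+\vec\delta_{|\ell|}-1|=|\vec b|$, Lemma~\ref{lem:AB} now applies twice at the same norm (once with $\vec b$, once with $\vec b+\vec\delta_{|\ell|}-1$), giving $\gamma_2^{-1}\omega_1^{-1}\beta_2=\omega_1^{-1}=\gamma_3^{-1}\omega_1^{-1}\beta_4$; with these the substituted relation collapses to $\beta\alpha_1^{-1}\omega_1^{-1}\alpha_2\omega_2=\alpha_1^{-1}\omega_1^{-1}\alpha_2\omega_2\beta$, as required. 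Note also that no commuting of $\omega_1,\omega_2$ past $\alpha_1,\alpha_2$ is needed, so your anticipated appeal to (T5) (which is unavailable here, since this lemma concerns arbitrary cactus graphs) does not arise.
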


\begin{proof}
By Lemma~\ref{rel1}(4), $\tilde r(\partial c)$ gives the relation
$$\omega_1\beta_+\omega_1^{-1}=\beta_1\alpha_2\omega_2\beta\omega_2^{-1} \alpha_2^{-1}\beta_2^{-1}$$
where $\beta_+=B_\ell(\vec b+\vec\delta_1)$, $\beta_1=(\mathbf{B},\mathbf{A})(\vec b+\vec\delta_{|\ell|},\vec a)$, and $\beta_2=(\mathbf{B},\mathbf{A})(\vec b,\vec a)$.
Then replacing $\beta_+$ by
$$\mathbf R(\beta_+)=\gamma_1\alpha_1\beta\alpha_1^{-1}\gamma_2^{-1}$$
where $\gamma_1=\mathbf{B}(\vec b+\vec\delta_{|\ell|},1,1)$, $\gamma_2=\mathbf{B}(\vec b,1,1)$ and $\alpha_1=A_k((|\vec b|+1)\vec\delta_{|k|})$,
we have
$$\omega_1\gamma_1\alpha_1\beta\alpha_1^{-1} \gamma_2^{-1}\omega_1^{-1}=\beta_1\alpha_2\omega_2\beta\omega_2^{-1} \alpha_2^{-1}\beta_2^{-1}.$$

Since $\beta_1=\omega_1\beta_3\omega_1^{-1}\beta_4$ and $\gamma_1=\beta_3\gamma_3$ by the definition of bold notations where $\beta_3=B_{p(\vec{b} + \vec{\delta_{|\ell|}})} ((\vec{b} + \vec{\delta_{|\ell|}}-1) + \vec{\delta_1})$, $\beta_4=\mathbf{(B, A)} (\vec{b} + \vec{\delta_{|\ell|}} -1, \vec{a})$ and $\gamma_3=\mathbf{B} (\vec{b} + \vec{\delta_{|\ell|}} -1 ,1, 1)$, we can modify the relation as follow:
$$\beta\alpha_1^{-1}\gamma_2^{-1}\omega_1^{-1}\beta_2\alpha_2\omega_2 =\alpha_1^{-1}\gamma_3^{-1}\omega_1^{-1}\beta_4\alpha_2\omega_2\beta$$

By Lemma~\ref{lem:AB}, $\beta_4^{-1}\omega_1\gamma_3=\beta_2^{-1}\omega_1\gamma_2=\omega_1$ and so we have
$$\beta\alpha_1^{-1}\omega_1^{-1}\alpha_2\omega_2 =\alpha_1^{-1}\omega_1^{-1}\alpha_2\omega_2\beta.$$
\end{proof}

For any vertex $C$ and for any critical 1-cell $C_m(\vec c)$ in the word $\mathbf{C}(\vec d, 1,1)$, the edge of $C_m(\vec c-\vec\delta_1)$ cannot be a critical 1-cell and so $C_m(\vec c)$ is not a target. Furthermore, if $A_k(|\vec b|\vec\delta_{|k|})$ in the word $\mathbf R(B_\ell(\vec b))$ is a target, then $A$ belongs to another cycle with smaller vertices of degree $\ge3$ and so the first component of the vector $|\vec b|\vec\delta_{|k|}$ is zero and so $A_k((|\vec b|+1)\vec\delta_{|k|}-\vec\delta_1)$ cannot be defined. Thus $A_k(|\vec b|\vec\delta_{|k|})$ is not a target. Consequently the word $\mathbf R(B_\ell(\vec b))$ contains at most two targets $B_\ell(\vec b-\vec\delta_1)$ and $A_k(\vec a+|\vec b|\vec\delta_{|k|})$.

We will perform a Tietze transformation that eliminates generators $B_\ell(\vec b)$ in $\mathcal{T}$ and replace them by $\mathbf R(B_\ell(\vec b))$ in relators $\tilde r(\partial c)$ for $c\in \mathcal C_2-\mathcal{S}_0$. The word $\mathbf R(B_\ell(\vec b))$ is never altered if $B_\ell(\vec b)$ is replaced before $B_\ell(\vec b')$ is whenever $b_1> b'_1$. Given a vertex $\vec b$, at most $n-1$ successive replacements for $B_\ell(\vec b)\in\mathcal T$ can be made where $n$ is the braid index. We denote this recursive replacement done on all generators in $\mathcal T$ by a function $s: \langle \mathcal C_1\rangle\to \langle \mathcal C_1-\mathcal T\rangle$ of free groups.

The following lemma gives Theorem~\ref{thm:SCRG}.

\begin{lem}\label{scrg}
Let $\Gamma$ be a cactus graph and let $\mathcal{T}\subset \mathcal C_1$ and $\mathcal S_0\subset \mathcal C_2$ be the notations defined above.
The braid group $B_n\Gamma$ has a simple-commutator-related presentation
$$\langle \mathcal C_1-\mathcal{T} \:|\: s\circ\tilde r(\partial c), c\in \mathcal C_2-\mathcal{S}_0\rangle.$$
\end{lem}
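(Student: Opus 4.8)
The plan is to realize the claimed presentation as the output of an explicit sequence of Tietze transformations applied to the canonical Morse presentation $B_n\Gamma=\langle \mathcal C_1\mid \tilde r(\partial c),\ c\in\mathcal C_2\rangle$, organized around the two classes $\mathcal S_0$ and $\mathcal S_4$ and the set of targets $\mathcal T$. The key point is that the relators coming from $\mathcal S_0$ are exactly the ones that express each target generator $B_\ell(\vec b)\in\mathcal T$ as the word $\mathbf R(B_\ell(\vec b))$ in the remaining generators, so they are precisely the relators one uses to eliminate generators.

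First I would verify the bookkeeping needed to make the elimination legitimate. One must check that each generator in $\mathcal T$ appears (with exponent $\pm1$) in exactly one relator coming from $\mathcal S_0$, namely the one associated to the critical $2$-cell $A_k\cup B_\ell(\vec b-\vec\delta_1)$, so that this relator can be solved for $B_\ell(\vec b)$; this is exactly the displayed relation preceding the definition of $\mathcal T$. Next, using the two observations already isolated in the paragraph before the statement — that no critical $1$-cell occurring inside a word $\mathbf C(\vec d,1,1)$ is a target, and that the factor $A_k(|\vec b|\vec\delta_{|k|})$ in $\mathbf R(B_\ell(\vec b))$ is never a target — one concludes that $\mathbf R(B_\ell(\vec b))$ involves at most the two targets $B_\ell(\vec b-\vec\delta_1)$ and $A_k(\vec a+|\vec b|\vec\delta_{|k|})$. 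Ordering the eliminations so that $B_\ell(\vec b)$ is removed before $B_\ell(\vec b')$ whenever $b_1>b_1'$ (and handling the at most $n-1$ nested occurrences for a fixed base vertex), the substitution stabilizes: this is what makes the recursive replacement $s\colon\langle\mathcal C_1\rangle\to\langle\mathcal C_1-\mathcal T\rangle$ well-defined. After performing all these eliminations, the surviving generators are $\mathcal C_1-\mathcal T$ and the surviving relators are $s\circ\tilde r(\partial c)$ for $c\in\mathcal C_2-\mathcal S_0$, which is the asserted presentation; what remains is to check that these relators are simple commutators.

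For the relators coming from critical $2$-cells $c$ outside $\mathcal S_0$ that already rewrite to a commutator (cases (1)(2)(3) of Lemma~\ref{rel1}, i.e. all cells except those of the form in case (4), and in particular all of $\mathcal C_2$ not involving a deleted edge $A_k$ with both $A,B$ on a common cycle), I would observe that $\tilde r(\partial c)=[\,B_\ell(\vec b),\ \omega A_k(\vec a)\omega^{-1}\,]$ is already of the form $[u,v]$; applying $s$ sends a commutator to a commutator since $s$ is a homomorphism of free groups, so $s\circ\tilde r(\partial c)$ is again a commutator (after the standard normalization $[u,v]=[u, vuv^{-1}\cdot v]$-type manipulations are unnecessary — a conjugate of a generator is itself a ``simple'' commutator-relator in the sense used in the paper, the relator being trivial in $H_2$). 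The genuinely substantive case is $\mathcal S_4$, the cells $A_k(\vec a)\cup B_\ell(\vec b)$ with $\vec a\ne\vec 0$ and $A,B$ on a common cycle: here $\tilde r(\partial c)$ is \emph{not} a commutator, but it involves the target $B_\ell(\vec b+\vec\delta_1)$, and Lemma~\ref{lem:comm} says precisely that after replacing $B_\ell(\vec b+\vec\delta_1)$ by $\mathbf R(B_\ell(\vec b+\vec\delta_1))$ the relator becomes the commutator $[\beta,\alpha_1^{-1}\omega_1^{-1}\alpha_2\omega_2]$. One still has to carry out the remaining substitutions inside this commutator — i.e. apply $s$ to the words $\beta,\alpha_1,\omega_1,\alpha_2,\omega_2$ — but since $s$ is a homomorphism this preserves the commutator form, so $s\circ\tilde r(\partial c)$ is a simple commutator. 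Assembling these cases over all $c\in\mathcal C_2-\mathcal S_0$ gives a simple-commutator-related presentation, proving the lemma.

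The main obstacle I anticipate is the well-definedness and termination of the recursive elimination $s$: one must be sure that replacing targets does not reintroduce targets in an uncontrolled way. This is handled by the two non-target observations quoted above together with the $b_1$-decreasing ordering, which guarantees that each $\mathbf R(B_\ell(\vec b))$ is "frozen" by the time it is used and that only finitely many (at most $n-1$) nested substitutions occur for any fixed configuration; the rest of the argument is then a routine, if notation-heavy, application of Lemma~\ref{rel1} and Lemma~\ref{lem:comm}.
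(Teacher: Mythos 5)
Your proposal is correct and follows essentially the same route as the paper: the published proof simply invokes Lemma~\ref{rel1} and Lemma~\ref{lem:comm}, with the Tietze elimination of the target generators via the $\mathcal S_0$ relators and the well-definedness of the substitution $s$ set up in the paragraphs preceding the lemma exactly as you describe. The bookkeeping you flag (one eliminating $\mathcal S_0$ relator per target, at most two targets occurring in $\mathbf R(B_\ell(\vec b))$, termination of the recursive replacement, and the fact that $s$, being a substitution homomorphism, preserves the commutator form of the relators from $\mathcal S_1,\mathcal S_2,\mathcal S_3$ and of the modified $\mathcal S_4$ relators) is precisely the content the paper records before stating the lemma.
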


\begin{proof}
The presentation must be simple-commutator-related by Lemma~\ref{rel1} and Lemma~\ref{lem:comm}.
\end{proof}

\subsection{Proof of Theorem ~\ref{thm:if}}\label{ss42:raag}

For $i=1,2,3$, let $\mathcal{S}_i$ be the set of critical 2-cells $c=A_k(\vec{a})\cup B_\ell(\vec{b})$ with $A<B$ satisfying the condition $(i)$ in \S3.2. Recall that critical 2-cells $c$ satisfying the condition $(4)$ in \S3.2 are classified into $\mathcal S_0$ and $\mathcal S_4$.

We assume that $\Gamma$ be a graph containing none of 4-nuclei and work on the braid index 4. By the property (T5) in \S3.3, $\mathcal{S}_1=\emptyset$. If $c\in \mathcal{S}_2$, $A$ is the smaller vertex of a candy and $B$ is a vertex in a block behind the candy. If $c\in \mathcal{S}_3$, $A$ can be any kind of vertices. If $c\in \mathcal S_0\cup\mathcal S_4$, $A$ and $B$ are two vertices of a candy.

We first recall the simple-commutator-related presentation of $B_4\Gamma$. Combining Lemma~\ref{rel2} and Lemma~\ref{lem:comm}, we have the following lemma.

\begin{lem}\label{rel3a}
Let $\Gamma$ be a graph containing no 4-nuclei. Let $c=A_k(\vec a)\cup B_\ell(\vec b)$ be a critical 2-cell for $A<B$.
    \begin{itemize}
        \item[(a)] If $c\in\mathcal{S}_2$, then $k<0$ and $$\tilde r(\partial c)=[B_\ell(\vec b),\gamma\cdot A_k(\vec a+(|\vec b|+1)\vec\delta_{|k|})]$$
            where $\gamma=\mathbf{C}(((|\vec b|+1)\vec\delta_{\mu(C)},1,1)$.
        \item[(b)] If $c\in\mathcal{S}_3$, then $$\tilde r(\partial c)=[B_\ell(\vec b),A_k(\vec a+(|\vec b|+1)\vec\delta_{\mu(A)})].$$
        \item[(c)] If $c\in\mathcal{S}_4$, then $k<0$ and the replacement of $B_\ell(\vec b+\vec\delta_1)$ by $\mathbf R(B_\ell(\vec b+\vec\delta_1))$ in $\tilde r(\partial c)$ gives
            $$[B_\ell(\vec b),(A_k((|\vec b|+1)\vec\delta_{|k|}))^{-1}\cdot A_k(\vec a+(|\vec b|+1)\vec\delta_{|k|})].$$
        \item[(d)] If $B_\ell(\vec b)\in\mathcal{T}$, $B_\ell$ is not a deleted edge.
    \end{itemize}
\end{lem}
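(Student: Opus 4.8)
The plan is to verify the four assertions by unwinding the definitions established in \S\ref{ss31:DM} and \S\ref{ss32:P1}, specializing the structure theory of cactus graphs that avoid the four 4-nuclei. Since Lemma~\ref{rel3a} is stated as a consequence of ``Combining Lemma~\ref{rel2} and Lemma~\ref{lem:comm}'', most of the work is bookkeeping: translate each of the four cases of Lemma~\ref{rel2} into the specialized situation, then apply the simplification of \S\ref{ss41:scrg}. First I would recall from \S\ref{ss33:P2} that $\Gamma$ is a linear concatenation of star-bouquets and candies, so that for any two vertices $A<B$ of degree $\ge 3$ we have $A\wedge B = A$ and $g(A,B)=\mu(A)$, and Property (T5) holds. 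This immediately rules out case (1) of Lemma~\ref{rel2}, so every critical 2-cell $A_k(\vec a)\cup B_\ell(\vec b)$ with $A<B$ falls into case (2), (3), or (4) of that lemma, i.e.\ into $\mathcal S_2$, $\mathcal S_3$, or $\mathcal S_0\cup\mathcal S_4$.

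For part (a): if $c\in\mathcal S_2$ then by Lemma~\ref{rel2}(2) we have $k<0$ (so $A_k$ is a deleted edge), $C=B\wedge\iota(A_k)$ lies with $A$ on a common candy, and $\tilde r(\partial c)=[B_\ell(\vec b),\gamma\cdot A_k(\vec a+(|\vec b|+1)\vec\delta_{|k|})]$ with $\gamma=\mathbf C((|\vec b|+1)\vec\delta_{g(C,B)},1,1)$; I would only need to substitute $g(C,B)=\mu(C)$, which is exactly the specialization from (T5). Part (b) is the same argument applied to case (3): $\tilde r(\partial c)=[B_\ell(\vec b),A_k(\vec a+(|\vec b|+1)\vec\delta_{g(A,B)})]$ and $g(A,B)=\mu(A)$. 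Part (c) is case (4) together with Lemma~\ref{lem:comm}: Lemma~\ref{rel2}(4) gives the non-commutator relation for $c\in\mathcal S_4$, and Lemma~\ref{lem:comm} (after replacing $B_\ell(\vec b+\vec\delta_1)$ by $\mathbf R(B_\ell(\vec b+\vec\delta_1))$) yields the commutator $[\beta,\alpha_1^{-1}\omega_1^{-1}\alpha_2\omega_2]$; here I must observe that for graphs with no 4-nuclei the conjugating word $\omega_1=\mathbf A(\vec a,g(A,B),|\vec b|+2)$ and $\omega_2=\mathbf A(\vec a,g(A,B),|\vec b|+1)$ both vanish. This follows from Proposition~2.8(1): since $g(A,B)=\mu(A)$ and $\vec a$ is a vector of length $\mu(A)$, we have $\vec a=(\vec a)_{\mu(A)}$, hence $\mathbf A(\vec a,\mu(A),m)=1$ for every $m$. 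With $\omega_1=\omega_2=1$ the commutator in Lemma~\ref{lem:comm} collapses to $[B_\ell(\vec b),(A_k((|\vec b|+1)\vec\delta_{|k|}))^{-1}\cdot A_k(\vec a+(|\vec b|+1)\vec\delta_{|k|})]$, which is the stated form.

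Part (d) is a separate, short combinatorial observation about targets: I would argue that if $B_\ell(\vec b)\in\mathcal T$ then $B_\ell$ cannot be a deleted edge. Recall $B_\ell(\vec b)$ is a target precisely when there is a critical 2-cell $A_k\cup B_\ell(\vec b-\vec\delta_1)$ realizing case (4), i.e.\ $A\wedge B=A<B\wedge\iota(A_k)=B$, and moreover $B$ is not the smallest vertex of degree $\ge 3$ on its cycle. If $B_\ell$ were itself a deleted edge then $\tau(B_\ell)=B$ together with (T1)--(T3) forces $\iota(B_\ell)$ and $B$ to lie on the same cycle with $B=\tau(B_\ell)$ the smallest vertex of degree $\ge 3$ of that cycle, contradicting the requirement in the definition of a target that $B$ not be smallest; alternatively, in case (4) we need both $A$ and $B$ on a common candy with $A$ carrying the deleted edge $A_k$, and a candy has exactly one deleted edge, so $B_\ell$ must be an ordinary (tree) edge. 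Either way, $B_\ell$ is not deleted.

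The main obstacle I anticipate is part (c): correctly tracking which of the bold words $\mathbf A(\vec a,\cdot,\cdot)$, $(\mathbf B,\mathbf A)(\vec b,\vec a)$ etc.\ become trivial under the no-4-nuclei hypothesis, and checking that the hypotheses of Lemma~\ref{lem:AB} and Lemma~\ref{lem:comm} are genuinely met (in particular that $A\wedge B=A<B$ and that the relevant 1-cells really are critical, so that $\mathbf R(B_\ell(\vec b+\vec\delta_1))$ is defined). Everything else is a direct substitution of $g(A,B)=\mu(A)$ and the exclusion of case (1) via (T5), so no genuinely new computation is required.
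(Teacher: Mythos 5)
Your proposal is correct and follows essentially the same route as the paper's proof: (a) and (b) are exactly the specializations of Lemma~\ref{rel2}(2),(3) with $g(A,B)=\mu(A)$ and $g(C,B)=\mu(C)$, (c) is Lemma~\ref{lem:comm} with $\omega_1=\omega_2=1$ (since $\vec a=(\vec a)_{\mu(A)}$ when $g(A,B)=\mu(A)$), and (d) is the same candy/deleted-edge observation the paper makes. The only slight imprecision is your first justification of (d) — $B$ being smallest in the cycle of a deleted $B_\ell$ does not by itself contradict $B$ being non-smallest in the candy, which is a different cycle — but your alternative argument, which implicitly uses that a candy meets no other cycle because $\Gamma$ contains no $N_2$ (so a deleted edge at $B$ would force $B$ onto a second cycle), is the correct one and is what the paper uses.
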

\begin{proof} (a) and (b) are special cases of (2) and (3) in Lemma~\ref{rel2}, respectively and (c) is a special case of Lemma~\ref{lem:comm}. If $B_\ell$ is a deleted edge, two cycles in $\Gamma$ share the vertex $B$, that is, $B$ is the unique vertex of a star-bouquet. This violates the requirement of a vertex in a target and so we have (d).
\end{proof}

We remark that (d) implies that critical 1-cells $A_k(\vec a+(|\vec b|+1)\vec\delta_{|k|})$ in (a) and (c) are not targets. In the previous section we have remarked that every critical 1-cell in $\gamma$ in (a) and $A_k((|\vec b|+1)\vec\delta_{|k|})$ in (c) are not targets.
Also (d) says that given a vertex $B$, at most 2 successive replacements for $B_\ell(\vec b)\in\mathcal T$ can be made since the edge $B_\ell$ is order-respecting when $|\vec b|=0$.

\begin{lem} \label{RAAGback}
Let $c = A_k(\vec a) \cup B_\ell(\vec b) \in \mathcal{S}_2\cup\mathcal{S}_3$ such that $\tilde r(\partial c) = [B_\ell(\vec b), \omega]$. Suppose that $D_m(\vec d)$ is a critical 1-cell in $\mathcal C_1 - \mathcal T$ such that $D>\iota(A)\wedge B$ and $|\vec d| \leq |\vec b|$. Then there exists a decomposition $\omega_1\cdots\omega_n$ of $\omega$ such that $[D_m(\vec d),\omega_j]\in \tilde r\circ\partial(\mathcal{S}_2\cup\mathcal{S}_3 )$ for $1\le j\le n$. Consequently, $[D_m(\vec d), \omega]$ is a consequence of relators in $\tilde r\circ\partial(\mathcal{S}_2\cup\mathcal{S}_3 )$.
\end{lem}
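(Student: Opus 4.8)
## Proof proposal for Lemma~\ref{RAAGback}

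The plan is to induct on $|\vec b|$, using the structure of $\omega$ as a product built from the bold notations $\mathbf{A}$ and $\mathbf{C}$, and to match each elementary factor of $\omega$ against a boundary relator coming from a critical 2-cell of the form $D_m(\vec d')\cup(\text{that factor})$. More precisely, by Lemma~\ref{rel3a}(a)--(b) the word $\omega$ is either $A_k(\vec a+(|\vec b|+1)\vec\delta_{|k|})$ (case $\mathcal S_3$), or $\gamma\cdot A_k(\vec a+(|\vec b|+1)\vec\delta_{|k|})$ with $\gamma=\mathbf{C}((|\vec b|+1)\vec\delta_{\mu(C)},1,1)$ (case $\mathcal S_2$). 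Expanding $\gamma$ by the defining product $\prod_{\alpha} C(\alpha)$ and then applying $\tilde r$, we see $\omega$ is a product of critical $1$-cells $E_j(\vec e_j)\cup 0_{s_j}$ where each $E_j$ is a vertex on the path from $A$ up to (and including) $C=B\wedge\iota(A_k)$, and each $|\vec e_j|\le|\vec b|+1$; in fact because these cells arise inside $\mathbf{C}(\cdot,1,1)$ or as $A_k(\cdot)$ we have $|\vec e_j|\le|\vec b|$ for every factor except possibly the last one, $A_k(\vec a+(|\vec b|+1)\vec\delta_{|k|})$, whose vector has size $|\vec a|+|\vec b|+1$.

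First I would handle the generic factor. For a factor $\omega_j=E_j(\vec e_j)$ with $E_j>\iota(A)\wedge B$ and $|\vec e_j|\le|\vec b|\ge|\vec d|$: since $D>\iota(A)\wedge B$ as well, and both $D$ and $E_j$ sit on the linear spine of $\Gamma$, property (T5) and the fact that $\Gamma$ has no $N_2,N_3,N_4$ force $D\wedge E_j$ to equal the smaller of $D,E_j$, and there is room (since $|\vec d|+|\vec e_j|\le 2|\vec b|\le n-1$ is automatic here as $n=4$ and the cells in question have small vectors) to form a critical 2-cell $D_m(\vec d)\cup E_j(\vec e_j')$ or $E_j(\vec e_j')\cup D_m(\vec d)$ — whichever orientation makes the two edges order-comparable — whose boundary, by Lemma~\ref{rel2}(2)--(3), rewrites to a commutator $[D_m(\vec d),(\text{word})]$ or $[E_j(\vec e_j),(\text{word containing }D_m(\vec d))]$; peeling constants off that word by the same induction yields exactly $[D_m(\vec d),\omega_j]$. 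This is the routine part.

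The genuinely delicate factor is the last one, $A_k(\vec a+(|\vec b|+1)\vec\delta_{|k|})$, whose vector has size $|\vec a|+|\vec b|+1$, which can exceed $|\vec b|$; so Lemma~\ref{RAAGback} cannot be applied to it directly with $|\vec d|\le|\vec b|$ in the role of "the $\vec b$", and one must instead produce the commutator $[D_m(\vec d),A_k(\vec a+(|\vec b|+1)\vec\delta_{|k|})]$ from a single critical 2-cell. If $D>A$ this is a cell $D_m(\vec d)\cup A_k(\cdots)$ with $A$ as the smaller vertex — but then $A=\iota(A)\wedge B$ fails the hypothesis $D>\iota(A)\wedge B$ unless $D$ is strictly larger, and one checks the 2-cell $A_k(\vec a+(|\vec b|+1)\vec\delta_{|k|}+(\cdots)\vec\delta)\cup D_m(\cdots)$ lies in $\mathcal S_2\cup\mathcal S_3$ by the case analysis of \S3.2, its boundary rewriting (Lemma~\ref{rel2}) to $[D_m(\vec d),A_k(\cdots)]$ up to constants absorbed recursively; if $A\ge D$ the roles are reversed and one uses instead that $D_m(\vec d)$ is \emph{not} a target (hypothesis $D_m(\vec d)\in\mathcal C_1-\mathcal T$) so no replacement step interferes. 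The hard part will be bookkeeping the decomposition $\omega=\omega_1\cdots\omega_n$ so that each $\omega_j$ is precisely one of the critical $1$-cell factors listed above and verifying, case by case on which of the four configurations of \S3.2 the auxiliary 2-cell $D_m\cup\omega_j$ falls into, that its rewritten boundary is the desired commutator rather than a non-commutator relator (the case (4) pathology of Lemma~\ref{rel1}); here one uses that $D_m$ is not a deleted edge on the relevant cycle, which is exactly what $D>\iota(A)\wedge B$ together with the building-block structure guarantees.
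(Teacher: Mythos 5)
Your overall instinct---realize each piece of $\omega$ as the second entry of a rewritten boundary of an auxiliary critical 2-cell built with $D_m(\vec d)$---is the right one, but the decomposition you propose does not work, and the paper's proof is organized precisely to avoid the failure point. You split $\omega$ into \emph{every} individual critical 1-cell factor: each factor of $\gamma=\mathbf{C}((|\vec b|+1)\vec\delta_{\mu(C)},1,1)$ separately, plus $A_k(\vec a+(|\vec b|+1)\vec\delta_{|k|})$ separately. But a critical 2-cell of the form $C_{\mu(C)}(j\vec\delta_{\mu(C)}+\vec\delta_1)\cup D_m(\vec d)\in\mathcal S_3$ rewrites to $[D_m(\vec d),C_{\mu(C)}((j+|\vec d|+1)\vec\delta_{\mu(C)}+\vec\delta_1)]$, so the $\mu(C)$-coordinate of the second entry is always at least $|\vec d|+1$: the ``small'' factors of $\gamma$ (those whose $\mu(C)$-coordinate is at most $|\vec d|$, i.e.\ the subword $\gamma_c=\mathbf{C}((|\vec d|+1)\vec\delta_{\mu(C)},1,1)$) can never be obtained individually as relators. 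Likewise, in the $\mathcal S_2$ case any 2-cell $A_k(\vec v)\cup D_m(\vec d)$ is again in $\mathcal S_2$ and its rewritten boundary is $[D_m(\vec d),\gamma_c\cdot A_k(\vec v+(|\vec d|+1)\vec\delta_{|k|})]$, so the bare commutator $[D_m(\vec d),A_k(\vec a+(|\vec b|+1)\vec\delta_{|k|})]$ that your last step requires is not a relator either. Your fallback of ``peeling constants off by the same induction'' is exactly where these unobtainable commutators would be needed, so it is circular; moreover the lemma demands literal membership $[D_m(\vec d),\omega_j]\in\tilde r\circ\partial(\mathcal S_2\cup\mathcal S_3)$, not just derivability.

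The paper's decomposition fixes this by bundling the tail: $\omega=\bigl(\prod_{i=0}^{|\vec b|-|\vec d|-1}C_{\mu(C)}((|\vec b|-i)\vec\delta_{\mu(C)}+\vec\delta_1)\bigr)\cdot\bigl(\gamma_c\cdot A_k(\vec a+(|\vec b|+1)\vec\delta_{|k|})\bigr)$, where each leading factor has $\mu(C)$-coordinate at least $|\vec d|+1$ and is matched with an $\mathcal S_3$ 2-cell at $C$, while the entire last block is matched with the single 2-cell $c'=A_k(\vec a+(|\vec b|-|\vec d|)\vec\delta_{|k|})\cup D_m(\vec d)\in\mathcal S_2$ (in the $\mathcal S_3$ case the same deflation trick with $c'=A_k(\vec a+(|\vec b|-|\vec d|)\vec\delta_{\mu(A)})\cup D_m(\vec d)$ finishes in one step). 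Note the padding goes the opposite way from what you wrote: the auxiliary cell carries the \emph{deflated} vector $\vec a+(|\vec b|-|\vec d|)\vec\delta$ (legitimate because $|\vec d|\le|\vec b|$, and fitting the braid index because $c$ itself did), so that the rewriting's $(|\vec d|+1)\vec\delta$ inflation lands exactly on $\vec a+(|\vec b|+1)\vec\delta$; your ``$A_k(\vec a+(|\vec b|+1)\vec\delta_{|k|}+(\cdots)\vec\delta)\cup D_m(\cdots)$'' overshoots and need not even be a legal cell. Your vertex-count remark ``$|\vec d|+|\vec e_j|\le 2|\vec b|\le n-1$'' is also not the relevant estimate.
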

\begin{proof}
First assume that $c\in \mathcal{S}_3$. Then $\omega = A_k(\vec a+(|\vec b|+1)\vec\delta_{\mu(A)})$. Now consider $c' = A_k(\vec a+(|\vec b|-|\vec c|)\vec\delta_{\mu(A)}) \cup D_m(\vec d)$. Then $c' \in \mathcal{S}_3$ and by Lemma ~\ref{rel3a}(b), $\tilde r(\partial c') = [D_m(\vec d), A_k(\vec a+(|\vec b|+1)\vec\delta_{\mu(A)})] = [D_m(\vec d), \omega]$.

Assume that $c \in \mathcal{S}_2$. Then by Lemma ~\ref{rel3a}(a), $\omega = \gamma \cdot A_k(\vec a+(|\vec b|+1)\vec\delta_{|k|})$ where $\gamma =\mathbf{C}(((|\vec b|+1)\vec\delta_{\mu(C)},1,1)$ with $C=\iota(A)\wedge B$. Now consider $c' = A_k(\vec a+(|\vec b|-|\vec d|)\vec\delta_{|k|}) \cup D_m(\vec d)$. Then $c' \in \mathcal{S}_2$ and by Lemma~\ref{rel3a}(a),
$$\tilde r(\partial c') = [D_m(\vec d), \gamma_c A_k(\vec a+(|\vec b|+1)\vec\delta_{|k|})]$$
where $\gamma_c = \mathbf{C}(((|\vec d|+1)\vec\delta_{\mu(C)},1,1)$. Recall that $\gamma_c=\prod_{\alpha=0}^{|\vec d|}C_{\mu(C)}((|\vec d|-\alpha)\vec\delta_{\mu(C)}+\vec\delta_1)$. So
$$\gamma=(\prod_{i=0}^{|\vec b|-|\vec d|-1}C_{\mu(C)}((|\vec b|-i)\vec\delta_{\mu(C)}+\vec\delta_1))\cdot\gamma_c$$

For $c'' = C_{\mu(C)}(j\vec\delta_{\mu(C)}+\vec\delta_1)\cup D_m(\vec d) \in \mathcal{S}_3$, $\tilde r(\partial c'') = [D_m(\vec d),C_{\mu(C)}((|\vec d|+1+j)\vec\delta_{\mu(C)}+\vec\delta_1)]$ where $j=(|\vec b|-i)-|\vec d|-1$. We are done by combining the result for $c\in \mathcal{S}_3$.
\end{proof}

Among all relators obtained from critical 2-cells in $\mathcal C_2 - \mathcal{S}_0$ given in Lemma~\ref{rel3a}, we will show that those that contains targets are consequences of those that do not contain targets.
Let $\mathcal{C}\subseteq \mathcal C_2 - \mathcal{S}_0$ be a set of all critical 2-cells that produce relators containing no targets in Lemma~\ref{rel3a}.

\begin{lem}\label{raag}
$B_4 \Gamma$ has a presentation $\langle \mathcal C_1 - \mathcal{T}\:|\: s\circ\tilde r(\partial c), c\in \mathcal{C} \rangle$.
\end{lem}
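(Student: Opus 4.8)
The plan is to start from the simple-commutator-related presentation
$$B_n\Gamma=\langle \mathcal C_1\:|\:\tilde r(\partial c),\ c\in\mathcal C_2\rangle$$
and perform two batches of Tietze transformations. The first batch, already described in \S\ref{ss41:scrg}, eliminates every target generator $B_\ell(\vec b)\in\mathcal T$ via the relation coming from its associated cell in $\mathcal S_0$, replacing it recursively by the word $s(B_\ell(\vec b))=\mathbf R(B_\ell(\vec b))$ (with nested substitutions for any further targets appearing inside $\mathbf R$). This removes the defining relators coming from $\mathcal S_0$ and turns every remaining relator $\tilde r(\partial c)$ for $c\in\mathcal C_2-\mathcal S_0$ into $s\circ\tilde r(\partial c)$, giving the presentation $\langle\mathcal C_1-\mathcal T\:|\:s\circ\tilde r(\partial c),\ c\in\mathcal C_2-\mathcal S_0\rangle$ of Lemma~\ref{scrg}. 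By Lemma~\ref{rel3a}(c) and Lemma~\ref{lem:comm}, for $c\in\mathcal S_4$ this substitution already produces a genuine commutator of the form $[B_\ell(\vec b),(A_k((|\vec b|+1)\vec\delta_{|k|}))^{-1}A_k(\vec a+(|\vec b|+1)\vec\delta_{|k|})]$, and since by Lemma~\ref{rel3a}(d) both $A_k$-cells here are not targets while the first factor $B_\ell(\vec b)$ with $|\vec b|\ge 1$ is likewise not a target (its predecessor $B_\ell(\vec b-\vec\delta_1)$ still being a critical $1$-cell would only make $B_\ell(\vec b)$ a target if $B$ were the smaller candy vertex, which is excluded because $c\in\mathcal S_4$ forces $B$ to be the larger one), these relators already lie in $s\circ\tilde r(\partial\mathcal C)$. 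So the content of the lemma is entirely about the relators coming from $\mathcal S_2$ and $\mathcal S_3$.

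The second batch of work is to show that every relator $s\circ\tilde r(\partial c)$ with $c\in(\mathcal S_2\cup\mathcal S_3)-\mathcal C$ — i.e.\ one that still visibly contains a target after the first substitution — is a consequence of the relators indexed by $\mathcal C$, hence can be discarded by a further Tietze move. By Lemma~\ref{rel3a}(a),(b) such a relator is a commutator $[B_\ell(\vec b),\omega]$ with $\omega$ either $A_k(\vec a+(|\vec b|+1)\vec\delta_{\mu(A)})$ or $\gamma\cdot A_k(\vec a+(|\vec b|+1)\vec\delta_{|k|})$; since by Lemma~\ref{rel3a}(d) and the remarks after it the word $\omega$ never contains a target, the only possible target is $B_\ell(\vec b)$ itself. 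Applying $s$ replaces $B_\ell(\vec b)$ by $\mathbf R(B_\ell(\vec b))=\gamma_1\cdot A'_{k'}(|\vec b|\vec\delta_{|k'|})\cdot B_\ell(\vec b-\vec\delta_1)\cdot (A'_{k'}(|\vec b|\vec\delta_{|k'|}))^{-1}\cdot\gamma_2^{-1}$, whose syllables are $B_\ell(\vec b-\vec\delta_1)$ (a target again, handled by recursion/induction on $|\vec b|$), the non-target cells $A'_{k'}(|\vec b|\vec\delta_{|k'|})$, and critical $1$-cells occurring in the $\mathbf B$-words $\gamma_1,\gamma_2$, which are non-targets. Thus $s\circ\tilde r(\partial c)$ becomes a product of commutators of the form $[D_m(\vec d),\omega]$ where each $D_m(\vec d)$ is a non-target critical $1$-cell lying in the block of $B$ (so $D>\iota(A)\wedge B$) with $|\vec d|\le|\vec b|$ after an induction on $|\vec b|$ peels off the recursive layer. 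Lemma~\ref{RAAGback} is exactly what we need: each such $[D_m(\vec d),\omega]$ is a consequence of relators in $\tilde r\circ\partial(\mathcal S_2\cup\mathcal S_3)$, and — running the same target-elimination argument by induction on $|\vec d|$ — in fact a consequence of relators indexed by $\mathcal C$. Hence $s\circ\tilde r(\partial c)$ is redundant and may be removed.

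Concretely the proof proceeds by strong induction on $|\vec b|$ (for cells $c=A_k(\vec a)\cup B_\ell(\vec b)$ with $B$ fixed), together with the observation that Lemma~\ref{rel3a}(d) caps the recursion depth at $2$ for $n=4$. In the base case $|\vec b|=0$ the edge $B_\ell$ is order-respecting, $B_\ell(\vec b)$ is not a target, and $s\circ\tilde r(\partial c)=\tilde r(\partial c)$ already has $c\in\mathcal C$. In the inductive step we expand $s(B_\ell(\vec b))=\mathbf R(B_\ell(\vec b))$, use the commutator identities $[uv,w]=u[v,w]u^{-1}\cdot[u,w]$ etc.\ to write $s\circ\tilde r(\partial c)$ as a product of $s$-images of commutators $[\sigma,\omega]$ over the syllables $\sigma$ of $\mathbf R(B_\ell(\vec b))$, apply the inductive hypothesis to the target syllable $B_\ell(\vec b-\vec\delta_1)$ (strictly smaller $|\vec b|$), apply Lemma~\ref{RAAGback} (and an inner induction on $|\vec d|$, again bounded in depth) to the non-target syllables to show $[\sigma,\omega]$ follows from $\mathcal C$-relators, and conclude that $s\circ\tilde r(\partial c)$ is a consequence of $\{s\circ\tilde r(\partial c')\:|\:c'\in\mathcal C\}$. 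A Tietze transformation deleting the redundant relators yields $\langle\mathcal C_1-\mathcal T\:|\:s\circ\tilde r(\partial c),\ c\in\mathcal C\rangle$, as claimed. The main obstacle I anticipate is purely bookkeeping: carefully tracking which of the at most two target syllables $B_\ell(\vec b-\vec\delta_1)$ and the $A$-cell can occur in each $\mathbf R$-word and in each recursive layer, verifying that the hypotheses $D>\iota(A)\wedge B$ and $|\vec d|\le|\vec b|$ of Lemma~\ref{RAAGback} are met at every step, and confirming that no new target is ever created by the substitution $s$ — all of which rely on the structural facts (T1)--(T5) and the remarks following Lemma~\ref{rel3a} rather than on any new idea.
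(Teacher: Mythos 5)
Your overall strategy (start from Lemma~\ref{scrg}, discard the relators that still contain targets by showing they are consequences of the target-free ones, using Lemma~\ref{RAAGback}) is the paper's strategy, but there is a genuine gap: you assert that for $c\in\mathcal S_2\cup\mathcal S_3$ ``the word $\omega$ never contains a target, [so] the only possible target is $B_\ell(\vec b)$ itself.'' That is false. The remark after Lemma~\ref{rel3a} only rules out targets among the $A$-side cells of cases (a) and (c), where $A_k$ is a \emph{deleted} edge (and among the cells of $\gamma$ and $A_k((|\vec b|+1)\vec\delta_{|k|})$); it says nothing about case (b). For $c\in\mathcal S_3$ the edge $A_k$ can be a tree edge with $A$ the larger vertex of a candy, and then $A_k(\vec a+(|\vec b|+1)\vec\delta_{\mu(A)})$ can be a target (this forces $|\vec a|=2$, $|\vec b|=0$). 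This case needs a separate argument, which the paper supplies: one expands $\mathbf R(A_k(\vec a+\vec\delta_{\mu(A)}))=\mathbf{A}(\vec c+\vec\delta_k,1,1)\cdot H_m(3\vec\delta_{|m|})\cdot A_k(\vec c)\cdot H_m(3\vec\delta_{|m|})^{-1}\cdot\mathbf{A}(\vec c,1,1)^{-1}$, where $H_m$ is the deleted edge of the cycle containing $A$, and then exhibits explicit target-free critical 2-cells (e.g.\ $H_m(2\vec\delta_{|m|})\cup B_\ell$, giving $[B_\ell, A_{\mu(A)}(\vec\delta_1)\cdot H_m(3\vec\delta_{|m|})]$, and cells $A_p(\vec v-\vec\delta_{\mu(A)})\cup B_\ell$ for the remaining syllables) whose relators let $B_\ell$ commute past every syllable of the replacement. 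Lemma~\ref{RAAGback} alone does not cover this, since there the non-target cell sits on the left of the commutator and the target-free word $\omega$ on the right, whereas here the target sits inside $\omega$ and its replacement introduces conjugating words not of the form handled by that lemma.

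A smaller but real error: in dismissing the $\mathcal S_4$ case you argue that $B_\ell(\vec b)$ cannot be a target ``because $c\in\mathcal S_4$ forces $B$ to be the larger [candy vertex],'' but the definition of a target \emph{requires} $B$ not to be the smallest vertex of degree $\ge 3$ on its cycle, so being the larger candy vertex is exactly the situation in which targets occur; your criterion is reversed. The correct reason, which the paper gives, is numerical: in $\mathcal S_4$ one has $|\vec a|\ge 1$, hence $|\vec b|\le 1$ at braid index $4$, so $B_\ell(\vec b-\vec\delta_1)$ is either undefined or a non-critical cell, and $B_\ell(\vec b)$ cannot be a target. Your treatment of the cases where $B_\ell(\vec b)$ itself is the target (via expanding $\mathbf R$, peeling syllables, and invoking Lemma~\ref{RAAGback}) matches the paper; the missing $\mathcal S_3$ subcase is the substantive omission.
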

\begin{proof}
Let $c=A_k(\vec a)\cup B_\ell(\vec b)$ be a critical 2-cell in $\mathcal C_2 - \mathcal{S}_0$.
It is enough to show that if a relator in Lemma~\ref{rel3a} contains a target then the relator is the identity in $\langle \mathcal C_1 - \mathcal{T} | s\circ\tilde r(\partial c), c\in \mathcal{C} \rangle$.

Suppose $c\in \mathcal{S}_2$. By the remark made right after Lemma ~\ref{rel3a}, only $B_\ell(\vec b)$ can be a target in $\tilde r(\partial c)$. If $B_\ell(\vec b)$ is a target, $B$ is the larger vertex of a candy that lies behind $\iota(A)\wedge B$. Each term in the (successive) replacement of $B_\ell(\vec b)$ satisfies the hypothesis of Lemma ~\ref{RAAGback}.

Suppose $c\in \mathcal{S}_3$ and $A_k(\vec a+(|\vec b|+1)\vec\delta_{\mu(A)})$ in $\tilde r(\partial c)$ is a target. Since $A_k(\vec a-\vec\delta_1+(|\vec b|+1)\vec\delta_{\mu(A)})$ is a critical 1-cell and $A_k$ is not deleted, $|\vec a| = 2$. Thus $|\vec b| = 0$ and so $B_\ell(\vec b)$ in $\tilde r(\partial c)$ cannot be a target. Recall the replacement
$$\mathbf R(A_k(\vec a+\vec\delta_{\mu(A)}))=\mathbf{A} (\vec{c}+\vec{\delta}_k,1, 1) \cdot H_m(3\vec\delta_{|m|}) \cdot A_k(\vec c) \cdot {H_m(3\vec\delta_{|m|})}^{-1} \cdot (\mathbf{A}(\vec{c},1,1))^{-1}$$ where $\vec c=\vec a-\vec\delta_1+\vec\delta_{\mu(A)}$ and $H_m$ is the deleted edge of the cycle containing $A$. For the critical 2-cell $c' = H_m(2\vec\delta_{|m|})\cup B_\ell$, we have $\tilde r (\partial c') = [B_\ell, A_{\mu(A)} (\vec\delta_1)\cdot H_m(3\vec\delta_{|m|})]$ by Lemma ~\ref{rel3a}(b). There are no targets in $\tilde r (\partial c')$ and so $c' \in\mathcal{C}$. Note that $A_{\mu(A)} (\vec\delta_1)$ appears at the end of both expressions $\mathbf{A} (\vec{c} +\vec{\delta}_k,1, 1)$ and $\mathbf{A} (\vec{c},1, 1)$, and every term $A_p(\vec v)$ in the replacement except for $A_{\mu(A)} (\vec\delta_1)$ and $H_m(3\vec\delta_{|m|})$ satisfy that the last coordinate of $\vec v$ is not zero. Therefore, the critical 2-cell $c'' = A_p(\vec v-\vec\delta_{\mu(A)}) \cup B_\ell$ gives $T_1 (c'') = [B_\ell, A_p(\vec v)]$ and $c'' \in\mathcal{C}$. Now we are done.

Suppose $c\in \mathcal{S}_3$ and $B_\ell(\vec b)$ in $\tilde r(\partial c)$ is a target. Then $A_k(\vec a+(|\vec b|+1)\vec\delta_{\mu(A)})$ in $\tilde r(\partial c)$ is not a target. So we are done by Lemma ~\ref{RAAGback}.

Suppose $c\in \mathcal{S}_4$. Since $|\vec a|\ge1$ and $|\vec b|\le1$, $B_\ell(\vec b)$ is not a target and so $\tilde r(\partial c)$ contains no targets.
\end{proof}

We now explain an idea how to turn the presentation in the previous lemma into one for a right-angled Artin group via a series of Tietze transformations.
For $i=2,3,4$, let $\mathcal S'_i=\mathcal S_i\cap\mathcal C$ and
$H(\mathcal S'_i)$ (and $T(\mathcal S'_i)$, respectively) denote the set of critical 1-cells based at $A$ ($B$, respectively) in commutator relations in Lemma~\ref{rel3a} produced by $A_k(\vec a)\cup B_\ell(\vec b)\in\mathcal S'_i$ with $A<B$.
Then %$H(\mathcal S'_2)\cap T(\mathcal S'_2) contains critical 1-cell as the form $A_k(2\vec\delta_{|k|})$
$H(\mathcal S'_2)\cap H(\mathcal S'_3)=H(\mathcal S'_2)\cap T(\mathcal S'_4)=\emptyset$ by the property of $\Gamma$ containing no 4-nuclei.

Lemma~\ref{rel3a} says that we have three families of commutator relations as follows:
\begin{enumerate}
\item[(a)] $[a_i,\gamma_ja_j]$ for $a_j\in H(\mathcal S'_2)$ and $a_i\in T(\mathcal S'_2)$
\item[(b)] $[a_i, b_j]$ for $b_j\in H(\mathcal S'_3)$ and $a_i\in T(\mathcal S'_3)$
\item[(c)] $[b_i,a_ja_k]$ for $a_j, a_k\in H(\mathcal S'_4)$ and $b_i\in T(\mathcal S'_4)$
\end{enumerate}
where each $\gamma_j$ is a word over a subset $\mathcal D$ of $\mathcal C_1-\mathcal T$ such that $H(\mathcal S'_2)\cap\mathcal D=\emptyset$.

We turn relators of the above presentation into commutators of two generators by performing Tietze transformations as follows :
\begin{enumerate}
\item[(I)]
    \begin{enumerate}
    \item[(i)] Starting from a generator $a_j$ in (a) that is based at the largest vertex.
    \item[(ii)] Add a new generator $a'_j=\gamma_ja_j$ for each $a_j\in H(\mathcal S'_2)$ and delete $a_j$ by setting $a_j=\gamma^{-1}_ja'_j$ so that the relator $[a_i,\gamma_ja_j]$ in (a) turns into $[a_i,a'_j]$.
    \item[(iii)] If $a_j\in H(\mathcal S'_2)\cap T(\mathcal S'_2)$, replace the relator $[\gamma^{-1}_ja'_j,\gamma_ka_k]$ in (a) by $[a'_j,\gamma_ka_k]$. Notice that $[\gamma^{-1}_j,\gamma_ka_k]$ is a consequence of other relators by Lemma~\ref{RAAGback} and so $[\gamma^{-1}_ja'_j,\gamma_ka_k]$ is a consequence of $[a'_j,\gamma_ka_k]$ and other relators.
    \item[(iv)] Choose another generator $a_j$ in (a) that is based at the vertex smaller than or equal to the one we just replaced. Repeat the steps until all relators in (a) become commutator of generators.
     \end{enumerate}
\item[(II)] For each $a_i\in H(\mathcal S'_2)\cap T(\mathcal S'_3)$, replace the relator $[\gamma^{-1}_ia'_i,b_j]$ in (b) by $[a'_i,b_j]$. Notice that $[\gamma^{-1}_i,b_j]$ is a consequence of other relators by Lemma~\ref{RAAGback} and so $[\gamma^{-1}_ia'_i,b_j]$ is a consequence of $[a'_i,b_j]$ and other relators.
\item[(III)] For each $a_j, a_k\in H(\mathcal S'_2)\cap H(\mathcal S'_4)$, $a_j^{-1}a_k$ in a relator of (c) become either $(a'_j)^{-1} c_ka'_k$ or $(a'_j)^{-1}a'_k$ after replacement where $c_k$ is the last generator in the word $\gamma_k^{-1}$. In the former case, we replace $(a'_j)^{-1} c_ka'_k$ by a new generator $c'_k$. One can check that if the braid index is 4 then $c_k$ uniquely determine $a_j$ and $a_k$ and so $a'_j$ and $a'_k$. One can also check that if $c_k$ appears in other relators that are already commutators of generators, so do $a'_j$ and $a'_k$. Thus it is enough to replace $c_k$ simply by $c'_k$ in other relators that are already commutators of generators. In the latter case, introduce a new generator $a''_k=(a'_j)^{-1}a'_k$ and delete $a'_k$ by setting $a'_k=a'_ja''_k$. One can check that if $[a'_i,a'_k]$ is a relator in (a) then $[a'_i,a'_j]$ is also a relator in (a). So we can replace the relator $[a'_i,a'_ka''_k]$ in (a) by $[a'_i,a''_k]$. One can check that $a'_k$ does not appears in relatiors of (b).
\end{enumerate}

In order to prove Theorem~\ref{thm:if} with mathematical rigor, we show that the right-angled Artin group presentation derived via a series of these Tietze transformations is isomorphic to the presentation in Lemma~\ref{raag}.

For $i=1,2,3$, consider the subsets $\mathcal{H}_i$ of $\mathcal C_1 - \mathcal T$ of critical 1-cells $A_k(\vec a)$ such that
\begin{enumerate}
\item[(i)] $A$ is the smaller vertex in a candy;
\item[(ii)] $A_k$ is the unique deleted edge in the candy;
\item[(iii)] The $|k|$-th coordinate of $\vec a$, which is the last coordinate, is $i$.
\end{enumerate}
Also consider the class of critical 1-cells
$$\mathcal{H}_4=\{ C_{\mu(C)} (2\vec{\delta}_{\mu(
C)} + \vec\delta_1)\:|\: C \mbox{ is the larger vertex of degree $\ge3$ in a candy}\}.$$
Let $\mathcal{H}= \mathcal{H}_1 \cup \mathcal{H}_2 \cup \mathcal{H}_3 \cup \mathcal{H}_4$ and introduce a set $\overline{\mathcal{H}} = \lbrace \bar{a} | a \in \mathcal{H}\rbrace$ of new generators. Define a function $\beta:\mathcal C_1-\mathcal{T}\to (\mathcal C_1-\mathcal{T}-\mathcal{H})\cup\overline{\mathcal{H}}$ by $\beta(a)=\bar a$ for $a\in\mathcal{H}$ and $\beta(a)=a$ otherwise. Let $F$ be a function that associates to each $c = A_k(\vec a) \cup B_\ell(\vec b) \in\mathcal{C}$ with $A < B$ a commutator $F(c)$ over $(\mathcal C_1-\mathcal{T}-\mathcal{H})\cup\overline{\mathcal{H}}$ as follows: If $c\in\mathcal S'_4$ and $\vec a = \vec\delta_{|k|}$, $F(c)=[B_\ell(\vec b),\bar b]$ where $b = C_{\mu(C)} (2\vec\delta_{\mu(C)} + \vec\delta_1)\in \mathcal{H}_4$. Otherwise, $F(c)=[\beta(B_\ell(\vec b)),\beta(A_k(\vec a+(|\vec b|+1)\vec\delta_{\mu(A)}))]$.
The following lemma gives Theorem~\ref{thm:if}.

\begin{lem}
Let $\Gamma$ be a graph containing no 4-nuclei. Then $B_4\Gamma$ is isomorphic to a right-angled Artin group $G=\langle (\mathcal C_1 - \mathcal{T} - \mathcal{H}) \cup \overline{\mathcal{H}}\:|\: F(c), c\in\mathcal{C} \rangle$.
\end{lem}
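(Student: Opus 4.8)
The plan is to exhibit an explicit Tietze-transformation chain between the presentation in Lemma~\ref{raag} and the target presentation $G$, and then verify that $G$ satisfies the defining relations of a right-angled Artin group (i.e.\ all relators are commutators of distinct generators, with no generator commuting with itself). First I would recall from Lemma~\ref{raag} that $B_4\Gamma=\langle \mathcal C_1-\mathcal T\mid s\circ\tilde r(\partial c),\ c\in\mathcal C\rangle$, and organize the relators according to the three families (a), (b), (c) of Lemma~\ref{rel3a} together with the decomposition $\mathcal C = (\mathcal S'_2\cup\mathcal S'_3)\sqcup\mathcal S'_4$. The function $\beta$ records exactly the change of generators performed in steps (I)(ii), (III): each $a\in\mathcal H_1\cup\mathcal H_2\cup\mathcal H_3$ (a deleted-edge cell $A_k(\vec a)$ at the smaller vertex of a candy, with last coordinate $i$) is replaced by $\bar a := \gamma\, a$ where $\gamma=\mathbf C((|\vec a|_{<|k|})\vec\delta_{g(C,B)},1,1)$ is the correcting word attached to it in family (a); and each $b\in\mathcal H_4$ of the form $C_{\mu(C)}(2\vec\delta_{\mu(C)}+\vec\delta_1)$ is replaced by the new generator $\bar b$ defined so that the $\mathcal S'_4$-relator with $\vec a=\vec\delta_{|k|}$ becomes $[B_\ell(\vec b),\bar b]$. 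I would check that $\beta$ (extended multiplicatively) is a bijection of free groups on the two generating sets, so it induces a Tietze equivalence provided the images of the relators match $F(c)$.

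Next I would verify the three cases of the relator comparison. For $c\in\mathcal S'_3$: by Lemma~\ref{rel3a}(b) the relator is $[B_\ell(\vec b),A_k(\vec a+(|\vec b|+1)\vec\delta_{\mu(A)})]$; applying $\beta$ to both entries gives exactly $F(c)=[\beta(B_\ell(\vec b)),\beta(A_k(\cdots))]$, since $\beta$ is a homomorphism and neither entry needs a $\gamma$-correction of the kind handled in (III) (here $|\vec a|\le 2$, $|\vec b|=0$ when targets are involved, and the bookkeeping in the proof of Lemma~\ref{raag} already reduced everything to relators in $\mathcal C$). For $c\in\mathcal S'_2$: by Lemma~\ref{rel3a}(a) the relator is $[B_\ell(\vec b),\gamma\cdot A_k(\vec a+(|\vec b|+1)\vec\delta_{|k|})]$; the point is that $\gamma\cdot A_k(\vec a+(|\vec b|+1)\vec\delta_{|k|})=\beta(A_k(\vec a+(|\vec b|+1)\vec\delta_{|k|}))$ by the very definition of $\beta$ on $\mathcal H_2\cup\mathcal H_3$ — this is the content of steps (I)(ii)--(iii) — so again the image is $F(c)$. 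For $c\in\mathcal S'_4$: Lemma~\ref{rel3a}(c) gives $[B_\ell(\vec b),(A_k((|\vec b|+1)\vec\delta_{|k|}))^{-1}\cdot A_k(\vec a+(|\vec b|+1)\vec\delta_{|k|})]$; when $\vec a=\vec\delta_{|k|}$ and $|\vec b|\le 1$ one checks directly that $(A_k((|\vec b|+1)\vec\delta_{|k|}))^{-1} A_k(\vec a+(|\vec b|+1)\vec\delta_{|k|})$ differs from a single generator in $\mathcal H_4$ by a correcting word, which is precisely the substitution introducing $\bar b$, yielding $F(c)=[B_\ell(\vec b),\bar b]$; for other $\vec a$ one falls back to the generic formula after the substitutions of (III), noting $H(\mathcal S'_2)\cap T(\mathcal S'_4)=\emptyset$ guarantees no conflict between the two roles a cell might play. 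Once the relator images match, the Tietze equivalence $B_4\Gamma\cong G$ follows.

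Finally I would confirm that $G$ is genuinely a right-angled Artin group, i.e.\ that every $F(c)$ is a commutator $[u,v]$ with $u\neq v$ (and $u,v$ distinct generators after the substitution, not merely distinct words). Distinctness of the two entries is clear because one is based at $A$ and the other at $B$ with $A<B$, or (in the $\mathcal S'_4$, $\vec a=\vec\delta_{|k|}$ case) one is $B_\ell(\vec b)$ and the other is $\bar b\in\overline{\mathcal H_4}$; since $B_\ell(\vec b)\notin\mathcal H$ by Lemma~\ref{rel3a}(d) and the structure of targets, $\beta(B_\ell(\vec b))\neq\beta(\cdot)$ of the other entry. I would also need to rule out a generator being identified with one of its own commutands under $\beta$, which cannot happen because $\beta$ is injective and the underlying cells are distinct.

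The main obstacle I anticipate is the third case of case (III): verifying that ``if the braid index is $4$ then $c_k$ uniquely determines $a_j$ and $a_k$'' and that the substitutions $c_k\mapsto c'_k$ and $a'_k\mapsto a'_j a''_k$ are consistent across \emph{all} relators simultaneously — in particular that replacing $c_k$ in relators of family (a) that are already commutators does not accidentally break them, and that $a'_k$ really is absent from family (b). This is where the hypothesis $n=4$ (equivalently $|\vec b|\le 1$, $|\vec a|\le 2$, and the ``at most two successive target replacements'' remark after Lemma~\ref{rel3a}) is essential: it bounds the combinatorial complexity of the correcting words $\gamma$ so that each bold word $\mathbf A(\cdots)$ or $\mathbf C(\cdots)$ appearing is short enough that the substitution is unambiguous. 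I would carry this out by a careful but finite case analysis on the shapes $\vec a,\vec b$ can take for $n=4$, using Lemma~\ref{lem:AB} to normalize the correcting words, and checking the three emptiness statements $H(\mathcal S'_2)\cap H(\mathcal S'_3)=H(\mathcal S'_2)\cap T(\mathcal S'_4)=\emptyset$ and $H(\mathcal S'_2)\cap H(\mathcal S'_4)$-analysis against the building-block structure (star-bouquets and candies) established in \S\ref{ss33:P2}.
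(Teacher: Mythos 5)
Your overall strategy (make the informal Tietze outline (I)--(III) rigorous) is reasonable, but the central verification claim is not correct as stated, and the place where it fails is exactly where the paper's actual proof does its work. You assert that after the change of generators $\beta$ the relators \emph{literally} match $F(c)$, so that a Tietze equivalence follows. That is false in several families of cases. First, the identification of $\bar a$ with a corrected word can only absorb \emph{one} fixed correcting word per generator (the paper's $\varphi$ uses $\mathbf{C}(i\vec\delta_{\mu(C)},1,1)\cdot x$ with $i$ the last coordinate of the vector of $x$), whereas the same critical 1-cell $x=A_k(\vec a+(|\vec b|+1)\vec\delta_{|k|})$ occurs in different $\mathcal S'_2$-relators with different correcting words $\mathbf{C}((|\vec b|+1)\vec\delta_{\mu(C)},1,1)$ (this is the case where the $|k|$-th coordinate of $\vec a$ is positive); the discrepancy is not definitional but must be killed by commutation relations coming from auxiliary critical 2-cells. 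Second, your $\mathcal S'_3$ and $\mathcal S'_2$ cases silently assume $\beta(B_\ell(\vec b))=B_\ell(\vec b)$; when the $B$-side entry itself lies in $\mathcal H_1$ or $\mathcal H_2$, the substituted relator is not the original one, and one must again invoke Lemma~\ref{RAAGback} together with specific 2-cells $c',c''$ to see that it still holds. Third, for $x\in\mathcal H_2$ with $|\vec a+(|\vec b|+1)\vec\delta_{|k|}|=3$ and for $\mathcal H_4$ the substitution is of the form $x_2^{-1}x$ resp.\ $x_2^{-1}x_3$, not $\gamma\cdot x$, so "matching by the very definition of $\beta$" does not even typecheck there. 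In all these situations the correct statement is that each relator of one presentation maps to a \emph{consequence} of the relators of the other, and proving that is the bulk of the argument.

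The paper's proof sidesteps the Tietze bookkeeping you anticipate as the "main obstacle": it defines explicit homomorphisms $\varphi:G\to B_4\Gamma$ and $\psi:B_4\Gamma\to G$ on generators (with case-dependent formulas on $\mathcal H_1,\dots,\mathcal H_4$), notes they are mutually inverse on generators, and then verifies well-definedness by showing, case by case, that $\varphi\circ F(c)=1$ in $B_4\Gamma$ and $\psi\circ s\circ\tilde r(\partial c)=1$ in $G$, each time exhibiting the auxiliary critical 2-cells in $\mathcal S'_2\cup\mathcal S'_3$ whose relators (via Lemma~\ref{RAAGback}) supply the missing commutations. Your proposal defers precisely this content to "a careful but finite case analysis" without carrying it out, and in the cases discussed above the analysis cannot reduce to literal matching; so as written the argument has a genuine gap rather than being a complete alternative route.
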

\begin{proof}
We use the simple-commutator-related presentation $$B_4\Gamma=\langle \mathcal C_1 - \mathcal{T}\:|\: s\circ\tilde r(\partial c), c\in \mathcal{C} \rangle.$$
Define a homomorphism $\varphi:G\to B_4\Gamma$ by
$$\varphi(\beta(x)) = \begin{cases} \mathbf{C}(i\vec\delta_{\mu(C)},1,1)\cdot x & \mbox{if } x \in \mathcal{H}_i \mbox{ for }i=1,3 \mbox{ or } x \in \mathcal{H}_2\mbox{ with }|\vec a| = 2 \\ x_2^{-1} x & \mbox{if }x \in \mathcal{H}_2\mbox{ with }|\vec a| = 3 \\
x_2^{-1} x_3 & \mbox{if }x \in \mathcal{H}_4 \\
x & \mbox{otherwise}
\end{cases}$$
where $x=A_k(\vec a)\in \mathcal C_1-\mathcal{T}$, $C$ is the larger vertex in the candy containing the deleted edge $A_k$, and $x_i = A_k(i\vec\delta_{|k|})$ for $i=2,3$.
And define a homomorphism $\psi:B_4\Gamma\to G$ by
$$\psi(x) = \begin{cases} \mathbf{C}(\vec\delta_{\mu(C)},1,1)^{-1}\cdot\bar x & \mbox{if } x \in \mathcal{H}_1\\
(\mathbf{C}(2\vec\delta_{\mu(C)},1,1))^{-1}\cdot\bar x_2\cdot\bar y & \mbox{if } x \in \mathcal{H}_3\\
(\mathbf{C}(2\vec\delta_{\mu(C)},1,1))^{-1}\cdot\bar x & \mbox{if } x \in \mathcal{H}_2\mbox{ with }|\vec a| = 2 \\
 (\mathbf{C}(2\vec\delta_{\mu(C)},1,1))^{-1}\cdot\bar x_2\cdot\bar x & \mbox{if }x \in \mathcal{H}_2\mbox{ with }|\vec a| = 3 \\
\bar{x}_3\cdot \bar{x}^{-1} \cdot\bar{x}_2^{-1} & \mbox{if }x \in \mathcal{H}_4 \\
x & \mbox{otherwise}
\end{cases}$$
where $x=A_k(\vec a)\in \mathcal C_1-\mathcal{T}$, $C$ is the larger vertex in the candy containing the deleted edge $A_k$ and $x_i = A_k(i\vec\delta_{|k|})$ for $i=2,3$ and $y=C_{\mu(C)} (2\vec{\delta}_{\mu(C)} + \vec\delta_1)\in\mathcal{H}_4$.
It is easy to check that $\varphi\circ \psi$ and $\psi\circ \varphi$ are the identity map on sets of generators. Thus it suffices to show that $\varphi$ and $\psi$ are well-defined, that is, for all $c\in\mathcal C$, $\varphi\circ F(c)$ and $\psi\circ s\circ\tilde r(\partial c)$ are trivial in $B_4\Gamma$ and $G$, respectively.

We first look at $\varphi\circ F$.
If $c = A_k(\vec a) \cup B_\ell(\vec b) \in \mathcal S'_4$ and $\vec a=\vec\delta_{|k|}$, then  $F(c) = [B_\ell(\vec b), \bar{y}]$ for $y = B_{\mu(B)} (2\vec\delta_{\mu(B)} + \vec\delta_1)\in \mathcal H_4$. Note that $A$ and $B$ are vertices in a candy. So $\varphi\circ F(c)=[B_\ell(\vec b),x_2^{-1}\cdot x_3]=s\circ\tilde r(\partial c)$ where $x_i = A_k(i\vec\delta_{|k|})$ for $i=2,3$.

If $c = A_k(\vec a) \cup B_\ell(\vec b) \in \mathcal S'_4$ and $\vec a\ne\vec\delta_{|k|}$, then $F(c) = [B_\ell(\vec b), \bar{x}]$ for $x=A_k(\vec a+(|\vec b|+1)\vec\delta_{\mu(A)})$. Since $x\in \mathcal{H}_2$ and $|\vec a+(|\vec b|+1)\vec\delta_{\mu(A)}|=3$, $|\vec b|=1$ and so $$\varphi\circ F(c)=[B_\ell(\vec b),(A_k(2\vec\delta_{\mu(A)}))^{-1}\cdot A_k(\vec a+2\vec\delta_{\mu(A)}))]=s\circ\tilde r(\partial c).$$

If $c = A_k(\vec a) \cup B_\ell(\vec b) \in \mathcal S'_3$ and $x=A_k(\vec a+(|\vec b|+1)\vec\delta_{\mu(A)})\in\mathcal H_4$, then $F(c) = [\beta(z), \bar{x}]$ for $z=B_\ell(\vec b)$ and so $$\varphi\circ F(c)=[\varphi(\beta(z)),\varphi(\bar x)]=[\varphi(\beta(z)),(A'_m(2\vec\delta_{\mu(A')}))^{-1}\cdot A'_m(3\vec\delta_{\mu(A')}))]$$ where $A'$ is the smaller vertex in candy containing the vertex $A$ and the deleted edge $A'_m$. Notice that $\varphi(\bar x)=\alpha_1^{-1} \alpha_2 $ for $\alpha_1=\mathcal A(2\vec\delta_{\mu(A)},1,1)\cdot A'_m(2\vec\delta_{\mu(A')})$ and $\alpha_2=\mathcal A(2\vec\delta_{\mu(A)},1,1)\cdot A'_m(3\vec\delta_{\mu(A')})$. Let $c'=B_\ell(\vec b)\cup A'_m$ and $c''=B_\ell(\vec b)\cup A'_m(\vec\delta_{\mu(A')})$ be critical 2-cells in $\mathcal S'_2$. Then $\tilde r(\partial c')=[z,\alpha_1]$ and $\tilde r(\partial c'')=[z,\alpha_2]$. If $\beta(z)=z$, we are done. If $\beta(z)\ne z$, $z\in \mathcal H_1$ since $|\vec b|=1$ and so $\varphi(\beta(z))=\mathbf{D}(\vec\delta_{\mu(D)},1,1)\cdot z$ where $D$ is the larger vertex in the candy containing $B$. By Lemma~\ref{RAAGback}, $[\varphi(\beta(z)),\alpha_1]=1$ and $[\varphi(\beta(z)),\alpha_2]=1$ and so $\varphi\circ F(c)=1$.

If $c = A_k(\vec a) \cup B_\ell(\vec b) \in \mathcal S'_3$ and $x=A_k(\vec a+(|\vec b|+1)\vec\delta_{\mu(A)})\not\in\mathcal H_4$, then $x\not\in\mathcal H$. Let $z=B_\ell(\vec b)$. If $\beta(z)=z$, $\varphi\circ F(c)=[z,x]=\tilde r(\partial c)$. If $\beta(z)\ne z$, either $z\in\mathcal H_1$ or $z\in\mathcal H_2$ with $|\vec b|=2$ since $|\vec b|\le 2$. By Lemma~\ref{RAAGback}, $[\varphi\circ\beta(z),x]=1$ since $\tilde r(\partial c)=[z,x]$.

If $c = A_k(\vec a) \cup B_\ell(\vec b) \in \mathcal S'_2$ and $x=A_k(\vec a+(|\vec b|+1)\vec\delta_{|k|})\in\mathcal H_2$ with $|\vec a+(|\vec b|+1)\vec\delta_{|k|}|=3$, then $\varphi\circ F(c) = [\varphi\circ\beta(z), x_2^{-1}x]$ for $x_2=A_k(2\vec\delta_{|k|})$.  Let $\alpha_1=\mathcal C(i\vec\delta_{\mu(C)},1,1)\cdot x_2$ and $\alpha_2=\mathcal C(i\vec\delta_{\mu(C)},1,1)\cdot x$ for $i=|\vec b|+1$. Then $c'=B_\ell(\vec b)\cup A_k((2-i)\vec\delta_{|k|})\in\mathcal S'_2$ and $\tilde r(\partial c')=[z,\alpha_1]$ and $\tilde r(\partial c)=[z,\alpha_2]$. If $\beta(z)=z$, we are done. If $\beta(z)\ne z$, $z\in \mathcal H_1$ since $|\vec b|=1$. By Lemma~\ref{RAAGback}, $[\varphi\circ\beta(z),\alpha_1]=1$ and $[\varphi\circ\beta(z),\alpha_2]=1$ and so $\varphi\circ F(c)=1$.

If $c = A_k(\vec a) \cup B_\ell(\vec b) \in \mathcal S'_2$ and $x=A_k(\vec a+(|\vec b|+1)\vec\delta_{|k|})\in\mathcal{H}_1$, then $|\vec b|=0$ and so $\beta(z)=z$ for $z=B_\ell(\vec b)$. Thus $\varphi\circ F(c)=\tilde r(\partial c)$.

If $c = A_k(\vec a) \cup B_\ell(\vec b) \in \mathcal S'_2$ and $x=A_k(\vec a+(|\vec b|+1)\vec\delta_{|k|})\in\mathcal{H}_3$ or $x\in\mathcal{H}_2$ with $|\vec a+(|\vec b|+1)\vec\delta_{|k|}|=2$, then $\vec a+(|\vec b|+1)\vec\delta_{|k|}=i\vec\delta_{|k|}$ and $\varphi\circ F(c) = [\varphi(\beta(z)),\mathbf{C}(i\vec\delta_{\mu(C)},1,1)\cdot x]$ where $C=B\wedge \iota(A_k)$ and $z=B_\ell(\vec b)$. For $c'=B_\ell((i-1)\vec\delta_1)\cup A_k\in\mathcal S'_2$, we have $$\tilde r(\partial c')=[B_\ell((i-1)\vec\delta_1),\mathbf{C}(i\vec\delta_{\mu(C)},1,1)\cdot x].$$
If $\beta(z)\ne z$ then either $z\in\mathcal H_1$ or $z\in\mathcal H_2$ with $|\vec b|=2$ since $|\vec b|\le 2$. We can apply Lemma~\ref{RAAGback} since $|\vec b|\le (i-1)$ and obtain $\varphi\circ F(c)=1$.

Next we consider the well-definedness of $\psi$.
If $c = A_k(\vec a) \cup B_\ell(\vec b) \in \mathcal S'_4$ and $\vec a=\vec\delta_{|k|}$, then $\psi(s\circ\tilde r(\partial c))=[z,\bar y]=F(c)$ for $z=B_\ell(\vec b)$ and $y=B_{\mu(B)}(\vec\delta_1+2\vec\delta_{\mu(B)})$. Note that $z\not\in\mathcal H$ since $B_\ell$ is not a deleted edge and $|\vec b|=1$.

If $c = A_k(\vec a) \cup B_\ell(\vec b) \in \mathcal S'_4$ and $\vec a\ne\vec\delta_{|k|}$ then $\psi(s\circ\tilde r(\partial c))=[z,\bar x]=F(c)$ where $z=B_\ell(\vec b)$ and $x=A_k(\vec a+2\vec\delta_{|k|})$.

If $c = A_k(\vec a) \cup B_\ell(\vec b) \in \mathcal S'_3$ and $x=A_k(\vec a+(|\vec b|+1)\vec\delta_{\mu(A)})\not\in\mathcal H_4$, then $x\not\in\mathcal H$ and so $\psi(s\circ\tilde r(\partial c))=[\psi(z),x]$ where $z=B_\ell(\vec b)$. If $z\not\in\mathcal H$, $F(c)=[z,x]$. If $z\in\mathcal H$, then either $z\in\mathcal H_1$ or $z\in\mathcal H_2$ with $|\vec b|=2$ since $|\vec b|\le 2$. So $\psi(z)=\mathbf D(i\vec\delta_{\mu(D)},1,1)^{-1}\cdot \bar z$ where $D$ is the larger vertex in the candy containing the deleted edge $B_\ell$. Note that if $w$ is a critical 1-cell in $\mathbf D(i\vec\delta_{\mu(D)},1,1)$, then either $w=D_{\mu(D)}(\vec\delta_1)$ or $w=D_{\mu(D)}(\vec\delta_1+\vec\delta_{\mu(D)})$ and so $w$ is not in $\mathcal H$ and $\psi(w)=w$. Choose a critical 2-cell $c'\in\mathcal S'_3$ such that $\tilde r(\partial c')=[w,x]$. Then $F(c')=[w,x]$ and so $[\mathbf D(i\vec\delta_{\mu(D)},1,1),x]=1$ in $G$. Since $F(c)=[\bar z,x]$, $[\psi(z),x])=1$ in $G$.

If $c = A_k(\vec a) \cup B_\ell(\vec b) \in \mathcal S'_3$ and $x=A_k(\vec a+(|\vec b|+1)\vec\delta_{\mu(A)})\in\mathcal H_4$, then there is the smaller vertex $A'$ in the candy containing the vertex $A$ and the deleted edge $A'_m$ and so $\psi(s\circ\tilde r(\partial c))=[\psi(z),\bar x_3 \bar x^{-1} \bar x_2^{-1}]$ where $z=B_\ell(\vec b)$ and $x_i= A'_m(i\vec\delta_{|m|})$ for $i=2,3$. Let $c'=A'_m((1-|\vec b|)\vec\delta_{|m|})\cup z$ and $c''=A'_m((2-|\vec b|)\vec\delta_{|m|})\cup z$ be critical 2-cells in $\mathcal S'_2$. Then $F(c)=[\beta(z),\bar x]$, $F(c')=[\beta(z),\bar x_2]$ and $F(c'')=[\beta(z),\bar x_3]$. If $z\not\in\mathcal H$, $\psi(z)=\beta(z)=z$ and we are done. If $z\in\mathcal H$, $z\in\mathcal H_1$ and there is the larger vertex $D$ in the candy containing $B$. So $\psi(z)=w\bar z$ for $w=D_{\mu(D)}(\vec\delta_1)$. Since $c_1=A_k(\vec a) \cup w$, $c_2=A'_m((1-|\vec b|)\vec\delta_{|m|})\cup w$ and $c_3=A'_m((2-|\vec b|)\vec\delta_{|m|})\cup w$ are critical 2-cells in either $\mathcal S'_3$ or $\mathcal S'_2$, $F(c_1)=[w,\bar x]$ and $F(c_2)=[w,\bar
x_2]$ and $F(c_3)=[w,\bar x_3]$ are relators in $G$. Thus $\psi(s\circ\tilde r(\partial c))=1$ in $G$.

If $c = A_k(\vec a) \cup B_\ell(\vec b) \in \mathcal S'_2$ and the $|k|$-th coordinate of $\vec a$ is 0, then $\psi(s\circ\tilde r(\partial c))=[\psi(z),\bar x]$
for $x=A_k(\vec a+(|\vec b|+1)\vec\delta_{|k|})$ and $z=B_\ell(\vec b)$. If $\psi(z)=z$, $\psi(s\circ\tilde r(\partial c))=[z,\bar x]=F(c)$. If $\psi(z)\ne z$, either $z\in\mathcal H_1$ or $z\in\mathcal H_2$ with $|\vec b|=2$. So $\psi(z)=\mathbf D(i\vec\delta_{\mu(D)},1,1)^{-1}\cdot \bar z$ where $D$ is the larger vertex in the candy containing the deleted edge $B_\ell$ and $i$ is the $|\ell|$-th coordinate of $\vec b$ that is either 1 or 2. Let $w=D_{\mu(D)}(\vec d)$ be a critical 1-cell in $\mathbf D(i\vec\delta_{\mu(D)},1,1)$ and $c'=A_k(\vec a+(|\vec b|-|\vec d|)\vec\delta_{|k|})\cup w$ be a critical 2-cell in $\mathcal S'_2$. Since $F(c)=[\bar z,\bar x]$ and $F(c')=[w,\bar x]$ are relators in $G$, $[\psi(z),\bar x]=1$ in $G$.

If $c = A_k(\vec a) \cup B_\ell(\vec b) \in \mathcal S'_2$ and the $|k|$-th coordinate $j$ of $\vec a$ is positive, then $j$ is either 1 or 2 and $\psi(s\circ\tilde r(\partial c))=[\psi(z),\psi(\alpha)]$ where $x=A_k(\vec a+(|\vec b|+1)\vec\delta_{|k|})$, $C=B\wedge{\iota(A_k)}$, $\alpha=\mathbf{C}((|\vec b|+1)\vec\delta_{\mu(C)},1,1)\cdot x$, and $z=B_\ell(\vec b)$. Let $w_1=C_{\mu(C)}(\vec\delta_1+(|\vec b|+1)\vec\delta_{\mu(C)})$ and $w_2=C_{\mu(C)}(\vec\delta_1+(|\vec b|+2)\vec\delta_{\mu(C)})$. If $j=1$, $\psi(\alpha)=\psi(w_1^{-1})\cdot\bar x$. If $j=2$, $\psi(\alpha)=\psi(w_1^{-1}w_2^{-1})\cdot\bar x$. Since $c'=C_{\mu(C)}(\vec\delta_1)\cup z$ and $c''=C_{\mu(C)}(\vec\delta_1+\vec\delta_{\mu(C)})\cup z$ are critical 2-cells in $\mathcal S'_3$, $\psi(\tilde r(\partial c'))=[\psi(z),\psi(w_1)]=1$ and $\psi(\tilde r(\partial c''))=[\psi(z),\psi(w_2)]=1$ in $G$. Using the same argument as above, we have $[\psi(z),\bar x]=1$ in $G$. Thus $[\psi(z),\psi(\alpha)]=1$ in $G$.
This completes the proof.
\end{proof}

\section{Proof of Theorem ~\ref{thm:onlyif}}\label{s:five}
We showed in \S2.2 that $H^*(B_4N_k)$ for $k=2,3,4$ has a non-trivial triple Massey product by using their simple-commutator-related presentations and Lemma~\ref{lem:3}. We will use similar arguments. To apply Lemma~\ref{lem:3}, we need to show that chosen sets of generators satisfy the cup zero condition.

Assume that $\Gamma$ is a cactus graph. We now know that $H^1(B_4\Gamma)$ has a simple-commutator-related presentation in Lemma~\ref{scrg}. The simple-commutator relations are given in Lemma~\ref{rel1} and in Lemma~\ref{lem:comm}. We continue to use the notations for subsets $\mathcal S_0, \mathcal S_1,\cdots,\mathcal S_4\subset C_2$ and $\mathcal T\subset C_1$.

\begin{lem}\label{cupcond}
Assume that $c =A_k(\vec a)\cup B_\ell(\vec b)\in \mathcal C_2 -\mathcal{S}_0$ with $A<B$ and $s\circ \tilde r (\partial c) = [u, v]$ and $x=P_p(\vec p), y=Q_q(\vec q) \in \mathcal C_1 - \mathcal{T}$ with $P \leq Q$.
\begin{enumerate}
 \item If $\varepsilon_x(u)\varepsilon_y(v)- \varepsilon_y(u)\varepsilon_x(v) \neq 0$ and $c\not\in\mathcal{S}_2$, then $P = A$, $Q=B$, $\varepsilon_x(u)\varepsilon_y(v)=0$, and $\varepsilon_y(u)\varepsilon_x(v)\neq0$.
 \item If $\varepsilon_x(u)\varepsilon_y(v)- \varepsilon_y(u)\varepsilon_x(v) \neq 0$ and $c\in\mathcal{S}_2$, then $P=A \text{ or }B\wedge \iota(A_k)$, $Q=B$, $\varepsilon_x(u)\varepsilon_y(v)=0$, and $\varepsilon_y(u)\varepsilon_x(v)\neq0$.
 \item If $P = Q$, then $\varepsilon_x(u)\varepsilon_y(v)- \varepsilon_y(u)\varepsilon_x(v) = 0$ for any $\vec p$ and $\vec q$.
\end{enumerate}
\end{lem}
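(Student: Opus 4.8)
Write $\varepsilon_g=\varepsilon\partial_g$ for the exponent sum of a generator $g$, note that the quantity in the lemma is $\varepsilon_{xy}([u,v])=\varepsilon_x(u)\varepsilon_y(v)-\varepsilon_x(v)\varepsilon_y(u)$, and for a word $w$ over $\mathcal C_1-\mathcal T$ let $\mathrm{supp}(w)$ denote the set of vertices $V$ such that some critical 1-cell based at $V$ has nonzero exponent sum in $w$. The plan is to establish the support bounds (S1) $\mathrm{supp}(u)\subseteq\{B\}$ and (S2) $\mathrm{supp}(v)\subseteq\{A\}$ if $c\notin\mathcal S_2$, while $\mathrm{supp}(v)\subseteq\{A,C\}$ with $C=B\wedge\iota(A_k)$ if $c\in\mathcal S_2$, and then to read off the three conclusions. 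Granting (S1) and (S2): since $A<B$ (and $A<C<B$ in the $\mathcal S_2$ case) whereas $P\le Q$, one cannot have $\varepsilon_x(u)\varepsilon_y(v)\ne0$, as that would force $P=B$ and $Q\in\{A\}$ (resp.\ $Q\in\{A,C\}$), contradicting $P\le Q$; hence $\varepsilon_x(u)\varepsilon_y(v)=0$, the bracket equals $-\varepsilon_y(u)\varepsilon_x(v)$, and its nonvanishing forces $Q=B$ together with $P=A$ (resp.\ $P\in\{A,C\}$), which gives (1) and (2). If instead $P=Q$, then $P$ lies in at most one of $\mathrm{supp}(u),\mathrm{supp}(v)$, so one of the pairs $\varepsilon_x(u),\varepsilon_y(u)$ and $\varepsilon_x(v),\varepsilon_y(v)$ is identically zero and the bracket vanishes, which gives (3).

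The crux is the following claim, which I would prove by induction on $|\vec d|$: if every letter of a word $w$ is a critical 1-cell based at a single vertex $V$, then $\mathrm{supp}(s(w))\subseteq\{V\}$. Since $s$ fixes non-targets, it suffices to treat $w=D_m(\vec d)$ a single target, with $V=D$. By \S\ref{ss41:scrg}, $\mathbf R(D_m(\vec d))=\gamma_1\cdot e\cdot D_m(\vec d-\vec\delta_1)\cdot e^{-1}\cdot\gamma_2^{-1}$, where $\gamma_1,\gamma_2$ are words of the form $\mathbf D(\,\cdot\,,1,1)$ whose letters are non-target critical 1-cells based at $D$, and $e$ is a deleted edge based at the smaller vertex of the cycle containing $D$. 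Hence $s(D_m(\vec d))=\gamma_1\cdot s(e)\cdot s(D_m(\vec d-\vec\delta_1))\cdot s(e)^{-1}\cdot\gamma_2^{-1}$; the conjugating factor $s(e)^{\pm1}$ contributes $0$ to every exponent sum, $\gamma_1,\gamma_2$ contribute only at $D$, and $\mathrm{supp}(s(D_m(\vec d-\vec\delta_1)))\subseteq\{D\}$ by the inductive hypothesis, so $\mathrm{supp}(s(D_m(\vec d)))\subseteq\{D\}$. Taking $w=B_\ell(\vec b)$ gives (S1).

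For (S2) I would run through the forms of the second entry of the commutator produced by Lemma~\ref{rel1}, and by Lemma~\ref{lem:comm} when $c\in\mathcal S_4$; recall that $v$ is $s$ applied to that second entry. In cases (3) and (4) the second entry has all its letters based at $A$ — the factors $\mathbf A(\vec a,\ell,m)$ are products of 1-cells based at $A$ by \S\ref{ss32:P1}, and the remaining factors are deleted edges $A_k(\,\cdot\,)$ based at $A$ — so the claim yields $\mathrm{supp}(v)\subseteq\{A\}$. In case (1) the second entry is a conjugate of the single 1-cell $A_k(\vec a)$ by the word $\mathbf X(\,\cdot\,)$ based at $A\wedge B$, hence $v$ is a conjugate of $s(A_k(\vec a))$ and $\mathrm{supp}(v)=\mathrm{supp}(s(A_k(\vec a)))\subseteq\{A\}$ by the claim. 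In case (2) the second entry is $\omega^{-1}(\gamma\cdot A_k(\,\cdot\,))\omega$ with conjugator $\omega=\mathbf A(\,\cdot\,)$ based at $A$, with $\gamma=\mathbf C((|\vec b|+1)\vec\delta_{g(C,B)},1,1)$ a word based at $C=B\wedge\iota(A_k)$ and $A_k(\,\cdot\,)$ based at $A$; the claim then gives $\mathrm{supp}(v)\subseteq\{A,C\}$.

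The real obstacle is the inductive claim, equivalently (S1): one must verify that unwinding the recursion built into $s$ never leaves behind, at a vertex other than the intended one, a letter that fails to cancel — i.e.\ that at each level the part of $\mathbf R$ based elsewhere sits inside a conjugation bracket and therefore drops out of every exponent sum. This is exactly where the braid index $n=4$ bounds the depth of the recursion (by $n-1=3$) and where the structural facts about $\mathbf R$ and about the words $\mathbf A(\,\cdot\,,1,1)$, $\mathbf C(\,\cdot\,,1,1)$ established in \S\ref{s:three}--\S\ref{s:four} enter. The case analysis for (S2) and the concluding bookkeeping are then routine.
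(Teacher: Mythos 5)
Your proposal is correct and takes essentially the same route as the paper: the paper's proof also rests on the observation that any critical 1-cell with nonzero exponent sum in $s$ of a target $D_d(\vec d)$ is based at $D$, so that $u$ is supported at $B$ while $v$ is supported at $A$ (or at $A$ and $B\wedge\iota(A_k)$ when $c\in\mathcal S_2$) up to conjugation, and then concludes (1)--(3) from $P\le Q$. Your induction on the recursion in $\mathbf R$ just spells out what the paper asserts in a one-line ``Note,'' so the two arguments coincide in substance.
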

\begin{proof}
If $\varepsilon_x(u)\varepsilon_y(v)- \varepsilon_y(u)\varepsilon_x(v) \neq 0$, then $\varepsilon_x(u)\neq 0\neq\varepsilon_y(v)$ or $\varepsilon_y(u)\neq 0\neq\varepsilon_x(v)$. Note that for any $D_d(\vec d)\in\mathcal T$, any critical 1-cell whose exponential sum in $s(D_i(\vec d))$ is not zero is based at $D$.

Assume $c\not\in\mathcal{S}_2$ for (1). If $\varepsilon_x(u)\neq 0$, $P=B$ since $u=B_\ell(\vec b)$. And if $\varepsilon_y(v)\neq 0$, $Q=A$ since $v$ is a word of critical 1-cells based at $A$ up to conjugation. Since $P\le Q$, this is impossible. Thus we must have $\varepsilon_y(u)\neq 0$ and $\varepsilon_x(v)\neq 0$ and so $P=A$ and $Q=B$.

Assume $c\in\mathcal{S}_2$ for (2). If $\varepsilon_x(u)\neq 0$, $P=B$. And if $\varepsilon_y(v)\neq 0$, $Q=A$ or $Q=B\wedge\iota(A_k)$ since $v$ is a word of critical 1-cells based at $A$ or $B\wedge\iota(A_k)$. This is impossible since $A$ or $B\wedge\iota(A_k)$ are smaller than $B$. Thus we must have $\varepsilon_y(u)\neq 0$ and $\varepsilon_x(v)\neq 0$ and so $P=A\ \text{or}\ B\wedge\iota(A_k)$ and $Q=B$.

Finally (3) is a consequence of (1) and (2).
\end{proof}

\begin{lem}\label{cupcond4}
Let $y=Q_q(\vec q) \in \mathcal C_1 - \mathcal{T}$ and assume
\begin{itemize}
\item[(a)] There is a cycle containing $P$ and $Q$ where $P$ is the smallest vertex of degree $\geq 3$ on the cycle.
\item[(b)] There is no cycle containing $P$ and a vertex of degree $\ge3$ and smaller than $P$.
\end{itemize}
Let $P_p$ be the deleted edge of the cycle containing $P$ and $Q$.
For $$X = \lbrace P_p(\vec p)\:|\:(|\vec q|+1)\le \mbox{ the }|p|\mbox{-th coordinate of }\vec p \rbrace\quad\text{and}\quad Y = \lbrace y \rbrace,$$ the pair $(X,Y)$ satisfies the cup zero conditions.
\end{lem}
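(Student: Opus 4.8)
The plan is to verify directly, for every relator $r=[u,v]=s\circ\tilde r(\partial c)$ with $c=A_k(\vec a)\cup B_\ell(\vec b)\in\mathcal C_2-\mathcal S_0$ and $A<B$, that
$$\sum_{x\in X}\bigl(\varepsilon_x(u)\varepsilon_y(v)-\varepsilon_x(v)\varepsilon_y(u)\bigr)=0 .$$
First I would note $X\subset\mathcal C_1-\mathcal T$: each cell of $X$ is based at $P$, and by hypothesis (b) $P$ is the smallest vertex of degree $\ge3$ on every cycle through it, so no cell based at $P$ is a target. Write $Q$ for the base vertex of $y$. If $Q=P$, then Lemma~\ref{cupcond}(3) annihilates every summand, so assume $P<Q$. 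By Lemma~\ref{cupcond}, a summand is nonzero only if $Q=B$, and for every relator with $Q=B$ the word $v$ is assembled from critical cells based at vertices strictly below $B$ --- at $A$, at $A\wedge B$, or (in the $\mathcal S_2$ case) at $B\wedge\iota(A_k)$ --- so $\varepsilon_y(v)=0$ there. Consequently each summand equals $-\varepsilon_x(v)\,\varepsilon_y(u)$, and it remains to prove $\sum_{x\in X}\varepsilon_x(v)=0$ for every relator with $Q=B$.

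I then treat the four types $\mathcal S_1,\mathcal S_2,\mathcal S_3,\mathcal S_4$ into which $\mathcal C_2-\mathcal S_0$ partitions. If $c\in\mathcal S_1$, that is, $A\wedge B<A$, a nonzero summand would (by Lemma~\ref{cupcond}(1)) force $P=A$ and $Q=B$, hence $A$ and $B$ on the $P_p$-cycle; but property (T4) requires every path in $\Gamma$ between $A$ and $B$ to run through $A\wedge B<A$, which the arc of that cycle does not, so no summand is nonzero. If $c\in\mathcal S_2$ or $c\in\mathcal S_3$, the only deleted edge occurring in $v$ is $A_k$, since the bold words $\omega,\omega_1,\omega_2,\gamma$ are products of critical $1$-cells of positive branch index; moreover $A_k\ne P_p$, for $A_k=P_p$ would put $Q=B$ on the cycle of $A_k$ and force $B\wedge\iota(A_k)=B$, contradicting $B\wedge\iota(A_k)<B$ in $\mathcal S_2$ and $B\wedge\iota(A_k)=A<B$ in $\mathcal S_3$. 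Since every cell of $X$ carries $P_p\ne A_k$, we get $\varepsilon_x(v)=0$ for all $x\in X$.

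The case $c\in\mathcal S_4$ is the crux. Here $A$ and $B$ lie on a common cycle, and in a cactus graph two distinct vertices lie on at most one common cycle; as $A=P$ and $B=Q$ also share the $P_p$-cycle, that cycle has deleted edge $A_k=P_p$ with $g(A,B)=|k|=|p|$. After the substitution of Lemma~\ref{lem:comm}, $v=\alpha_1^{-1}\omega_1^{-1}\alpha_2\omega_2$ with $\alpha_1=P_p((|\vec b|+1)\vec\delta_{|p|})$, $\alpha_2=P_p(\vec a+(|\vec b|+1)\vec\delta_{|p|})$ and $\omega_1,\omega_2$ free of deleted edges, so $\varepsilon_x(v)=-[\alpha_1=x]+[\alpha_2=x]$ for $x\in X$. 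As $c\in\mathcal S_4$ forces $\vec a\ne\vec 0$, the cells $\alpha_1$ and $\alpha_2$ are distinct. They both lie in $X$: $\varepsilon_y(u)\ne0$ makes $y=Q_q(\vec q)$ --- a cell based at $Q=B$, not at $P$ --- occur among the $B$-based cells of $u=s(B_\ell(\vec b))$, and each such cell carries at most $|\vec b|$ blocked vertices, whence $|\vec q|\le|\vec b|$ and the $|p|$-th coordinates of the vectors defining $\alpha_1$ and $\alpha_2$ are at least $|\vec b|+1\ge|\vec q|+1$. Therefore $\sum_{x\in X}\varepsilon_x(v)=-1+1=0$, finishing the verification. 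The main obstacle is precisely this $\mathcal S_4$ computation: identifying $A_k$ with $P_p$ from the cactus structure, and tracking across the target substitution $s$ the inequality $|\vec q|\le|\vec b|$ that places both surviving deleted-edge cells $\alpha_1,\alpha_2$ into $X$ with cancelling signs.
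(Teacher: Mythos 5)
Your argument is correct and follows essentially the same route as the paper's proof: reduce via Lemma~\ref{cupcond} to the case $c\in\mathcal S_4$ with $P=A$, $Q=B$, use the commutator form of Lemma~\ref{lem:comm}, identify $A_k=P_p$ from the uniqueness of the cycle through $P$ and $Q$, deduce $|\vec q|\le|\vec b|$ from the shape of $\mathbf R(B_\ell(\vec b))$, and cancel the opposite-signed occurrences of $\alpha_1,\alpha_2\in X$. Your explicit exclusion of $\mathcal S_1,\mathcal S_2,\mathcal S_3$ (via (T4) and $A_k\ne P_p$) merely spells out what the paper compresses into ``considering the position of $P$ and $Q$''; the only point to watch is that the claim ``the only deleted edge in $v$ is $A_k$'' should be checked after applying $s$, which holds because cells based at $P$ are not targets by (b) and replacements of targets based at other vertices cannot introduce $P_p$-cells, two cycles of a cactus sharing at most one vertex.
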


\begin{proof}
For the case $|\vec q| = 3$ so that $X$ is the empty set, the statement is vacuously true. So assume that $|\vec q| \leq 2$.

Let $[u,v]=s\circ\tilde r(\partial c)$ for $c=A_k(\vec a)\cup B_\ell(\vec b)\in \mathcal C_2-\mathcal{S}_0$ and $x\in X$. For nontrivial contributions in $\sum_{x \in X} \varepsilon_x (u) \varepsilon_y (v) - \varepsilon_x (v) \varepsilon_y (u)$,
we assume $\varepsilon_x(u)\varepsilon_y(v)- \varepsilon_y(u)\varepsilon_x(v) \neq 0$.
Considering the position of $P$ and $Q$ and Lemma~\ref{cupcond}, we must have $c\in \mathcal S_4$, $P=A$ and $Q=B$.
Then $$[u,v]=[s(B_\ell(\vec b)),s(\alpha_1^{-1}\omega_1^{-1}\alpha_2\omega_2)]$$
where $\alpha_1=A_k((|\vec b|+1)\vec\delta_{|k|})$, $\omega_1=\mathbf{A} (\vec{a}, g(A,B), |\vec b|+2)$, $\alpha_2=A_k(\vec a+(|\vec b|+1)\vec\delta_{|k|})$, and $\omega_2=\mathbf{A} (\vec{a}, g(A,B) ,|\vec b|+1))$.
Since $A_k$ is a deleted edge in the cycle containing $P=A$ and $B$, $p=k$. Thus $x=\alpha_1$ or $x=\alpha_2$. Since $|\vec d| \leq |\vec b|$ for each $D_d(\vec d) \in \mathbf R(B_\ell(\vec b))$ and $\epsilon_yu\ne0$, we have $|\vec q|=|\vec d|$ for some $D_d(\vec d)$ and so $|\vec q|\le|\vec b|$. Thus both $\alpha_1$, $\alpha_2$ are in $X$ and critical 1-cells in $\omega_1$ and $\omega_2$ are not in $X$. Since $\alpha_1$ and $\alpha_2$ have opposite exponents in $v$, their contributions cancel each other and so the pair $(X,Y)$ satisfies the cup zero conditions.
\end{proof}

The following lemma implies Theorem ~\ref{thm:onlyif}.

\begin{lem}\label{lem:oif}
Let $\Gamma$ be a cactus graph. If $\Gamma$ contains $N_2$, $N_3$ or $N_4$  then $H^*(B_4\Gamma)$ has a non-trivial Massey product and so $B_4\Gamma$ is not a right-angled Artin group.
\end{lem}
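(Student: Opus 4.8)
The plan is to treat the three cases $N_2$, $N_3$, $N_4$ separately and, in each, to exhibit inside the simple-commutator-related presentation $B_4\Gamma=\langle\mathcal C_1-\mathcal T\mid s\circ\tilde r(\partial c),\ c\in\mathcal C_2-\mathcal S_0\rangle$ of Lemma~\ref{scrg} a set of generators and relators matching one of the three templates of Lemma~\ref{lem:3}. The explicit presentations of $B_4N_i$ in Lemma~\ref{lem:4} serve as local models: in a general cactus graph the only new features are that the conjugating words appearing in the relators are longer and that the presentation carries extra generators and relators coming from the other building blocks of $\Gamma$, and both features are harmless for the relevant template.

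First I would fix, for the given occurrence of $N_i$, a sufficiently subdivided copy of $N_i$ in $\Gamma$ and choose the maximal tree $T$ and the vertex numbering as in \S3 so that this copy is positioned as in the corresponding model: for $N_2$ a candy meeting another cycle, for $N_3$ a cycle carrying three vertices of degree $\ge 3$, and for $N_4$ an essential vertex whose branches themselves branch. I would then read off the relevant critical $1$-cells --- edges pushed into the extra branches together with the deleted edges of the cycles involved --- and the critical $2$-cells pairing them. Applying Lemma~\ref{rel1} (or Lemma~\ref{rel2}) together with Lemma~\ref{lem:comm} to those $2$-cells produces relators of the shape $[y,\omega_1 x\omega_1^{-1}]$, $[z,\omega_2 x\omega_2^{-1}]$, $[z,e\omega_3 y\omega_3^{-1}e^{-1}]$ for $N_4$ (template (3)), of the shape $[x,a^{-1}b]$, $[y,a^{-1}b]$, $[x,baya^{-1}b^{-1}]$ for $N_3$ (template (2)), and of the shape $[x,a^{-1}b]$, $[x,a^{-1}c]$, $[zaxa^{-1},b^{-1}c]$, $[z,b^{-1}c]$ for $N_2$ (template (1)); in each case the conjugating words $\omega_j$ are words over critical $1$-cells based at vertices lying outside the chosen subsets $X\cup Y\cup Z$, as the templates require.

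Next I would verify the cup-zero conditions for the pairs $(X,Y)$ and $(Y,Z)$, i.e.\ that $\sum_{x\in X,\,y\in Y}\bigl(\varepsilon_x(u)\varepsilon_y(v)-\varepsilon_x(v)\varepsilon_y(u)\bigr)=0$ on every relator $[u,v]=s\circ\tilde r(\partial c)$ (and similarly for $(Y,Z)$). By Lemma~\ref{cupcond}, a $2$-cell $c=A_k(\vec a)\cup B_\ell(\vec b)$ with $A<B$ can contribute only when one generator of the pair is based at $A$ and the other at $B$ (with the extra possibility $B\wedge\iota(A_k)$ when $c\in\mathcal S_2$), so only finitely many building blocks of $\Gamma$ are ever involved; the genuinely delicate contributions come from the cells $c\in\mathcal S_4$, whose boundary word $\alpha_1^{-1}\omega_1^{-1}\alpha_2\omega_2$ carries two $1$-cells $\alpha_1,\alpha_2$ based at $A$ and lying on the same deleted edge. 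Choosing $X$ (and, for $N_3$ and $N_4$, $Z$) to be a set of $1$-cells on a single deleted edge whose last coordinate satisfies the bound of Lemma~\ref{cupcond4} makes the $\alpha_1$- and $\alpha_2$-contributions of such relators cancel, and a short case check over the remaining relators gives the vanishing.

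Finally, with the cup-zero conditions in hand Lemma~\ref{lem:3} applies in the appropriate case and produces a representative $\rho$ of the triple Massey product $\langle\alpha,\beta,\gamma\rangle$, with $\alpha=\sum_{x\in X}x^*$, $\beta=\sum_{y\in Y}y^*$, $\gamma=\sum_{z\in Z}z^*$, that does not lie in $\alpha\cup H^1(B_4\Gamma)+H^1(B_4\Gamma)\cup\gamma$; hence $H^*(B_4\Gamma)$ has a nontrivial triple Massey product, and since Massey products of right-angled Artin groups vanish, $B_4\Gamma$ is not one. I expect the main obstacle to be the identification step: fitting the relators of a general cactus graph into the rigid templates of Lemma~\ref{lem:3} --- in particular controlling the conjugating words $\omega_j$ and checking that the extra relators (the analogue of $r_4$ in the $N_2$ model, together with all relators from unrelated building blocks) neither spoil the cup-zero conditions nor alter the membership computation for $\rho$; the cup-zero part of this difficulty is precisely what Lemmas~\ref{cupcond} and~\ref{cupcond4} are designed to absorb.
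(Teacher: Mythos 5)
Your proposal is correct and takes essentially the same route as the paper: pick out critical 1- and 2-cells whose (rewritten) boundary relators realize the three templates of Lemma~\ref{lem:3}, verify the cup-zero conditions with Lemmas~\ref{cupcond} and~\ref{cupcond4}, and conclude via the non-vanishing triple Massey product. The only refinements the paper adds are the ordering of the cases (handle $N_3$ only when $N_2$ is absent and $N_4$ only when both $N_2$ and $N_3$ are absent, which is what gives the clean local structure ($*$) of Figure~\ref{fig:sub}), and the fact that in the $N_4$ case the cup-zero check for the singleton pairs $(\{x\},\{y\})$, $(\{z\},\{y\})$, $(\{w\},\{y\})$ is not an instance of Lemma~\ref{cupcond4} but a separate contradiction argument using Lemma~\ref{cupcond}, ($*$), and vector-length estimates.
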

\begin{proof}
Assume that $\Gamma$ contains $N_2$. Then there are vertices $A, B$ of degree $\geq 3$ such that $\Gamma$ has a cycle $O$ containing $A, B$, and another cycle $O'$ such that $O \cap O' = \lbrace B \rbrace$. We can choose such $A, B$ and the base vertex so that $A<B$ and there is no cycle containing $A$ and smaller vertices of degree $\geq 3$. Denote deleted edges of $O, O'$ by $A_k$ and $B_\ell$, respectively. Let $x_i, y, z$ be critical 1-cells given by $x_i = A_k (i\vec\delta_{|k|})$ for $i = 1, 2, 3$, $y = B_\ell$, and $z = B_{|\ell|} (\vec\delta_1)$. And let $X = \lbrace y \rbrace$, $Y = \lbrace A_k(\vec a)\:|\:1\le \mbox{ the }|k|\mbox{-th coordinate of }\vec a \rbrace $, and $Z = \lbrace x_2 \rbrace$. Then the pair $(Y,Z)$ satisfies the cup zero condition by Lemma~\ref{cupcond}(3) and $(X,Y)$ satisfies the cup zero condition by lemma~\ref{cupcond4}. Critical 2-cells $A_k(\vec\delta_{|k|}) \cup B_\ell$, $A_k(2\vec\delta_{|k|}) \cup B_\ell$, $A_k(\vec\delta_{|k|}) \cup B_\ell(\vec\delta_1)$, and $A_k(\vec\delta_{|k|}) \cup B_{|\ell|}(\vec\delta_1)$ gives relators $r_1 = [y, x_1^{-1} x_2]$, $r_2 = [y, x_1^{-1} x_3]$, $r_3 = [z x_1 y x_1 ^{-1}, x_2 ^{-1} x_3]$, and $r_4 = [z, x_2 ^{-1} x_3 ]$, respectively. By Lemma~\ref{lem:3}(1), $H^* (B_4 \Gamma)$ has a non-trivial Massey product.

Next we assume that $\Gamma$ does not contain $N_2$ but contains $N_3$. Then there is a cycle containing vertices $A, B, C$ of degree $\geq 3$ and there is no other cycle containing any of them. Assume that $0 < A< B < C$. By our choices of a maximal tree and an order, $A_k$ is a deleted edge for some $k<0$. Let $x_i$, $y$, $z$ be in $\mathcal C_1 - \mathcal{T}$ such that $x_i = A_k ( (i+1) \vec\delta_{|k|})$ for $i=0,1,2$, $y = B_{\mu(B)} (\vec\delta_1)$, and $z = C_{\mu(C)} (\vec\delta_1)$. Now let $X = Z = \lbrace x_1, y, z  \rbrace$ and $Y = \lbrace A_k(\vec a)\:|\:2\le \mbox{ the }|k|\mbox{-th coordinate of }\vec a \rbrace $. Again pairs $(X,Y)$ and $(Y,Z)$ satisfy the cup zero condition by Lemma~\ref{cupcond}(3) and Lemma~\ref{cupcond4}. Critical 2-cells $A_k(\vec\delta_{|k|})\cup B_{\mu(B)}(\vec\delta_1)$, $A_k(\vec\delta_{|k|})\cup C_{\mu(C)}(\vec\delta_1)$, and $B_{\mu(B)}(\vec\delta_1) \cup C_{\mu(C)}(\vec\delta_1)$ gives relators $r_1 = [y, x_1 ^{-1} x_2]$, $r_2 = [z, x_1^{-1}x_2]$, and $r_3 = [z, x_2 x_1 y x_1 ^{-1} x_2 ^{-1}]$, respectively. By Lemma~\ref{lem:3}(2), $H^* (B_4 \Gamma)$ has a non-trivial Massey product.

Finally we assume that $\Gamma$ contains neither $N_2$ nor $N_3$ but contains $N_4$. Then there are four vertices $A, B, C$ and $D$ of degree $\ge 3$ such that the minimal induced subgraph of $\Gamma$ containing $A, B, C$, and $D$ after ignoring vertices of degree 2 is given as one of graphs in Figure~\ref{fig:sub}. We will refer to this property as ($*$).
By choosing a suitable planar embedding of our maximal tree and the base vertex, we assume that $A<B<C<D$ and $A \wedge B = A$, $B \wedge C = B \wedge D = C \wedge D = B$, and $g(A,B)=\mu(A)$. By our choices of a maximal tree and an order on vertices, no deleted edges are incident to $A$ if there is only one cycle containing $A$. Thus $\mu(A)\ge 2$. Let $m=g(A,B)$, $n=g(B,C)$, and $\ell=g(B,D)$. Then $m>1$.
\begin{figure}[ht]
\centering
\includegraphics[height=2.4cm]{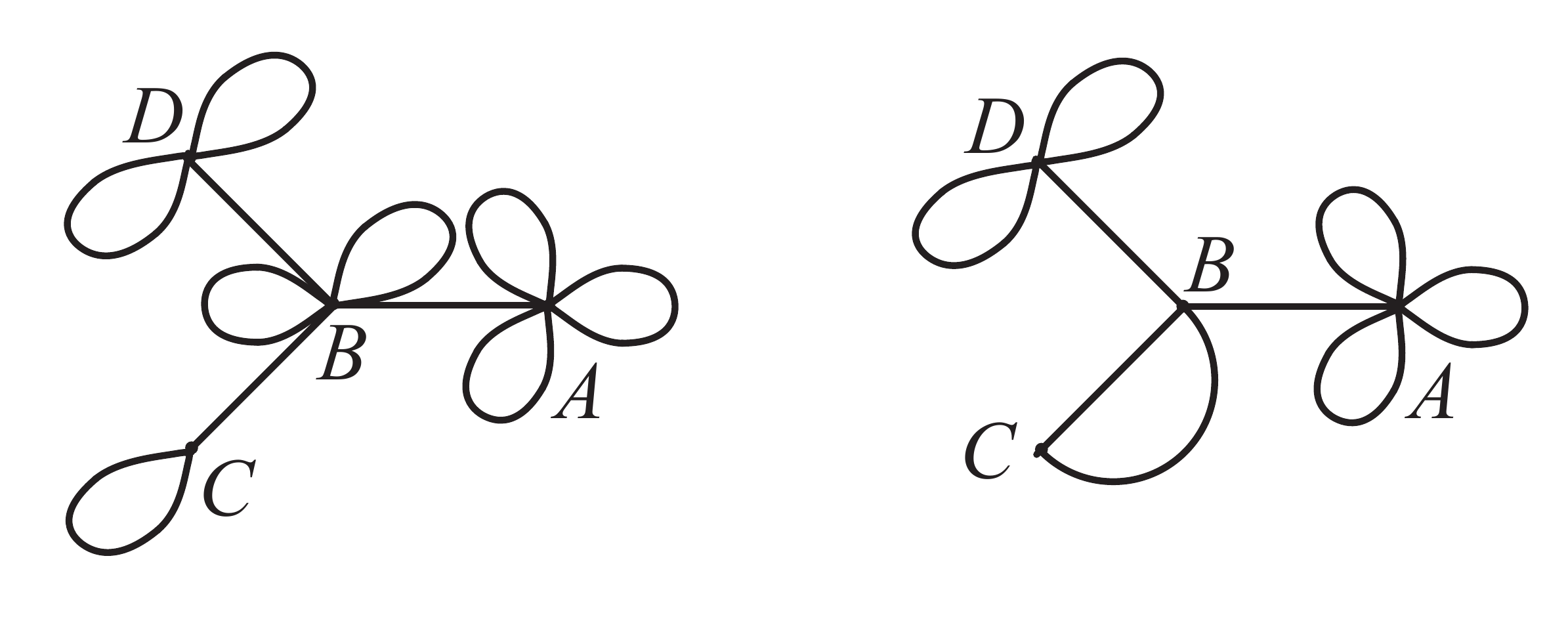}
\caption{Subgraph induced by $A,B,C,D$}
\label{fig:sub}
\end{figure}

Define critical 1-cells $x'$, $y$, $z$, and $w$ by $x' = A_m (\vec{a})$, $y = B_\ell (2\vec{\delta_n} + \vec{\delta_\ell})$, $z=C_c(\vec{c})$, and $w = D_d(\vec{d})$ with $|\vec a|=|\vec c|=|\vec d|=1$. Then by the characterization of a target, $x'$, $y$, $z$, and $w$ are not targets. Then define a critical 1-cell $x$ by $x=A_m(\vec a+(|\vec c|+1)\vec\delta_m)$. Then $A_m(\vec a+(|\vec c|+1)\vec\delta_m-\vec\delta_1)$ is not a critical 1-cell . Thus $x$ is not a target.

By Lemma~\ref{rel1}, critical 2-cells $x' \cup z$, $x' \cup w$, and $z \cup w$, gives relators $r_1 = [z, \omega_1 ^{-1} x \omega_1]$, $r_2 = [w, \omega_1^{-1} x \omega_1]$, and $r_3 = [w, y^{-1} \omega_3^{-1} z \omega_3 y]$, respectively where $\omega_i$'s are words over $\mathcal C_1 - \mathcal{T}$ that do not contain $x, y, z, w$. Note that $x,y,z,w$ are never replaced under $s$. Let $X = Z = \lbrace x, z, w \rbrace$ and $Y = \lbrace y \rbrace$. If we show that pairs $(\{x\}, \{y\})$, $(\{z\}, \{y\})$, and $(\{w\}, \{y\})$ satisfy the cup zero condition, we are done by Lemma~\ref{lem:3}(3).

Let $c = P_p(\vec p) \cup Q_q(\vec q)$ be an arbitrary critical 2-cell in $\mathcal C_2 - \mathcal{S}_0$ such that $P < Q$. And let $[u,v]=\tilde r (\partial c)$. We will derive a contradiction in each case that a pair does not satisfy the cup zero condition for $[s(u),s(v)]$.

First consider the pair $(\{x\}, \{y\})$. An analysis using Lemma~\ref{cupcond} and ($*$) leaves two possibilities: (i) $c\in\mathcal S_3$, $A=P$ and $B=Q$ or (ii) $c\in\mathcal S_2$, $A=Q\wedge\iota(P)$ and $B=Q$. Recall the expressions for $[u,v]$ from Lemma~\ref{rel1}. For (i), $y$ is in $s(u)=s(Q_q(\vec q))$ and so $|\vec q|\ge3$ since $|\vec d|\le|\vec q|$ for any $D_d(\vec d)$ in the replacement of $Q_q(\vec q)$ and $|2\vec{\delta_n} + \vec{\delta_\ell}|=3$. But $v$ contains the critical 1-cell $P_p(\vec p+(|\vec q|+1)\vec\delta_{g(P,Q)}$ that is not a target and so $|\vec q|+1\le|\vec p+(|\vec q|+1)\vec\delta_{g(P,Q)}|\le3$. This is a contradiction. For (ii), we also have $|\vec q|\ge3$ since $y$ is in
$s(u)=s(Q_q(\vec q))$ and this is contradiction by the same reason.

For the pair $(\{z\}, \{y\})$, a similar analysis leaves one possibility that $c\in\mathcal S_3\cup\mathcal S_4$, $B=P$ and $C=Q$. This is also a contradiction by a similar argument using the expression of $[u,v]$ in Lemma~\ref{lem:comm}.

For the pair $(\{w\}, \{y\})$, we have one possibility that $c\in\mathcal S_3$, $B=P$ and $D=Q$. This is also a contradiction by a similar argument.
\end{proof}

\bibliographystyle{amsplain}

\end{document}